\newtheorem{teore}{Theorem}[section] 
\newtheorem{theorem}[teore]{Theorem} 
\newtheorem{defn}[teore]{Definition}
\newtheorem{lemat}[teore]{Lemma}
\newtheorem{coro}[teore]{Corollary}
\theoremstyle{definition}
\newtheorem{claim}{Claim}
\DeclareMathOperator*{\Spec}{Spec}
\DeclareMathOperator*{\DSpec}{DSpec}
\DeclareMathOperator*{\tr}{tr}
\newcommand{\um}{
\begin{tikzpicture}[scale=.7,auto=left,every node/.style={circle,scale=0.5}]
%[scale=1,auto=left,every node/.style={circle,scale=0.9}]

\path ( 0,-0.7)node[shape=circle,draw,fill=black] (10) {}
      (1,-0.7)node[shape=circle,draw,fill=black]  (11) {}
      ( 0,0)node[shape=circle,draw,fill=black]    (8) {}
      (1,0)node[shape=circle,draw,fill=black]     (9) {}
      (0,.7)node[shape=circle,draw,fill=black]    (4) {}
      (1,.7)node[shape=circle,draw,fill=black]    (5) {}
      (-0.6,.7)node[shape=circle,draw,fill=black] (6) {}
      (1.6,.7)node[shape=circle,draw,fill=black]  (7) {}
      (.5,1.3)node[shape=rectangle,label=left:\Large{$v_{0}$},draw,fill=red] (0) {}
      (-0.1,1.65)node[shape=circle,draw,fill=black]    (1) {}
      (1,1.65)node[shape=circle,draw,fill=black] (2) {}
      (1.6,2)node[shape=circle,draw,fill=black]  (3) {};

      \draw[-](0)--(1);
      \draw[-](0)--(2);
      \draw[-](2)--(3);
      \draw[-](0)--(4);
      \draw[-](0)--(5);
      \draw[-](4)--(6);
      \draw[-](5)--(7);
      \draw[-](4)--(8);
      \draw[-](5)--(9);
      \draw[-](8)--(10);
      \draw[-](9)--(11);
     
\end{tikzpicture}
}
\newcommand{\dois}{
\begin{tikzpicture}[scale=.7,auto=left,every node/.style={circle,scale=0.5}]
%[scale=1,auto=left,every node/.style={circle,scale=0.9}]

\path ( 0,-0.7)node[shape=circle,draw,fill=black] (10) {}
      (1,-0.7)node[shape=circle,draw,fill=black]  (11) {}
      ( 0,0)node[shape=circle,draw,fill=black]    (8) {}
      (1,0)node[shape=circle,draw,fill=black]     (9) {}
      (0,.7)node[shape=circle,draw,fill=black]    (4) {}
      (1,.7)node[shape=circle,draw,fill=black]    (5) {}
      % (-0.6,.7)node[shape=circle,draw,fill=black] (6) {}
      % (1.6,.7)node[shape=circle,draw,fill=black]  (7) {}
      (.5,1.3)node[shape=rectangle,label=left:\Large{$v_{0}$},draw,fill=red] (0) {}
      (-0.1,1.65)node[shape=circle,draw,fill=black]    (1) {}
      (1,1.65)node[shape=circle,draw,fill=black] (2) {}
      (1.6,2)node[shape=circle,draw,fill=black]  (3) {};

      \draw[-](0)--(1);
      \draw[-](0)--(2);
      \draw[-](2)--(3);
      \draw[-](0)--(4);
      \draw[-](0)--(5);
      % \draw[-](4)--(6);
      % \draw[-](5)--(7);
      \draw[-](4)--(8);
      \draw[-](5)--(9);
      \draw[-](8)--(10);
      \draw[-](9)--(11);
     
\end{tikzpicture}
}
\newcommand{\tres}{
\begin{tikzpicture}[scale=.7,auto=left,every node/.style={circle,scale=0.5}]
%[scale=1,auto=left,every node/.style={circle,scale=0.9}]

\path ( 0,-0.7)node[shape=circle,draw,fill=black] (10) {}
      (1,-0.7)node[shape=circle,draw,fill=black]  (11) {}
      ( 0,0)node[shape=circle,draw,fill=black]    (8) {}
      (1,0)node[shape=circle,draw,fill=black]     (9) {}
      (0,.7)node[shape=circle,draw,fill=black]    (4) {}
      (1,.7)node[shape=circle,draw,fill=black]    (5) {}
      % (-0.6,.7)node[shape=circle,draw,fill=black] (6) {}
      (1.6,.7)node[shape=circle,draw,fill=black]  (7) {}
      (.5,1.3)node[shape=rectangle,label=left:\Large{$v_{0}$},draw,fill=red] (0) {}
      (-0.1,1.65)node[shape=circle,draw,fill=black]    (1) {}
      (1,1.65)node[shape=circle,draw,fill=black] (2) {}
      (1.6,2)node[shape=circle,draw,fill=black]  (3) {};

      \draw[-](0)--(1);
      \draw[-](0)--(2);
      \draw[-](2)--(3);
      \draw[-](0)--(4);
      \draw[-](0)--(5);
      % \draw[-](4)--(6);
      \draw[-](5)--(7);
      \draw[-](4)--(8);
      \draw[-](5)--(9);
      \draw[-](8)--(10);
      \draw[-](9)--(11);
     
\end{tikzpicture}
}
\newcommand{\quatro}{
\begin{tikzpicture}[scale=.7,auto=left,every node/.style={circle,scale=0.5}]
%[scale=1,auto=left,every node/.style={circle,scale=0.9}]

\path ( 0,-0.7)node[shape=circle,draw,fill=black] (10) {}
      (1,-0.7)node[shape=circle,draw,fill=black]  (11) {}
      ( 0,0)node[shape=circle,draw,fill=black]    (8) {}
      (1,0)node[shape=circle,draw,fill=black]     (9) {}
      (0,.7)node[shape=circle,draw,fill=black]    (4) {}
      (1,.7)node[shape=circle,draw,fill=black]    (5) {}
      (-0.6,.7)node[shape=circle,draw,fill=black] (6) {}
      (1.6,.7)node[shape=circle,draw,fill=black]  (7) {}
      (.5,1.3)node[shape=rectangle,label=left:\Large{$v_{0}$},draw,fill=red] (0) {}
      % (-0.1,1.65)node[shape=circle,draw,fill=black]    (1) {}
      (1,1.65)node[shape=circle,draw,fill=black] (2) {}
      (1.6,2)node[shape=circle,draw,fill=black]  (3) {};

      % \draw[-](0)--(1);
      \draw[-](0)--(2);
      \draw[-](2)--(3);
      \draw[-](0)--(4);
      \draw[-](0)--(5);
      \draw[-](4)--(6);
      \draw[-](5)--(7);
      \draw[-](4)--(8);
      \draw[-](5)--(9);
      \draw[-](8)--(10);
      \draw[-](9)--(11);
     
\end{tikzpicture}
}
\newcommand{\cinco}{
\begin{tikzpicture}[scale=.7,auto=left,every node/.style={circle,scale=0.5}]
%[scale=1,auto=left,every node/.style={circle,scale=0.9}]

\path ( 0,-0.7)node[shape=circle,draw,fill=black] (10) {}
      (1,-0.7)node[shape=circle,draw,fill=black]  (11) {}
      ( 0,0)node[shape=circle,draw,fill=black]    (8) {}
      (1,0)node[shape=circle,draw,fill=black]     (9) {}
      (0,.7)node[shape=circle,draw,fill=black]    (4) {}
      (1,.7)node[shape=circle,draw,fill=black]    (5) {}
      % (-0.6,.7)node[shape=circle,draw,fill=black] (6) {}
      % (1.6,.7)node[shape=circle,draw,fill=black]  (7) {}
      (.5,1.3)node[shape=rectangle,label=left:\Large{$v_{0}$},draw,fill=red] (0) {}
      % (-0.1,1.65)node[shape=circle,draw,fill=black]    (1) {}
      (1,1.65)node[shape=circle,draw,fill=black] (2) {}
      (1.6,2)node[shape=circle,draw,fill=black]  (3) {};

      % \draw[-](0)--(1);
      \draw[-](0)--(2);
      \draw[-](2)--(3);
      \draw[-](0)--(4);
      \draw[-](0)--(5);
      % \draw[-](4)--(6);
      % \draw[-](5)--(7);
      \draw[-](4)--(8);
      \draw[-](5)--(9);
      \draw[-](8)--(10);
      \draw[-](9)--(11);
     
\end{tikzpicture}
}
\newcommand{\seis}{
\begin{tikzpicture}[scale=.7,auto=left,every node/.style={circle,scale=0.5}]
%[scale=1,auto=left,every node/.style={circle,scale=0.9}]

\path ( 0,-0.7)node[shape=circle,draw,fill=black] (10) {}
      (1,-0.7)node[shape=circle,draw,fill=black]  (11) {}
      ( 0,0)node[shape=circle,draw,fill=black]    (8) {}
      (1,0)node[shape=circle,draw,fill=black]     (9) {}
      (0,.7)node[shape=circle,draw,fill=black]    (4) {}
      (1,.7)node[shape=circle,draw,fill=black]    (5) {}
      % (-0.6,.7)node[shape=circle,draw,fill=black] (6) {}
      (1.6,.7)node[shape=circle,draw,fill=black]  (7) {}
      (.5,1.3)node[shape=rectangle,label=left:\Large{$v_{0}$},draw,fill=red] (0) {}
      % (-0.1,1.65)node[shape=circle,draw,fill=black]    (1) {}
      (1,1.65)node[shape=circle,draw,fill=black] (2) {}
      (1.6,2)node[shape=circle,draw,fill=black]  (3) {};

      % \draw[-](0)--(1);
      \draw[-](0)--(2);
      \draw[-](2)--(3);
      \draw[-](0)--(4);
      \draw[-](0)--(5);
      % \draw[-](4)--(6);
      \draw[-](5)--(7);
      \draw[-](4)--(8);
      \draw[-](5)--(9);
      \draw[-](8)--(10);
      \draw[-](9)--(11);
     
\end{tikzpicture}
}
\newcommand{\sete}{
\begin{tikzpicture}[scale=.7,auto=left,every node/.style={circle,scale=0.5}]
%[scale=1,auto=left,every node/.style={circle,scale=0.9}]

\path ( 0,-0.7)node[shape=circle,draw,fill=black] (10) {}
      (1,-0.7)node[shape=circle,draw,fill=black]  (11) {}
      ( 0,0)node[shape=circle,draw,fill=black]    (8) {}
      (1,0)node[shape=circle,draw,fill=black]     (9) {}
      (0,.7)node[shape=circle,draw,fill=black]    (4) {}
      (1,.7)node[shape=circle,draw,fill=black]    (5) {}
      (-0.6,.7)node[shape=circle,draw,fill=black] (6) {}
      (1.6,.7)node[shape=circle,draw,fill=black]  (7) {}
      (.5,1.3)node[shape=rectangle,label=left:\Large{$v_{0}$},draw,fill=red] (0) {}
      (-0.1,1.65)node[shape=circle,draw,fill=black]    (1) {}
      % (1,1.65)node[shape=circle,draw,fill=black] (2) {}
      % (1.6,2)node[shape=circle,draw,fill=black]  (3) {}
      ;

      \draw[-](0)--(1);
      % \draw[-](0)--(2);
      % \draw[-](2)--(3);
      \draw[-](0)--(4);
      \draw[-](0)--(5);
      \draw[-](4)--(6);
      \draw[-](5)--(7);
      \draw[-](4)--(8);
      \draw[-](5)--(9);
      \draw[-](8)--(10);
      \draw[-](9)--(11);
     
\end{tikzpicture}
}
\newcommand{\oito}{
\begin{tikzpicture}[scale=.7,auto=left,every node/.style={circle,scale=0.5}]
%[scale=1,auto=left,every node/.style={circle,scale=0.9}]

\path ( 0,-0.7)node[shape=circle,draw,fill=black] (10) {}
      (1,-0.7)node[shape=circle,draw,fill=black]  (11) {}
      ( 0,0)node[shape=circle,draw,fill=black]    (8) {}
      (1,0)node[shape=circle,draw,fill=black]     (9) {}
      (0,.7)node[shape=circle,draw,fill=black]    (4) {}
      (1,.7)node[shape=circle,draw,fill=black]    (5) {}
      % (-0.6,.7)node[shape=circle,draw,fill=black] (6) {}
      % (1.6,.7)node[shape=circle,draw,fill=black]  (7) {}
      (.5,1.3)node[shape=rectangle,label=left:\Large{$v_{0}$},draw,fill=red] (0) {}
      (-0.1,1.65)node[shape=circle,draw,fill=black]    (1) {}
      % (1,1.65)node[shape=circle,draw,fill=black] (2) {}
      % (1.6,2)node[shape=circle,draw,fill=black]  (3) {}
      ;

      \draw[-](0)--(1);
      % \draw[-](0)--(2);
      % \draw[-](2)--(3);
      \draw[-](0)--(4);
      \draw[-](0)--(5);
      % \draw[-](4)--(6);
      % \draw[-](5)--(7);
      \draw[-](4)--(8);
      \draw[-](5)--(9);
      \draw[-](8)--(10);
      \draw[-](9)--(11);
     
\end{tikzpicture}
}
\newcommand{\nove}{
\begin{tikzpicture}[scale=.7,auto=left,every node/.style={circle,scale=0.5}]
%[scale=1,auto=left,every node/.style={circle,scale=0.9}]

\path ( 0,-0.7)node[shape=circle,draw,fill=black] (10) {}
      (1,-0.7)node[shape=circle,draw,fill=black]  (11) {}
      ( 0,0)node[shape=circle,draw,fill=black]    (8) {}
      (1,0)node[shape=circle,draw,fill=black]     (9) {}
      (0,.7)node[shape=circle,draw,fill=black]    (4) {}
      (1,.7)node[shape=circle,draw,fill=black]    (5) {}
      % (-0.6,.7)node[shape=circle,draw,fill=black] (6) {}
      (1.6,.7)node[shape=circle,draw,fill=black]  (7) {}
      (.5,1.3)node[shape=rectangle,label=left:\Large{$v_{0}$},draw,fill=red] (0) {}
      (-0.1,1.65)node[shape=circle,draw,fill=black]    (1) {}
      % (1,1.65)node[shape=circle,draw,fill=black] (2) {}
      % (1.6,2)node[shape=circle,draw,fill=black]  (3) {}
      ;

      \draw[-](0)--(1);
      % \draw[-](0)--(2);
      % \draw[-](2)--(3);
      \draw[-](0)--(4);
      \draw[-](0)--(5);
      % \draw[-](4)--(6);
      \draw[-](5)--(7);
      \draw[-](4)--(8);
      \draw[-](5)--(9);
      \draw[-](8)--(10);
      \draw[-](9)--(11);
     
\end{tikzpicture}
}
\newcommand{\dez}{
\begin{tikzpicture}[scale=.7,auto=left,every node/.style={circle,scale=0.5}]
%[scale=1,auto=left,every node/.style={circle,scale=0.9}]

\path ( 0,-0.7)node[shape=circle,draw,fill=black] (10) {}
      (1,-0.7)node[shape=circle,draw,fill=black]  (11) {}
      ( 0,0)node[shape=circle,draw,fill=black]    (8) {}
      (1,0)node[shape=circle,draw,fill=black]     (9) {}
      (0,.7)node[shape=circle,draw,fill=black]    (4) {}
      (1,.7)node[shape=circle,draw,fill=black]    (5) {}
      (-0.6,.7)node[shape=circle,draw,fill=black] (6) {}
      (1.6,.7)node[shape=circle,draw,fill=black]  (7) {}
      (.5,1.3)node[shape=rectangle,label=left:\Large{$v_{0}$},draw,fill=red] (0) {}
      % (-0.1,1.65)node[shape=circle,draw,fill=black]    (1) {}
      % (1,1.65)node[shape=circle,draw,fill=black] (2) {}
      % (1.6,2)node[shape=circle,draw,fill=black]  (3) {}
      ;

      % \draw[-](0)--(1);
      % \draw[-](0)--(2);
      % \draw[-](2)--(3);
      \draw[-](0)--(4);
      \draw[-](0)--(5);
      \draw[-](4)--(6);
      \draw[-](5)--(7);
      \draw[-](4)--(8);
      \draw[-](5)--(9);
      \draw[-](8)--(10);
      \draw[-](9)--(11);
     
\end{tikzpicture}
}
\newcommand{\onze}{
\begin{tikzpicture}[scale=.7,auto=left,every node/.style={circle,scale=0.5}]
%[scale=1,auto=left,every node/.style={circle,scale=0.9}]

\path ( 0,-0.7)node[shape=circle,draw,fill=black] (10) {}
      (1,-0.7)node[shape=circle,draw,fill=black]  (11) {}
      ( 0,0)node[shape=circle,draw,fill=black]    (8) {}
      (1,0)node[shape=circle,draw,fill=black]     (9) {}
      (0,.7)node[shape=circle,draw,fill=black]    (4) {}
      (1,.7)node[shape=circle,draw,fill=black]    (5) {}
      % (-0.6,.7)node[shape=circle,draw,fill=black] (6) {}
      % (1.6,.7)node[shape=circle,draw,fill=black]  (7) {}
      (.5,1.3)node[shape=rectangle,label=left:\Large{$v_{0}$},draw,fill=red] (0) {}
      % (-0.1,1.65)node[shape=circle,draw,fill=black]    (1) {}
      % (1,1.65)node[shape=circle,draw,fill=black] (2) {}
      % (1.6,2)node[shape=circle,draw,fill=black]  (3) {}
      ;

      % \draw[-](0)--(1);
      % \draw[-](0)--(2);
      % \draw[-](2)--(3);
      \draw[-](0)--(4);
      \draw[-](0)--(5);
      % \draw[-](4)--(6);
      % \draw[-](5)--(7);
      \draw[-](4)--(8);
      \draw[-](5)--(9);
      \draw[-](8)--(10);
      \draw[-](9)--(11);
     
\end{tikzpicture}
}
\newcommand{\doze}{
\begin{tikzpicture}[scale=.7,auto=left,every node/.style={circle,scale=0.5}]
%[scale=1,auto=left,every node/.style={circle,scale=0.9}]

\path ( 0,-0.7)node[shape=circle,draw,fill=black] (10) {}
      (1,-0.7)node[shape=circle,draw,fill=black]  (11) {}
      ( 0,0)node[shape=circle,draw,fill=black]    (8) {}
      (1,0)node[shape=circle,draw,fill=black]     (9) {}
      (0,.7)node[shape=circle,draw,fill=black]    (4) {}
      (1,.7)node[shape=circle,draw,fill=black]    (5) {}
      % (-0.6,.7)node[shape=circle,draw,fill=black] (6) {}
      (1.6,.7)node[shape=circle,draw,fill=black]  (7) {}
      (.5,1.3)node[shape=rectangle,label=left:\Large{$v_{0}$},draw,fill=red] (0) {}
      % (-0.1,1.65)node[shape=circle,draw,fill=black]    (1) {}
      % (1,1.65)node[shape=circle,draw,fill=black] (2) {}
      % (1.6,2)node[shape=circle,draw,fill=black]  (3) {}
      ;

      % \draw[-](0)--(1);
      % \draw[-](0)--(2);
      % \draw[-](2)--(3);
      \draw[-](0)--(4);
      \draw[-](0)--(5);
      % \draw[-](4)--(6);
      \draw[-](5)--(7);
      \draw[-](4)--(8);
      \draw[-](5)--(9);
      \draw[-](8)--(10);
      \draw[-](9)--(11);
     
\end{tikzpicture}
}
\begin{document}
\DeclarePairedDelimiter\ceil{\lceil}{\rceil}
\DeclarePairedDelimiter\floor{\lfloor}{\rfloor}
%********************************************************

\title[On eigenvalues of trees of diameter seven]{On the minimum number of eigenvalues of trees of diameter seven}
\author[L. E. Allem]{L. Emilio Allem}
\address{UFRGS - Universidade Federal do Rio Grande do Sul, 
Instituto de Matem\'atica, Porto Alegre, Brazil}\email{emilio.allem@ufrgs.br}
\author[C. Hoppen]{Carlos Hoppen}
\address{UFRGS - Universidade Federal do Rio Grande do Sul, 
Instituto de Matem\'atica, Porto Alegre, Brazil}\email{choppen@ufrgs.br}
\author[L. S. Sibemberg]{Lucas Siviero Sibemberg}
\address{UFRGS - Universidade Federal do Rio Grande do Sul, 
Instituto de Matem\'atica e Estat\'{i}stica, Porto Alegre, Brazil}\email{lucas.siviero@ufrgs.br}

\subjclass{05C50,15A29}

\keywords{Minimum number of distinct eigenvalues, trees, seeds, integral spectrum}

\maketitle

\begin{abstract}
The underlying graph $G$ of a symmetric matrix $M=(m_{ij})\in \mathbb{R}^{n\times n}$ is the graph with vertex set $\{v_1,\ldots,v_n\}$ such that a pair $\{v_i,v_j\}$ with $i\neq j$ is an edge if and only if $m_{ij}\neq 0$. Given a graph $G$, let $q(G)$ be the minimum number of distinct eigenvalues in a symmetric matrix whose underlying graph is $G$. A symmetric matrix $M$ is said to be a realization of $q(G)$ if it has underlying graph $G$ and $q(G)$ distinct eigenvalues.  
In the case of trees, a paper by Johnson and Saiago [Johnson, C.R, and Saiago, C.M, Diameter Minimal Trees, \emph{Linear and Multilinear Algebra} \textbf{64(3)} (2015), 557--571.] proposed an approach by which realizations of large trees are constructed from realizations of smaller trees with the same diameter, known as seeds, which has proved to be very successful. In this paper, we discuss realizations of $q(T)$ for trees of diameter seven based on the seed that defines it, correcting a result in the aforementioned paper.
\end{abstract}

%%%%%%%%%%%%%%%%%%%%%%%%%%%%%%%%%%%%%%%%%%%%%%%%%%%%%%%%%%%%%%%%%%%%%%%%%%%%%%%%%%%%%%%%%%%%%%%%%%%%%%%%%%%%%%%%%%%%%%%%%%%%%%%%%%%%%%%%%%%%%

\section{Introduction}\label{sec:introducao}

With a (simple) graph $G=(V,E)$ with vertex set $V=\{v_1,\ldots,v_n\}$, we associate the set of real symmetric matrices $$\mathcal{S}(G) = \{M\in \mathbb{R}^{n\times n} \colon M = M^T, (\forall i \neq j) \left[m_{ij}\neq 0 \Longleftrightarrow \{v_i, v_j\} \in E\right]\},$$
namely the set of real symmetric matrices whose nonzero off-diagonal entries are precisely the entries corresponding to the edges of $G$. There is no constraint on diagonal entries. We say that $G$ is the \emph{underlying graph} of a matrix $M \in \mathcal{S}(G)$.
A parameter that has caught the attention of numerous researchers (see~\cite{ahmadi2013,barrett2020,fallat2022} and their references) is $q(G)$, the minimum number of distinct eigenvalues over all matrices in $\mathcal{S}(G)$, i.e., the minimum degree of the minimal polynomial of any matrix in $\mathcal{S}(G)$. More than computing $q(G)$, there has been interest in explicitly constructing a matrix $M\in \mathcal{S}(G)$ for which the set $\DSpec(M)$ of distinct eigenvalues of $M$ satisfies $|\DSpec(M)|=q(G)$, which we call a \emph{realization} of $q(G)$.

In this paper, we address this problem for trees. Following the terminology of previous work~\cite{JSdiminimal,JohnsonSaiago2018,leal2002minimum}, the diameter $d(T)$ of a tree $T$ is the number of \emph{vertices} on a longest path in $T$\footnote{In the graph theory community, it is more common to define the diameter of a tree $T$ as the number of \emph{edges} on a longest path in $T$. Since we shall work with trees with fixed diameter, it is easy to translate our statements with respect to this other definition.}. 
It is not hard to show (see Leal-Duarte and Johnson~\cite{leal2002minimum}) that, for any tree $T$, the inequality $q(T)\geq d(T)$ holds and that it is tight for some trees. Moreover, researchers such as Barioli and Fallat~\cite{barioli2004two} found examples of trees $T$ for which $q(T) > d(T)$. Hence, it is natural to characterize those trees $T$ for which $q(T)=d(T)$, which are called \emph{diameter minimal trees} (or \emph{diminimal trees}, for short). More generally, for any tree $T$, Johnson and Saiago~\cite{JSdiminimal} defined its \emph{disparity} $D(T)=q(T)-d(T)$, so that $D(T)=0$ if and only if $T$ is diminimal. Also because of~\cite{barioli2004two}, we know that, for any positive integer $k$, there is a tree $T$ such that $D(T)>k$. On the other hand,~\cite[Theorem 4]{JSdiminimal} shows that, for any $d \geq 1$, the quantity 
$$C(d)=\max\{q(T) \colon d(T)=d\}$$
is well defined.

For any fixed integer $d \geq 1$, it is easy to see that the set $\mathcal{D}_d$ of diminimal trees of diameter $d$ is nonempty, as the $d$-vertex path $P_{d}$ lies in $\mathcal{D}_d$. Johnson and Saiago~\cite{JSdiminimal,JohnsonSaiago2018} have shown that the families $\mathcal{D}_{d}$ are infinite for every $d$. In particular, Theorem 3 in~\cite{JSdiminimal} states that all trees of diameter $d \leq 6$ are diminimal, thus $D(T)=0$ for all trees such that $d(T)\leq 6$. 
%More recently, Johnson and Wakhare~\cite{johnson2022inverse} proved that every linear tree\footnote{A tree is linear if all vertices of degree at least three lie on a common path.} is diminimal. 
This is not true for trees of diameter $d=7$, as shown in~\cite{barioli2004two}.  

A full characterization of diminimal trees of diameter $d \geq 7$ is still elusive. To be better able to construct diminimal trees, Johnson and Saiago~\cite{JSdiminimal} defined a decomposition of the set of trees of any fixed diameter $d$ based on a finite set of irreducible trees, known as \emph{seeds}, and on an operation, known as \emph{combinatorial branch duplication}, which we now describe. Given a tree $T$, we say that $U$ is a \textit{branch} of $T$ at a vertex $v$ if $U$ is a connected component of $T-v$. We say that $U$ is rooted at the vertex $u \in V(U)$ that is adjacent to $v$ in $T$. An $s$-\textit{combinatorial branch duplication} ($s$-CBD, for short) of a branch $U$ of a tree $T$ at vertex $v$ results in a new tree $T'$ where $s$ copies of $U$ are appended to $T$ at $v$. A tree $\Tilde{T}$ obtained from a tree $T$ through a sequence of CBDs is called an \textit{unfolding} of $T$. An example is depicted in Figure~\ref{fig:unfolding0}. We shall restrict our attention to $s$-CBDs that keep the diameter unchanged, as done in~\cite{allem2023diminimal,JSdiminimal,JohnsonSaiago2018}. A \textit{seed} is a tree that is not an unfolding of any smaller tree of the same diameter. For example, there is a single seed of diameter $d$ if $d\in \{3,4\}$, there are two seeds of diameter $5$, three seeds of diameter $6$, and twelve seeds of diameter $7$. The latter are depicted in Figure~\ref{fig:seeds_seis}. 

\begin{figure}
\centering
\begin{minipage}[h]{.5\textwidth}
\centering 
\begin{subfigure}{0.49\textwidth}
\centering 
     \begin{tikzpicture}[scale=.7,auto=center,every node/.style={circle,scale=0.5}]
%[scale=1,auto=left,every node/.style={circle,scale=0.9}]

\path(0,-.2)node[shape=circle,draw,fill=black] (1) {}
     (1,-.2)node[shape=circle,draw,fill=black] (2) {}
     (.5,.5)node[shape=circle,draw,fill=black] (3) {}
      (.5,1.35)node[shape=circle,label=left:\Large{$u$},draw,fill=black] (0) {}
      (.5,2.2)node[shape=rectangle,label=left:\Large{$v$},draw,fill=red] (4) {}
      (1.2,2.2)node[shape=circle,draw,fill=black] (5) {}
      (1.9,2.2)node[shape=circle,draw,fill=black] (6) {}
      (2.6,2.2)node[shape=circle,draw,fill=black] (7) {}
      
      (0.5,2.9)node[shape=circle,draw,fill=black] (8) {};

     \draw[-](1)--(3);
     \draw[-](2)--(3);
     \draw[-](3)--(0);
     \draw[-](4)--(0);
     \draw[-](5)--(4);
     \draw[-](5)--(6);
     \draw[-](7)--(6);
     \draw[-](4)--(8);

\end{tikzpicture}
     \caption{Tree $T$ of diameter $7$ rooted at $v$.}
 \end{subfigure}
 
 \end{minipage}\hfill
\begin{minipage}[h]{.5\textwidth}
\centering

 \begin{subfigure}{0.49\textwidth}
     \centering
\begin{tikzpicture}[scale=.7,auto=left,every node/.style={circle,scale=0.5}]
%[scale=1,auto=left,every node/.style={circle,scale=0.9}]

\path(0,-.2)node[shape=circle,draw,fill=black] (1) {}
     (1,-.2)node[shape=circle,draw,fill=black] (2) {}
     (.5,.5)node[shape=circle,draw,fill=black] (3) {}
      (.5,1.35)node[shape=circle,label=left:\Large{$u$},draw,fill=black] (0) {}
      (.5,2.2)node[shape=rectangle,label=left:\Large{$v$},draw,fill=red] (4) {}
      (1.2,2.2)node[shape=circle,draw,fill=black] (5) {}
      (1.9,2.2)node[shape=circle,draw,fill=black] (6) {}
      (2.6,2.2)node[shape=circle,draw,fill=black] (7) {}
      
      (0.5,2.9)node[shape=circle,draw,fill=black] (8) {}

      (2,1.35)node[shape=circle,draw,fill=black] (00) {}
      (1.5,-.2)node[shape=circle,draw,fill=black] (10) {}
      (2.5,-.2)node[shape=circle,draw,fill=black] (11) {}
      (2,.5)node[shape=circle,draw,fill=black] (12) {}

      (3.5,1.35)node[shape=circle,draw,fill=black] (000) {}
      (3,-.2)node[shape=circle,draw,fill=black] (13) {}
      (4,-.2)node[shape=circle,draw,fill=black] (14) {}
      (3.5,.5)node[shape=circle,draw,fill=black] (15) {};

     \draw[-](1)--(3);
     \draw[-](2)--(3);
     \draw[-](3)--(0);
     \draw[-](4)--(0);
     \draw[-](5)--(4);
     \draw[-](5)--(6);
     \draw[-](7)--(6);
     \draw[-](8)--(4);

     \draw[-](11)--(12);
     \draw[-](10)--(12);
     \draw[-](00)--(12);
     \draw[dotted](00)--(4);
     
     \draw[-](13)--(15);
     \draw[-](14)--(15);
     \draw[-](000)--(15);
     \draw[dotted](000)--(4);
     
\end{tikzpicture}     
\caption{$2$-CBD of $U$ (the branch of $T-v$ that contains $u$) at $v$.}
 \end{subfigure}
 
 \end{minipage}
\caption{The tree in (B) is an unfolding of the tree $T$ in (A).}\label{fig:unfolding0}
\end{figure}
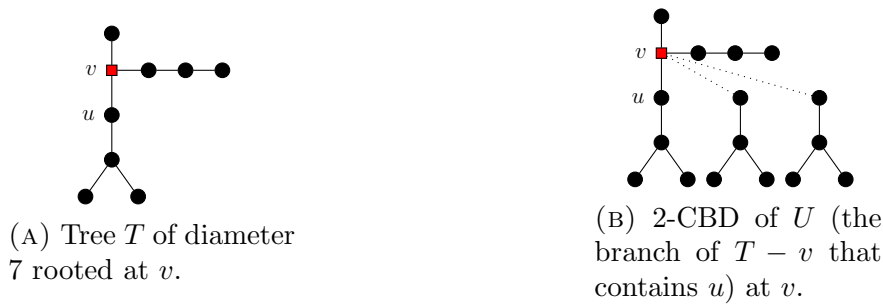

%Given a seed $S$, $\mathcal{T}(S)$ represents the set of all possible unfoldings of $S$. 

\captionsetup[subfigure]{labelformat=empty}
\begin{figure}
\centering
\input{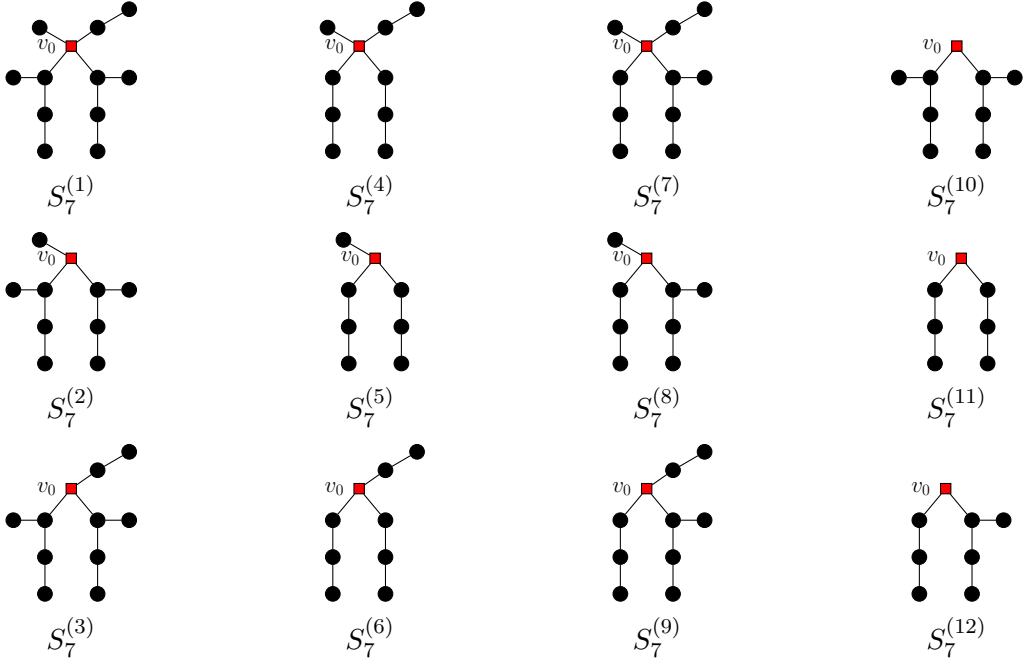}
\caption{The twelve seeds of diameter $7$, denoted $S_7^{(1)},\ldots,S_7^{(12)}$.}
\label{fig:seeds_seis}
\end{figure}

As shown in~\cite{JSdiminimal} (see also \cite{JohnsonSaiago2018,Proyecciones2018}), two of the main reasons to consider seeds %on the diminimal problem 
are the property that, for every $d\in\mathbb{N}$, there exist finitely many seeds of diameter $d$, and the property that every tree with diameter $d$ may be obtained as the unfolding of a unique seed of diameter $d$. In addition to this, Braga, Oliveira, Trevisan and the current authors~\cite{allem2023diminimal} have shown that, for any given diameter $d$, there exists a seed of diameter $d$ with the property that all of its unfoldings are diminimal. A seed with this property is called a \emph{diminimal seed} and, prior to~\cite{allem2023diminimal}, the authors of~\cite{JohnsonSaiago2018} had shown that any seed of diameter at most five is diminimal. On the other hand, seeds for which at least one unfolding is not diminimal will be called \emph{defective seeds}. 

The results of Barioli and Fallat~\cite{barioli2004two} imply that the seeds $S^{(10)}_7$, $S^{(11)}_7$ and $S^{(12)}_7$ in Figure~\ref{fig:seeds_seis} are defective. 
Given a seed $S$ of diameter $d$, let $\mathcal{T}(S)$ be the set of unfoldings of $S$ of diameter $d$. The authors of~\cite{JSdiminimal} claimed that all remaining nine seeds of diameter seven were diminimal, and, for each such seed $S_7^{(i)}$, they depicted the structure of a realization in~\cite[Appendix 1]{JSdiminimal} (see also~\cite[Appendix B]{JohnsonSaiago2018}) that purportedly could be extended to a realization of any tree in $\mathcal{T}(S_7^{(i)})$. Unfortunately, while the realizations of six of the nine seeds could indeed be extended as explained in~\cite{JSdiminimal}, the realizations described for $S^{(7)}_7$, $S^{(8)}_7$ and $S^{(9)}_7$ cannot be extended in the way that was suggested. As it turns out, the seeds are actually defective. This is summarized as follows.
\begin{theorem}\label{t1}
For diameter 7, the seed $S^{(i)}_7$ depicted in Figure~\ref{fig:seeds_seis} is diminimal if and only if $i \leq 6$.
\end{theorem}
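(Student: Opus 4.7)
The plan is to split the theorem into its two implications and handle them with very different techniques. For the sufficiency direction, I would construct, for each $i \in \{1,\ldots,6\}$, an explicit realization of $S_7^{(i)}$ that extends to every tree in $\mathcal{T}(S_7^{(i)})$. For the necessity direction, I would combine the previously known non-diminimal unfoldings of $S_7^{(10)}, S_7^{(11)}, S_7^{(12)}$ from Barioli--Fallat with three new, explicit non-diminimal unfoldings of $S_7^{(7)}, S_7^{(8)}, S_7^{(9)}$.

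For the sufficiency direction, the workhorse is the following standard CBD lemma: if $M \in \mathcal{S}(T)$, $U$ is a branch of $T$ at $v$ with corresponding principal submatrix $M_U$, and $T'$ is obtained from $T$ by an $s$-CBD of $U$ at $v$, then there is a matrix $M' \in \mathcal{S}(T')$ with $\Spec(M') = \Spec(M) \cup \bigcup_{j=1}^{s} \Spec(M_U)$ (with multiplicity). Thus, provided $\Spec(M_U) \subseteq \DSpec(M)$ for every branch $U$ at every vertex of $T$ where a CBD can occur while preserving the diameter, the distinct spectrum is invariant under $s$-CBD. For each $i \leq 6$, I would therefore exhibit a numerical $M_i \in \mathcal{S}(S_7^{(i)})$ with $|\DSpec(M_i)|=7$ such that the spectra of all the relevant branch-submatrices lie in $\DSpec(M_i)$. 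Because a seed of diameter seven has few vertices, such matrices can be found by direct parametric search: write down the eigenvalues $\lambda_1 < \cdots < \lambda_7$ one wishes to realize, impose that each branch-submatrix has characteristic polynomial dividing $\prod_j (x-\lambda_j)$, and solve for the edge weights and diagonal entries. Once one such $M_i$ is produced, the CBD lemma immediately promotes it to a realization of any tree in $\mathcal{T}(S_7^{(i)})$.

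For the necessity direction, the cases $i \in \{10,11,12\}$ already follow from the explicit unfoldings exhibited in~\cite{barioli2004two}. The substantial new content is ruling out $i \in \{7,8,9\}$, and this is where the main obstacle lies: one must prove non-existence of any realization with seven distinct eigenvalues for certain unfoldings, whereas in the sufficiency direction one only needs to exhibit one realization. My plan is, for each of the three seeds, to identify a specific minimal unfolding $T_i$ (obtained by a small number of CBDs that break the compatibility forced upon $S_7^{(i)}$) and then use the Parter--Wiener theorem, together with multiplicity-list constraints derived from the branch structure of $T_i$ at its central vertex, to argue that any $M \in \mathcal{S}(T_i)$ must produce at least eight distinct eigenvalues. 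Concretely, the eigenvalue of maximal multiplicity forces the existence of a Parter vertex whose removal yields several components, and the interlacing of their spectra with those of $M$, combined with the diameter lower bound $q(T_i) \geq 7$, can be exploited to rule out achieving equality. A delicate case analysis on where the Parter vertex can sit in $T_i$ should exhaust the possibilities and finish the proof.
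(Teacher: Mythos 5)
Your high-level decomposition of the theorem into a sufficiency direction (exhibit compatible realizations for $i\le 6$) and a necessity direction (exhibit non-diminimal unfoldings for $i\ge 7$, combining Barioli--Fallat for $i\in\{10,11,12\}$ with new unfoldings for $i\in\{7,8,9\}$) matches the structure of the paper, which in fact settles $i\le 6$ and $i\in\{10,11,12\}$ by citation and devotes all its effort to $i\in\{7,8,9\}$. However, your treatment of the crucial part --- showing that no $M\in\mathcal{S}(T_i)$ achieves seven distinct eigenvalues for a suitable unfolding $T_i$ of $S_7^{(i)}$, $i\in\{7,8,9\}$ --- is a plan, not an argument. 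You invoke the Parter--Wiener theorem, multiplicity-list constraints, and a ``delicate case analysis on where the Parter vertex can sit,'' but none of this is carried out, and it is not at all clear that it closes. The paper instead works with the Jacobs--Trevisan eigenvalue-location algorithm, tracking at which level of the rooted tree zeros appear under $\texttt{Diagonalize}(M,-\lambda)$; the argument hinges on a lemma ($m^{(M)}_{k,k}\ge k+1$, Lemma 3.3) bounding from below the number of distinct $\lambda$ producing a zero at the root, and then on a careful double count of multiplicities across the branches $T_1,\dots,T_4$, culminating in two trace identities that are mutually inconsistent. This pinning down of an infeasible system of linear equations among the putative eigenvalues is the core of the proof, and your sketch gives no indication of how Parter--Wiener alone would reproduce that rigidity.

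A second, smaller gap: for the sufficiency direction you assert that finding one $M_i$ with $|\DSpec(M_i)|=7$ and branch spectra inside $\DSpec(M_i)$ ``immediately promotes it to a realization of any tree in $\mathcal{T}(S_7^{(i)})$.'' Unfoldings are obtained by \emph{sequences} of CBDs, not a single one, so the containment of branch spectra must be verified to persist after each step, not just at the seed. This is exactly the subtlety that caused the error in \cite{JSdiminimal} that the present paper corrects: the original authors described realizations for $S_7^{(7)},S_7^{(8)},S_7^{(9)}$ that looked extendable but were not. So the hypothesis of your CBD lemma cannot be checked only once; you need an inductive invariant across all diameter-preserving unfoldings.
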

%Besides the result about defective seeds, we are interested in proving that the other six seeds are good. 

Moreover, the authors of~\cite{JohnsonSaiago2018} have relied on their incorrect conclusion about $S^{(7)}_7$, $S^{(8)}_7$ and $S^{(9)}_7$ being diminimal to prove that $C(7)=8$ (see~\cite[Theorem 4]{JSdiminimal}). This statement is correct, but the full proof depends on the result below. 
\begin{theorem}\label{t2}
For any tree $T \in \mathcal{T}(S^{(7)}_7) \cup \mathcal{T}(S^{(8)}_7) \cup \mathcal{T}(S^{(9)}_7)$, it holds that $q(T) \leq 8$.
\end{theorem}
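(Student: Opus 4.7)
The plan is to exhibit, for each $i\in\{7,8,9\}$, a specific matrix $M_i\in\mathcal{S}(S^{(i)}_7)$ with $|\DSpec(M_i)|=8$ whose structure is rigid enough that it can be propagated along an arbitrary sequence of diameter-preserving combinatorial branch duplications without introducing new eigenvalues. Since every $T\in\mathcal{T}(S^{(i)}_7)$ is obtained from $S^{(i)}_7$ by such a sequence of CBDs, this will produce, for each such $T$, a matrix in $\mathcal{S}(T)$ with at most eight distinct eigenvalues, which is the desired conclusion.

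The engine behind the propagation is the standard CBD-extension principle used throughout~\cite{JSdiminimal,JohnsonSaiago2018,allem2023diminimal}: if $M\in\mathcal{S}(T)$, $U$ is a branch of $T$ at a vertex $v$, and $T'$ is obtained from $T$ by an $s$-CBD of $U$ at $v$, then there exists $M'\in\mathcal{S}(T')$ whose multiset of eigenvalues consists of those of $M$ together with $s$ copies of the eigenvalues of the principal submatrix $M_U$ of $M$ indexed by $V(U)$. Consequently, if one can arrange that $\DSpec(M_U)\subseteq \DSpec(M)$ for every branch $U$ of the seed that could be duplicated in a diameter-preserving way, then $|\DSpec(M')|=|\DSpec(M)|$ and the same containment is inherited by $M'$ along its own duplicable branches. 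Iterating along the CBD sequence that builds $T$ from $S^{(i)}_7$ will then give the bound $q(T)\leq 8$.

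The technical heart of the proof is therefore the construction, for each $i\in\{7,8,9\}$, of a realization $M_i$ whose diameter-preserving branch submatrices have spectra entirely inside the eight-element set $\DSpec(M_i)$. I would begin from the realizations sketched in the appendices of~\cite{JSdiminimal,JohnsonSaiago2018}, identify why each fails to extend (namely, that some branch submatrix produces an eigenvalue outside $\DSpec$ of the full matrix), and then modify the diagonal and off-diagonal entries so that each such branch submatrix has its spectrum absorbed into $\DSpec(M_i)$. Enumerating the branches of $S^{(7)}_7$, $S^{(8)}_7$, $S^{(9)}_7$ rooted at internal vertices of the diametral path (these are short paths and small stars) yields a finite list of polynomial constraints on the entries of $M_i$; the main obstacle is to solve this system while keeping exactly eight distinct eigenvalues, and I expect it will be handled by an explicit construction, possibly involving algebraic numbers, with the constraints checked by direct computation of characteristic polynomials.

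With the matrices $M_7$, $M_8$, $M_9$ in hand, the theorem follows by induction on the length of the CBD sequence producing $T$ from the seed: the base case is $T=S^{(i)}_7$ with realization $M_i$, and each inductive step applies the CBD-extension principle to a branch $U$ whose spectral containment in $\DSpec$ has been preserved by construction. Hence $q(T)\leq |\DSpec(M_i)|=8$ for every $T\in\mathcal{T}(S^{(7)}_7)\cup\mathcal{T}(S^{(8)}_7)\cup\mathcal{T}(S^{(9)}_7)$.
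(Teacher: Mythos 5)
The strategy you describe---build a realization whose branch‑submatrix spectra lie inside its own $\DSpec$, and propagate it along diameter‑preserving CBDs---is exactly the one the paper executes in Section~\ref{sec:thm2}. However, your text is a plan rather than a proof, and two substantive pieces are left undone.

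First, the matrices $M_7,M_8,M_9$ are never exhibited; you explicitly defer this step. More importantly, the ``CBD‑extension principle'' you invoke only decouples the spectrum of $M'$ into $\Spec(M)$ plus $s$ copies of $\Spec(M_U)$ for the particular $M'$ in which the connecting edge weights are rescaled by $1/\sqrt{s+1}$, so that the $s+1$ parallel copies can be rotated into a symmetric part (reproducing $M$) and $s$ decoupled copies of $M_U$. Consequently the propagated matrices cannot have entries fixed once and for all on the seed: their off‑diagonal weights must depend on the full unfolding data, which is precisely why the paper's constructions in Figures~\ref{q8T1},~\ref{q8T2} and~\ref{q8T0} carry parameters such as $1/\sqrt{t_i}$, $2\sqrt{2}/\sqrt{p}$ and $\sqrt{5/q_1}$. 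Your phrasing of a single rigid seed matrix that is then ``propagated'' glosses over this necessary reparametrization.

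Second, the sentence ``the same containment is inherited by $M'$ along its own duplicable branches'' is asserted but not argued, and it is not immediate: a CBD at a vertex $w$ duplicating $U$ enlarges every branch at every vertex on the path from $w$ up to the center, and one must check that each enlarged branch submatrix---which, after decoupling, is $M[W]\oplus M_U^{\oplus s}$ for the corresponding original branch $W$---still has spectrum inside $\DSpec(M')$, recursively through an arbitrary CBD sequence. The paper carries out exactly this bookkeeping: Lemmas~\ref{spectraT1},~\ref{spectraT2} and~\ref{upperseed} pin down, for each parametrized branch matrix, both the spectrum and the level at which \texttt{Diagonalize} produces zeros, and Claim~\ref{multiplicityU0} inside the proof of Theorem~\ref{q8thm} assembles this information into exact multiplicity identities for the full $M$, so that every eigenvalue of $M$ other than $\lambda_{\min}$ and $\lambda_{\max}$ is accounted for inside $\{-3,-1,0,1,2,3\}$. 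Without both the explicit constructions and that accounting, the induction in your final paragraph does not yet constitute a proof.
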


The remainder of the paper is structured as follows. In Section~\ref{sec:literature}, we describe an eigenvalue location algorithm by Jacobs and Trevisan~\cite{JT2011} that may be applied to weighted trees and is the main tool in our proofs. 
%Moreover, we describe results in Allem \textit{et al.}~\cite{allem2023diminimal} that help us understand the effect of operation $\odot$ on the extension of diminimal matrices. 
In Section~\ref{sec:thm11}, we show that the seeds $S^{(i)}_7$ are defective for $7 \leq i \leq 9$, which, together with the results in~\cite{barioli2004two}, establishes that $S^{(i)}_7$ is defective for every $i \geq 7$. The fact that the seeds $S^{(i)}_7$ are diminimal for $i \leq 6$ is a consequence of~\cite{JSdiminimal}.

To conclude the paper, in Section~\ref{sec:thm2}, we show that Theorem~\ref{t2} holds by presenting matrices for  $S^{(7)}_7$, $S^{(8)}_7$ and $S^{(8)}_7$ with eight distinct eigenvalues that can be extended to realizations of any tree in $\mathcal{T}(S^{(7)}_7) \cup \mathcal{T}(S^{(8)}_7) \cup \mathcal{T}(S^{(9)}_7)$.

%In other work~\cite{bricks}, we show that, for any even number $d \geq 6$, there are six distinct diminimal seeds of diameter $d$. For $d=6$, these are precisely the seeds $S^{(i)}_6$ for $1 \leq i \leq 6$.

%To define the seeds, we use the following graph operations. Let $p\geq 1$ and consider disjoint rooted trees $T_0,T_1,\ldots,T_p$ rooted at vertices $v_0,v_1,\ldots,v_p$, respectively. We denote by $T = T_0\odot (T_1,\ldots,T_p)$ the tree with vertex set $V(T) = \bigcup_{\ell=0}^{p}V(T_\ell)$ and edge set $E=\{\{v_0,v_\ell\} \colon \ell \in [p]\}\cup\bigcup_{\ell=0}^{p}E(T_\ell)$. An example for $p=3$ is depicted in Figure~\ref{fig:odot}.

%\newcommand{\Ttres}{
%\begin{tikzpicture}[scale=.7,auto=left,every node/.style={circle,scale=0.5}]
%[scale=1,auto=left,every node/.style={circle,scale=0.9}]
%\path( 0,0)node[shape=circle,draw,fill=black] (1) {}
%      (.5,0)node[shape=circle,draw,fill=black] (2) {}
%      (1.5,0)node[shape=circle,draw,fill=black] (3) {}
%      (1,1)node[shape=circle,draw,fill=black] (4) {}
%      (0,1)node[shape=rectangle,label=left:\Large{$v_{3}$},draw,fill=red] (5) {};
%      
%      \draw[-](2)--(4);
%      \draw[-](3)--(4);
%      \draw[-](5)--(4);
%      \draw[-](5)--(1);
%\end{tikzpicture}
%}

%%%%%%%%%%%%%%%%%%%%%%%%%%%%%%%%%%%%%%%%%%%%%%%%%%%%% Tzero
\newcommand{\Tzero}{
\begin{tikzpicture}[scale=.7,auto=left,every node/.style={circle,scale=0.5}]
%[scale=1,auto=left,every node/.style={circle,scale=0.9}]
\path( 0,0)node[shape=circle,draw,fill=black] (1) {}
      (1,0)node[shape=circle,draw,fill=black] (2) {}
      (0,.7)node[shape=circle,draw,fill=black] (3) {}
      (1,.7)node[shape=circle,draw,fill=black] (4) {}
      (.5,1.3)node[shape=rectangle,label=left:\Large{$v_{0}$},draw,fill=red] (5) {}
      (.5,2)node[shape=circle,draw,fill=black] (6) {};

      \draw[-](1)--(3);
      \draw[-](3)--(5);
      \draw[-](2)--(4);
      \draw[-](4)--(5);
      \draw[-](5)--(6);
\end{tikzpicture}
}
%%%%%%%%%%%%%%%%%%%%%%%%%%%%%%%%%%%%%%%%%%%%%%%%%%%%%%% T2
\newcommand{\Tdois}{
\begin{tikzpicture}[scale=.7,auto=left,every node/.style={circle,scale=0.5}]
%[scale=1,auto=left,every node/.style={circle,scale=0.9}]
\path( 0,0)node[shape=circle,draw,fill=black] (1) {}
      (1,.5)node[shape=rectangle,label=right:\Large{$v_{2}$},draw,fill=red] (2) {};       

      \draw[-](1)--(2);
          
     \end{tikzpicture}
}
%%%%%%%%%%%%%%%%%%%%%%%%%%%%%%%%%% Tum
\newcommand{\Tum}{
\begin{tikzpicture}[scale=.7,auto=left,every node/.style={circle,scale=0.5}]
%[scale=1,auto=left,every node/.style={circle,scale=0.9}]

\path( 0,0)node[shape=circle,draw,fill=black] (1) {}
      (.5,.5)node[shape=rectangle,label=left:\Large{$v_{1}$},draw,fill=red] (2) {}
      (1,1)node[shape=circle,draw,fill=black] (3) {};
      \draw[-](1)--(2);
      \draw[-](2)--(3);  
\end{tikzpicture}
}

%%%%%%%%%%%%%%%%%%%%%%%%%%%%%%%%%% operation
\newcommand{\operation}{
\begin{tikzpicture}[scale=.7,auto=left,every node/.style={circle,scale=0.5}]
\path( 0,0)node[shape=circle,draw,fill=black] (1) {}
      (.6,0)node[shape=circle,draw,fill=black] (2) {}
      (.3,1)node[shape=circle,label=left:\Large{$v_{1}$},draw,fill=black] (3) {} 

      (1.5,0)node[shape=circle,draw,fill=black] (4) {}
      (1.5,1)node[shape=circle,label=left:\Large{$v_{2}$},draw,fill=black] (5) {}  

      (2.5,0)node[shape=circle,draw,fill=black] (6) {}
      (2.5,1)node[shape=circle,label=left:\Large{$v_{3}$},draw,fill=black] (7) {} 
      (3.2,0)node[shape=circle,draw,fill=black] (8) {}      
      (3.5,1)node[shape=circle,draw,fill=black] (9) {}
      (3.7,0)node[shape=circle,draw,fill=black] (10) {}

      (1.5,2)node[shape=rectangle,label=left:\Large{$v_{0}$},draw,fill=red] (0) {}
      (1.2,3)node[shape=circle,draw,fill=black] (11) {}
      (2.2,2.5)node[shape=circle,draw,fill=black] (14) {}
      (3,2.5)node[shape=circle,draw,fill=black] (15) {}
      (2.2,3.5)node[shape=circle,draw,fill=black] (12) {}
      (3,3.5)node[shape=circle,draw,fill=black] (13) {};

      \draw[-](1)--(3);
      \draw[-](2)--(3);

      \draw[-](4)--(5);

      \draw[-](6)--(7);
      \draw[-](7)--(9);
      \draw[-](8)--(9);
      \draw[-](9)--(10);

      \draw[-](0)--(11);
      \draw[-](0)--(14);
      \draw[-](0)--(12);
      \draw[-](14)--(15);
      \draw[-](12)--(13);

      \draw[dotted](0)--(3);
      \draw[dotted](0)--(5);
      \draw[dotted](0)--(7);
\end{tikzpicture}
}

\section{Eigenvalue location in trees}\label{sec:literature}

The main technical tool in the proofs of this paper is an eigenvalue location algorithm.  
%and a technical result from a recent paper of Braga, Oliveira, Trevisan, and the current authors~\cite{allem2023diminimal}. They will be the subject of this section.
For a real symmetric matrix $M$ whose underlying graph is a tree and a real number $x$, Algorithm \texttt{Diagonalize} (see Figure~\ref{treealgo}) finds a diagonal matrix $D$ that is \emph{congruent} to $M+Ix$. This means that there is an invertible matrix $S$ such that $D=S(M+Ix)S^T$. By Sylvester's Law of Inertia, the multiplicity of $-x$ as an eigenvalue of $M$ is equal to the number of zeros on the diagonal of $D$, the number of eigenvalues (with multiplicity) of $M$ that are greater than $-x$ is equal to the number of positive entries of $D$, and the number of eigenvalues (with multiplicity) of $M$ that are less than $-x$ is equal to the number of negative entries of $D$. This is why it is known as an eigenvalue location algorithm. For more information, we refer to the original paper by Jacobs and Trevisan~\cite{JT2011}, which focused on the adjacency matrix of trees, to the paper by Braga and Rodrigues~\cite{TEMA1041} that extended it to arbitrary symmetric matrices whose underlying graph is a tree, and to the book~\cite{diagonalize}, which considers eigenvalue location more broader classes of matrices.

The algorithm orders the vertices on the tree in a particular way. To define this ordering, consider a tree $T=(V,E)$ rooted at vertex $r$. As usual, we say that $T$ has \emph{depth} $k$ if $k$ is the length of a longest path between $r$ and a leaf of $T$. The level of a vertex $v$ on the tree is the distance between $v$ and $r$, so that we may partition $V$ as $V=B_0 \cup \cdots \cup B_k$, where $B_i$ denotes the set of vertices of $T$ at level $i$. The ordering $v_1,\ldots,v_n$ of $V$ used by the algorithm is such that vertices in $B_k$ appear first, vertices in $B_{k-1}$ appear next, and so on, up to $v_n=r$. 

Coming back to the algorithm, each vertex is assigned an initial value, and the algorithm works in rounds, so that each vertex $v_i$ is \emph{processed} in one of the rounds. Processing a vertex might change the value assigned to it and the value assigned to at most one of its children.
%Given a rooted tree $T$ with vertices $v_1, v_2, \ldots, v_n$ arranged bottom-up, the algorithm \texttt{Diagonalize}$(M,x)$ takes a matrix $M\in\mathcal{S}(T)$ and a real number $x$ as input and returns a matrix $D$ congruent to $M$.
%where the multiplicity of $-x$ in $M$ is equal to the number of zeros on the diagonal of $D$. We can interpret this matrix $D$ as if vertex $v_i$ received the value located at position $i$ in matrix $D$. 
%We note that each vertex will have, at most, three values assignment, one when the algorithm starts, one after processing and one after the vertex parent being processed. As we shall see, while \texttt{Diagonalize}$(M,x)$ is executed, each vertex $v_i$ is processed once and assigned a value 
%\color{red}For future reference, we denote by $\hat{d}_{v_i}$ the value assigned to $v_i$ just after it has been processed, and by $d_{v_i}$ the final value assigned to vertex $v_i$. [Acho que nao estamos sendo consistentes nesse ponto. Tambem nao sei se vale a pena fazer deste jeito.]\color{black}

\begin{figure}
{\tt
\begin{tabbing}
aaa\=aaa\=aaa\=aaa\=aaa\=aaa\=aaa\=aaa\= \kill
     \> Input: matrix $M = (m_{ij})$ with underlying tree $T$\\
     \> Input: Bottom up ordering $v_1,\ldots,v_n$ of $V(T)$\\
     \> Input: real number $x$ \\
     \> Output: diagonal matrix $D = \mbox{diag}(d_1, \ldots, d_n)$ congruent to $M + xI$ \\
     \> \\
     \>  Algorithm $\mbox{Diagonalize}(M, x)$ \\
     1\> \> initialize $d_i := m_{ii} + x$, for all $i$ \\
     2\> \> {\bf for } $k = 1$ to $n$ \\
     3\> \> \> {\bf if} $v_k$ is a leaf {\bf then} continue \\
     4\> \> \> {\bf else if} $d_c \neq 0$ for all children $c$ of $v_k$ {\bf then} \\
     5\> \> \>  \>   $d_k := d_k - \sum \frac{(m_{ck})^2}{d_c}$, summing over all children of $v_k$ \\
     6\> \> \> {\bf else } \\
     7\> \> \> \> select one child $v_j$ of $v_k$ for which $d_j = 0$  \\
     8\> \> \> \> $d_k  := -\frac{(m_{jk})^2}{2}$ \\
     9\> \> \> \> $d_j  :=  2$ \\
     10\> \> \> \> if $v_k$ has a parent $v_\ell$, remove the edge $\{v_k,v_\ell\}$. \\
     11\> \>  {\bf end loop} \\
\end{tabbing}
}
\caption{Algorithm Diagonalize}
\label{treealgo}
\end{figure}

We also state the application of Sylvester's Law of Inertia to the output of Algorithm~\ref{treealgo} as a formal result.
\begin{theorem}
\label{thm_localizacao}
Let $M$ be a symmetric matrix and let $x$ be a real number. Let $D$ be the diagonal matrix produced by Algorithm~\ref{treealgo} with input $M$ and $x$. The following hold:
\begin{itemize}
\item[(a)] The number of eigenvalues of $M$ that are greater than $-x$ is equal to the number of positive entries in the diagonal of $D$.
\item[(b)] The number of eigenvalues of $M$ that are less than $-x$ is equal to the number of negative entries in the diagonal of $D$.
\item[(c)] The multiplicity of $-x$ as an eigenvalue of $M$ is equal to the number of zeros in the diagonal of $D$.
\end{itemize}
\end{theorem}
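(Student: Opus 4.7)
The plan is to derive the theorem as an immediate consequence of Sylvester's Law of Inertia, once we have established that the diagonal matrix $D$ returned by Algorithm~\ref{treealgo} is congruent to $M+xI$. Most of the work lies in proving this congruence, which proceeds by induction on the bottom-up ordering $v_1,\ldots,v_n$.

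Set $A=M+xI$, and note that its diagonal is exactly the initialization $d_i := m_{ii}+x$ in line 1. Processing vertex $v_k$ should be reinterpreted as a congruence $A \mapsto S_k A S_k^T$ for some invertible $S_k$ that is the identity outside rows and columns indexed by $v_k$ and its children. In the ``generic'' case of lines 4--5, where every child $c$ of $v_k$ has $d_c\neq 0$, the update corresponds to a sequence of symmetric Gaussian elimination steps: for each child $c$, subtract $(m_{ck}/d_c)$ times row $c$ from row $k$ (and do the analogous column operation). This zeroes out the $(c,k)$ and $(k,c)$ entries of $A$ and changes the $(k,k)$ entry by exactly $-\sum (m_{ck})^2/d_c$, matching line 5. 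Since the children of $v_k$ have themselves already been diagonalized (they are either leaves or were processed in an earlier round), the elimination does not touch any previously cleared off-diagonal entry.

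The more delicate case is lines 6--10. If some child $v_j$ of $v_k$ satisfies $d_j=0$, the $2\times 2$ principal submatrix on rows/columns $\{j,k\}$ equals $\bigl(\begin{smallmatrix} 0 & m_{jk}\\ m_{jk} & d_k\end{smallmatrix}\bigr)$, and I would exhibit an explicit $2\times 2$ invertible matrix $P$ (extended by the identity on the remaining coordinates) such that $P\bigl(\begin{smallmatrix} 0 & m_{jk}\\ m_{jk} & d_k\end{smallmatrix}\bigr)P^T = \bigl(\begin{smallmatrix} 2 & 0\\ 0 & -m_{jk}^2/2\end{smallmatrix}\bigr)$, consistent with lines 8--9. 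The key point is that after this congruence the rows/columns indexed by $v_j$ and $v_k$ have no remaining coupling to the other children of $v_k$ (they were already diagonalized), so the off-diagonal entries to those vertices stay zero. The edge deletion in line 10 records the fact that the effective coupling between $v_k$ and its parent $v_\ell$ has been absorbed into this $2\times 2$ block and must be ignored when $v_\ell$ is later processed; this is justified by checking that the congruence zeroes out the $(k,\ell)$ and $(\ell,k)$ entries of the current matrix, which I would verify by direct computation using the explicit form of $P$.

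Once the congruence $D = S(M+xI)S^T$ is established for the product $S$ of all the elementary factors, Sylvester's Law of Inertia yields that the number of positive, negative, and zero diagonal entries of $D$ coincides with the number of positive, negative, and zero eigenvalues of $M+xI$, respectively. Since $\lambda$ is an eigenvalue of $M+xI$ if and only if $\lambda-x$ is an eigenvalue of $M$, the three bullet points (a), (b), (c) follow immediately by translating ``positive/negative/zero'' into ``greater than $-x$ / less than $-x$ / equal to $-x$''. The main obstacle is the careful bookkeeping required in the zero-pivot branch: one must verify an inductive invariant stating that, at each stage, all off-diagonal entries of the current matrix indexed by pairs of already-processed vertices have been zeroed, so that the next congruence really only affects the ``live'' submatrix associated with the next vertex and its children.
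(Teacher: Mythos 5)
The paper never proves Theorem~\ref{thm_localizacao}: it states the result as the application of Sylvester's Law of Inertia to the congruence $D=S(M+xI)S^{T}$, and for that congruence it simply cites Jacobs--Trevisan~\cite{JT2011}, Braga--Rodrigues~\cite{TEMA1041} and the book~\cite{diagonalize}. Your proposal reconstructs the argument of those references: an induction along the bottom-up ordering in which each processing step of Algorithm~\ref{treealgo} is realized as a congruence by symmetric elementary row/column operations, followed by Sylvester's Law and the shift $\lambda\mapsto\lambda-x$ to obtain (a)--(c). The overall route, the generic Gaussian-elimination case (lines 4--5), and the final reduction are all correct.

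One assertion in your zero-pivot branch is false as written and is precisely the point where the bookkeeping matters. You claim that after the $2\times 2$ congruence on $\{v_j,v_k\}$ the rows/columns of $v_j$ and $v_k$ ``have no remaining coupling to the other children of $v_k$ (they were already diagonalized), so the off-diagonal entries to those vertices stay zero.'' Those entries were never zero: for every child $c$ of $v_k$ the entry $m_{ck}$ is a nonzero entry of the original matrix, and diagonalizing the subtree below $c$ does not touch the $(c,k)$ entry. The correct mechanism --- which also justifies the edge deletion in line 10 --- is to pivot first on the \emph{off-diagonal} entry $m_{jk}$: the current row $j$ has $d_j=0$ on the diagonal and $m_{jk}$ as its only other nonzero entry, so subtracting $(m_{wk}/m_{jk})$ times row $j$ from row $w$ (and symmetrically for columns) annihilates the $(w,k)$ entry for every other neighbor $w$ of $v_k$, i.e.\ the remaining children \emph{and} the parent $v_\ell$, while leaving every diagonal value $d_w$ unchanged (because $d_j=0$ and $m_{wj}=0$ for such $w$). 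Only after this sweep is the $\{j,k\}$ block decoupled from the rest, at which point your $2\times 2$ congruence to $\mathrm{diag}\bigl(2,-m_{jk}^{2}/2\bigr)$ (both blocks have inertia $(1,1,0)$) applies, and the other children of $v_k$ correctly retain their values $d_c$ exactly as the algorithm prescribes. With that correction your induction closes and the proof is complete.
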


Based on the algorithm, we may also establish the following well-known result. For a proof, see~\cite[Theorem 2.2]{allem2023diminimal}.
\begin{theorem}\label{thm:simpleroots}
Let $T$ be a tree with root $r$, let $M\in \mathcal{S}(T)$, and consider $\lambda_{\min}=\min(\Spec(M))$ and $\lambda_{\max}=\max(\Spec(M))$. Then, $m_{M}(\lambda_{\min}) = 1 =m_{M}(\lambda_{\max})$. Moreover, $r$ is the only vertex $v$ for which the algorithm \texttt{Diagonalize}$(M,-\lambda)$ assigns $d_v=0$.
\end{theorem}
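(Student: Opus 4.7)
The plan is to handle the two claims separately and then combine them. First, I would prove simplicity of $\lambda_{\max}$ and $\lambda_{\min}$ by a Perron--Frobenius argument that exploits the bipartiteness of $T$; then I would use Theorem~\ref{thm_localizacao} together with the sign pattern produced by \texttt{Diagonalize} to show that the conditional else branch (lines 6--10) never fires when the input is $x=-\lambda_{\min}$ or $x=-\lambda_{\max}$; and finally I would locate the unique zero of $D$ at $v_n=r$.

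For the first step, the key observation is that because $T$ is bipartite there is a diagonal $\pm 1$ matrix $S$ such that $SMS$ has non-negative off-diagonal entries (root $T$ at any vertex and propagate signs edge by edge; the absence of cycles guarantees consistency). Since $M$ and $SMS$ are similar, they share the same spectrum, and for sufficiently large $c\in\mathbb{R}$ the matrix $SMS+cI$ is an entrywise non-negative matrix whose underlying graph is still the connected tree $T$, hence irreducible. Perron--Frobenius then gives $m_M(\lambda_{\max})=1$. Applying the same reasoning to $-M$ (whose underlying graph is still $T$) yields $m_M(\lambda_{\min})=1$.

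For the second step, fix $\lambda\in\{\lambda_{\min},\lambda_{\max}\}$ and run \texttt{Diagonalize}$(M,-\lambda)$. When $\lambda=\lambda_{\max}$, Theorem~\ref{thm_localizacao} combined with the first step forces the output diagonal to have no positive entries, exactly one zero, and the rest negative; but every execution of the else branch sets $d_j:=2>0$, violating this sign pattern. Hence the else branch never fires. The case $\lambda=\lambda_{\min}$ is symmetric: the required absence of negative entries is incompatible with the assignment $d_k:=-(m_{jk})^2/2<0$ performed in the else branch. In both cases, line 5 is the only relevant operation at each non-leaf vertex, and no edge is ever removed during the execution.

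Finally, suppose for contradiction that some non-root vertex $v_k$ satisfies $d_{v_k}=0$ at termination. The main technical point, and the subtlest part of the argument, is to track values carefully across the iterations: once $v_k$ has been processed, its diagonal entry can only be modified by the else branch triggered at its parent $v_\ell$, and that branch is excluded by the second step. Consequently, $d_{v_k}=0$ persists up to the moment $v_\ell$ is processed, and, by the same token, the edge $\{v_k,v_\ell\}$ has not been removed, so $v_k$ remains a current child of $v_\ell$. But then, when $v_\ell$ is processed, the presence of a child with value $0$ forces the else branch to activate, a contradiction. Hence the unique zero of $D$ must sit at $v_n=r$, as claimed.
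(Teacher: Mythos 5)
Your proof is correct and self-contained, which is in fact more than the paper itself provides: Theorem~\ref{thm:simpleroots} is not proved in the text but is delegated to~\cite[Theorem 2.2]{allem2023diminimal}, so there is no in-paper argument to compare against. The route you take is the natural one: a Perron--Frobenius argument for the simplicity of $\lambda_{\min}$ and $\lambda_{\max}$, then Sylvester's inertia (via Theorem~\ref{thm_localizacao}) to pin down the sign pattern of the output of \texttt{Diagonalize}, which excludes the else branch and therefore forces the single zero of $D$ to sit at $v_n=r$; your careful note that, once the else branch is ruled out, a zero at a non-root $v_k$ would survive unchanged until its parent $v_\ell$ is processed and would then trigger exactly that branch, is the right way to close the argument. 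One small misattribution is worth fixing: the existence of a diagonal $\pm 1$ similarity making all off-diagonal entries non-negative is not a consequence of bipartiteness but of acyclicity (equivalently, the signed graph determined by $\operatorname{sign}(m_{ij})$ is trivially balanced because there are no cycles); a bipartite graph containing a cycle need not admit such a signing. Your parenthetical remark already invokes the correct reason, so this is purely a matter of wording.
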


\section{Proof of Theorem~\ref{t1}}\label{sec:thm11}

The aim of this section is to show that the seeds $S_7^{(7)}$, $S_7^{(8)}$ and $S_7^{(9)}$ are defective. We start with a proof for $S_7^{(8)}$, the other two proofs will use a similar strategy. Recall that a rooted tree has depth $k$ if $k$ is the length of the path between the root and the furthest leaf. Given a tree $T$ rooted at a vertex $r$, we define $B_j=\{u\in V(T):d(u,r)=k-j \}$ as the sets of vertices of $T$ that are  $k-j$ edges away from $r$. We say that vertices in $B_j$ are at \emph{level} $j$. This terminology means that the levels are considered bottom-up (i.e., $B_0$ contains the furthest leaves and $B_k$ contains the root), which is not standard, but is consistent with the way in which algorithm \texttt{Diagonalize} acts on the tree.

\begin{defn}\label{def:mkl}
     Let $T$ be a rooted tree of depth $k$, $\lambda\in\mathbb{R}$ and $M\in\mathcal{S}(T)$. Given $j \in\{0,1,\ldots,k\}$, consider $T_j=B_0\cup\cdots\cup B_j$. For any integer $\ell$, where $0\leq \ell \leq j$, we define $m^{(M)}_{j,\ell}(\lambda)$ as the number of $0$'s at level $\ell$ when we apply \texttt{Diagonalize}$(M[T_j],-\lambda)$. Let $m^{(M)}_{j,\ell}=\sum_{\lambda\in\mathbb{R}}m^{(M)}_{j,\ell}(\lambda)$.
\end{defn}

This definition leads to
\begin{eqnarray}
|V(T_j)| = \sum_{\lambda\in\mathbb{R}}m_{M[T_j]}(\lambda) &=& \sum_{\lambda\in\mathbb{R}}\sum_{\ell=0}^j m^{(M)}_{j,\ell}(\lambda)=\sum_{\ell=0}^j m^{(M)}_{j,\ell}\label{auxeq1}.
\end{eqnarray}
Indeed, given $\lambda\in\mathbb{R}$, $\sum_{\ell=0}^j m^{(M)}_{j,\ell}(\lambda)$ is the total number of $0$'s in the diagonal of the diagonal matrix produced by $\texttt{Diagonalize}(M[T_j],-\lambda)$, which by Theorem~\ref{thm_localizacao}(c) is the multiplicity of $\lambda$ in $M[T_j]$.

Further note that, for any $\lambda\in\mathbb{R}$ such that $m^{(M)}_{k-1,k-1}(\lambda)>0$, we have $m^{(M)}_{k,k-1}(\lambda)=m^{(M)}_{k-1,k-1}(\lambda)-1$, because when running the algorithm $\texttt{Diagonalize}(M,-\lambda)$ on the layer $B_{k}=\{r\}$ that contains the root of $T$, one of the zeros at a child of $r$ will be redefined as $2$. As a consequence, for 
$$N_T(M)=N(M)=\left|\{\lambda\in\mathbb{R}:m^{(M)}_{k-1,k-1}(\lambda)>0\}\right|,$$ we have $m^{(M)}_{k,k-1}=m^{(M)}_{k-1,k-1}-N(M)$. This allows us to prove the following.

\begin{lemat}\label{lemmamkk}
    Let $T$ be a rooted tree of depth $k$ and $M\in\mathcal{S}(T)$. Then, $m^{(M)}_{k,k}=N(M)+1$.
\end{lemat}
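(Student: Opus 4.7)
The plan is a short counting argument that combines equation~(\ref{auxeq1}), applied with $j = k$ and $j = k-1$, with the identity $m^{(M)}_{k,k-1} = m^{(M)}_{k-1,k-1} - N(M)$ derived in the paragraph immediately preceding the statement.

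First I would show that for every $\ell \in \{0, 1, \ldots, k-2\}$ and every $\lambda \in \mathbb{R}$,
\begin{equation*}
m^{(M)}_{k,\ell}(\lambda) = m^{(M)}_{k-1,\ell}(\lambda).
\end{equation*}
The $d$-value of a vertex $v$ at level $\ell$ is altered only when $v$ itself is processed or when its parent, which sits at level $\ell + 1 \leq k-1$, is processed in the else branch of \texttt{Diagonalize}. Both of these processing steps are identical in the runs on $M[T_k]$ and $M[T_{k-1}]$, since $v$, its children, and its parent all lie in both $T_k$ and $T_{k-1}$. Moreover, the subsequent processing of the root $r$, which occurs only in the run on $T_k$, can only alter $d$-values of vertices at level $k-1$, leaving the level-$\ell$ values unchanged. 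Summing over $\lambda$ yields $m^{(M)}_{k,\ell} = m^{(M)}_{k-1,\ell}$ for every $\ell \leq k-2$.

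Second, I would apply equation~(\ref{auxeq1}) with $j = k$ and $j = k-1$ and subtract, using $|V(T_k)| - |V(T_{k-1})| = |B_k| = 1$, to obtain
\begin{equation*}
1 = m^{(M)}_{k,k} + \sum_{\ell = 0}^{k-1} \bigl( m^{(M)}_{k,\ell} - m^{(M)}_{k-1,\ell} \bigr).
\end{equation*}
By the first step, all summands with $\ell \leq k - 2$ vanish, and the remaining term for $\ell = k-1$ equals $m^{(M)}_{k,k-1} - m^{(M)}_{k-1,k-1} = -N(M)$ by the identity stated just before the lemma. Hence $1 = m^{(M)}_{k,k} - N(M)$, which rearranges to $m^{(M)}_{k,k} = N(M) + 1$, as required. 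No substantive obstacle is expected: the argument amounts to careful bookkeeping of how running \texttt{Diagonalize} on $T$ differs from running it on $T_{k-1}$, and all the needed ingredients have been established in the preceding discussion.
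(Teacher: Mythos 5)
Your proposal is correct and follows essentially the same route as the paper's proof: both apply~\eqref{auxeq1} to $T_k$ and to $T_{k-1}$, plug in the identity $m^{(M)}_{k,k-1}=m^{(M)}_{k-1,k-1}-N(M)$, and use that the level-$\ell$ counts agree for $\ell\leq k-2$ (a step the paper uses implicitly when replacing $\sum_{j=0}^{k-2}m^{(M)}_{k,j}$ with $\sum_{j=0}^{k-2}m^{(M)}_{k-1,j}$, and which you spell out explicitly). The only cosmetic difference is that the paper computes $n$ directly in one chain of equalities, while you subtract the two instances of~\eqref{auxeq1}; the content is identical.
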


\begin{proof}
Let $r$ be the root of $T$. We have
    \begin{eqnarray*}
        n = \sum_{j=0}^k m^{(M)}_{k,j} &=& m^{(M)}_{k,k}+m^{(M)}_{k,k-1}+\sum_{j=0}^{k-2}m^{(M)}_{k,j}\\
%        &=& m^{(M)}_{k,k}+m^{(M)}_{k,k-1}+\sum_{j=0}^{k-2}m^{(M)}_{k-1,j}\\
        &=& m^{(M)}_{k,k}+m^{(M)}_{k-1,k-1}-N(M)+\sum_{j=0}^{k-2}m^{(M)}_{k-1,j}\\
        &=& m^{(M)}_{k,k}-N(M)+\sum_{j=0}^{k-1}m^{(M)}_{k-1,j}\\
        &=& m^{(M)}_{k,k}-N(M)+|V(T-r)|\\
        &=& m^{(M)}_{k,k}-N(M)+(n-1)
    \end{eqnarray*}
    This leads to $m^{(M)}_{k,k}=N(M)+1$.
\end{proof}

From Lemma~\ref{lemmamkk} we can obtain a lower bound on $m^{(M)}_{k,k}$, i.e., on the quantity of distinct real numbers $\lambda$ such that $\texttt{Diagonalize}(M,-\lambda)$ assigns $0$ to the root of $M$.
\begin{lemat}\label{no_zero}
    Let $T$ be a rooted tree of depth $k$ and $M\in\mathcal{S}(T)$. Then, 
    $$m^{(M)}_{k,k}= \left|\{\lambda\in\mathbb{R}:m^{(M)}_{k,k}(\lambda)>0\}\right| \geq  k+1.$$ 
\end{lemat}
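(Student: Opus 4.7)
The plan is to prove the bound by induction on the depth $k$ of $T$, first reducing the statement via Lemma~\ref{lemmamkk} to the equivalent inequality $N(M) \geq k$ for every $M \in \mathcal{S}(T)$. Note that the equality $m^{(M)}_{k,k} = |\{\lambda : m^{(M)}_{k,k}(\lambda) > 0\}|$ is immediate, since level $k$ contains only the root and therefore $m^{(M)}_{k,k}(\lambda) \in \{0,1\}$ for every real $\lambda$.

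The base case $k=0$ is trivial: $T$ consists of the single vertex $r$, and $\texttt{Diagonalize}(M,-\lambda)$ returns $d_r = m_{rr} - \lambda$, which vanishes exactly at $\lambda = m_{rr}$, so $m^{(M)}_{0,0} = 1 = k+1$. For the inductive step, suppose $k \geq 1$ and that the lemma is already known for all rooted trees of depth strictly less than $k$. Because $T$ has depth $k$, the root $r$ admits a child $u^*$ whose rooted subtree $T_{u^*}$ has depth exactly $k-1$. Applying the inductive hypothesis to $M^{*} = M[T_{u^*}]$ yields $m^{(M^{*})}_{k-1,k-1} \geq k$, i.e., there exist $k$ distinct real numbers $\lambda_1,\ldots,\lambda_k$ such that $\texttt{Diagonalize}(M^{*},-\lambda_i)$ assigns $d_{u^*} = 0$.

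To promote this into a lower bound on $N(M)$, I would observe that $T_{k-1} = T - r$ is a forest whose connected components are the branches of $T$ at $r$, one of which is $T_{u^*}$. Since Algorithm \texttt{Diagonalize} processes vertices bottom-up and only propagates information along parent--child edges within a single component, the value $d_{u^*}$ produced by $\texttt{Diagonalize}(M[T_{k-1}],-\lambda_i)$ coincides with that produced by $\texttt{Diagonalize}(M^{*},-\lambda_i)$, and thus equals zero. Consequently $m^{(M)}_{k-1,k-1}(\lambda_i) > 0$ for every $i$, which gives $N(M) \geq k$, and invoking Lemma~\ref{lemmamkk} delivers $m^{(M)}_{k,k} = N(M) + 1 \geq k+1$.

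The step I expect to require the most care is the comparison between the algorithm run on $M[T_{k-1}]$ and the one run on $M^{*}$: one has to verify that the bottom-up ordering really forces the value at $u^*$ to depend only on the descendants of $u^*$, so that the presence of the other branches at $r$ cannot perturb $d_{u^*}$. Once this independence is justified directly from the pseudocode of \texttt{Diagonalize}, everything else is a routine combination of the induction hypothesis with Lemma~\ref{lemmamkk}.
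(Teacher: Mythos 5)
Your proof is correct and follows essentially the same route as the paper: induct on the depth, pass to a child $u^*$ of the root whose subtree has depth one less, apply the inductive hypothesis to obtain $k$ values $\lambda$ for which \texttt{Diagonalize} assigns $0$ to $u^*$, observe that these carry over to the forest $T-r$ because the algorithm acts independently on connected components, conclude $N(M)\geq k$, and invoke Lemma~\ref{lemmamkk}. The paper phrases the induction as ``$k \Rightarrow k+1$'' and uses the containment $\{\lambda:m^{(M[U_1])}_{k,k}(\lambda)>0\}\subseteq\{\lambda:m^{(M)}_{k,k}(\lambda)>0\}$ rather than spelling out the component-independence argument, but the content is identical; your explicit justification of that step is a sound reading of the pseudocode.
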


Before proving Lemma~\ref{no_zero}, note that $m^{(M)}_{k,j}$ is not necessarily equal to $|\{\lambda\in\mathbb{R}:m^{(M)}_{k,j}(\lambda)>0\}|$, since $m^{(M)}_{k,j}$ counts the total number of zeros at level $j$ assigned by \texttt{Diagonalize}$(M,-\lambda)$ for all $\lambda\in \mathbb{R}$, while $\left|\{\lambda\in\mathbb{R}:m^{(M)}_{k,j}(\lambda)>0\}\right|$ is the number of distinct $\lambda \in \mathbb{R}$ such that \texttt{Diagonalize}$(M,-\lambda)$ produces some zero at level $j$. However, for a tree $T$ of depth $k$, $m^{(M)}_{k,k}= \left|\{\lambda\in\mathbb{R}:m^{(M)}_{k,k}(\lambda)>0\}\right|$, given that there is a single vertex of $T$ at level $k$, namely its root. 

\begin{proof}[Proof of Lemma~\ref{no_zero}]
    We prove by induction on $k$. For $k=0$, we have $M=[\alpha]$ for some $\alpha \in \mathbb{R}$, so that $\lambda_1=-\alpha$ is such that $m^{(M)}_{0,0} = m^{(M)}_{0,0}(-\alpha) = 1$.

    Suppose that the statement is valid for some $k\in\mathbb{N}$. Let $T$ be an $n$-vertex tree of depth $k+1$ rooted at a vertex $r$, and consider $M\in\mathcal{S}(T)$.
    Let $U_1,\ldots,U_p$ be the connected components of $T-r$ rooted, respectively, at the neighbor $u_i\in V(U_i)$ of $r$. Since $T$ has depth $k+1$, at least one of the connected components has depth $k$. Without loss of generality, let $U_1$ be a component with depth $k$.
    By definition, $\{\lambda\in\mathbb{R}:m^{(M[U_1])}_{k,k}(\lambda)>0\}\subseteq\{\lambda\in\mathbb{R}:m^{(M)}_{k,k}(\lambda)>0\}$. By the induction hypothesis on $U_1$, we have that $m^{(M[U_1])}_{k,k}\geq k+1$ and, since $B_{k}(U_1)=\{u_1\}$ contains a single vertex, if $\lambda \in \mathbb{R}$ is such that $m^{(M[U_1])}_{k,k}(\lambda)>0$, then $m^{(M[U_1])}_{k,k}(\lambda)=1$. This implies that $\left|\{\lambda\in\mathbb{R}:m^{(M[U_1])}_{k,k}(\lambda)>0\}\right|\geq k+1$, which establishes

    \begin{equation*}
    k+1\leq\left|\{\lambda\in\mathbb{R}:m^{(M[U_1])}_{k,k}(\lambda)>0\}\right|\leq N(M).
    \end{equation*}
   Lemma~\ref{lemmamkk} tells us that $m^{(M)}_{k+1,k+1} = N(M)+1\geq k+2$, which concludes the induction. 
%To prove that $\left|\{\lambda\in\mathbb{R}:m^{(M)}_{k,k}(\lambda)>0\}\right|\geq k+1$, it suffices to observe that $\{\lambda\in\mathbb{R}:m^{(M)}_{k,k}(\lambda)>0\}=\{\lambda\in\mathbb{R}:m^{(M)}_{k,k}(\lambda)=1\}$ because there is a single vertex at level $k$.
\end{proof}

We are now ready to show that $S_7^{(7)}$, $S_7^{(8)}$ and $S_7^{(9)}$ are defective. In Figure~\ref{counter_example} we depict a tree $T$ that is an unfolding of the seed $S_7^{(8)}$ (see Figure~\ref{fig:seeds_seis}) and that is not diminimal.

\begin{figure}[H]
    \centering
    \hspace{-1cm}
    \input{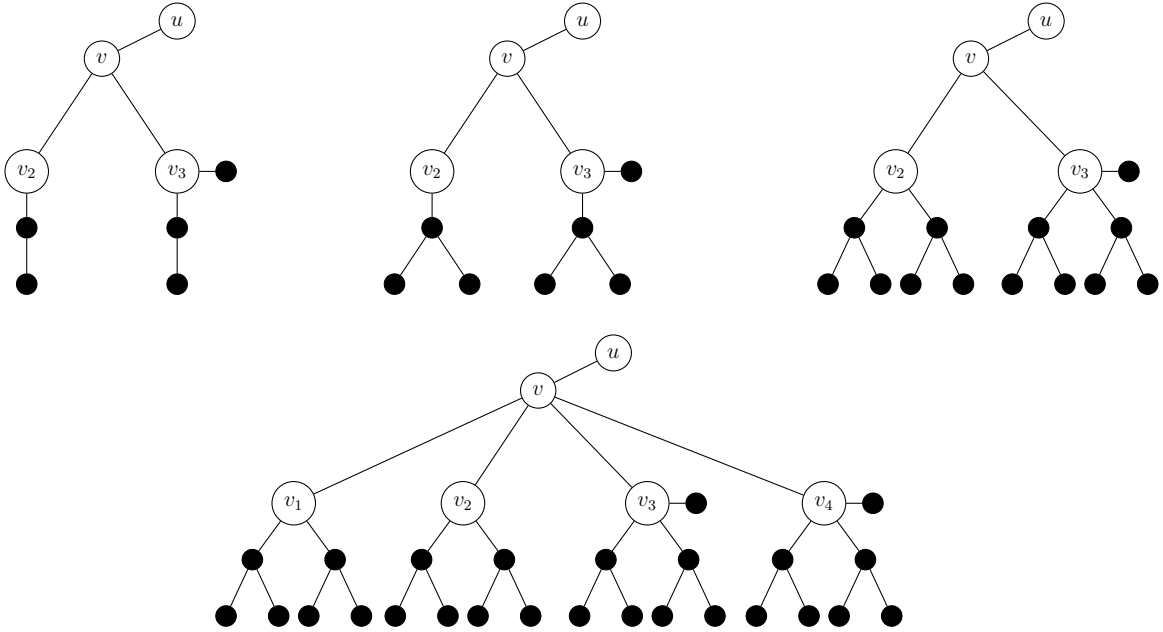}
    \caption{The seed $S_7^{(8)}$ and a sequence of diameter-preserving unfoldings that produce a tree $T$ that is not diminimal.}
    \label{counter_example}
\end{figure}

    % \begin{figure}[H]
    %     \centering
    %     \counterexample
    %     \caption{A tree $T$ that is an unfolding of the seed $S_6^{(8)}$ and is not diminimal. \color{red} [Eu acho que seria legal ter uma sequencia de figurinhas que ilustram que eh de fato um unfolding de $S_6^{(8)}$. Comecando de $S_6^{(8)}$, eu iniciaria duplicando as folhas das branches compridas, depois duplicaria as duas.] \color{black}}
    %     \label{counter_example}
    % \end{figure}

  \begin{theorem}\label{badseedS68}
      The seed $S_7^{(8)}$ is a defective seed.
  \end{theorem}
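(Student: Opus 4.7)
The plan is to argue by contradiction. Suppose $M\in\mathcal{S}(T)$ has exactly seven distinct eigenvalues $\mu_1<\cdots<\mu_7$, where $T$ is the tree in Figure~\ref{counter_example}. The goal is to force $|\Spec(M)|\ge 8$, a contradiction.

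First, I would root $T$ at the vertex $v$, so that $T$ has depth $k=3$. The four non-pendant branches at $v$ give rooted subtrees $T_{v_i}$ ($i=1,2,3,4$) of depth $2$, while the fifth branch is the singleton leaf $\{u\}$. For each $i$, Lemma~\ref{no_zero} applied to $T_{v_i}$ yields a set $\Lambda_i$ with $|\Lambda_i|\ge 3$ consisting of values of $\lambda$ for which \texttt{Diagonalize}$(M[T_{v_i}],-\lambda)$ assigns zero to $v_i$. Since the algorithm processes $T_{v_i}$ using only information internal to the subtree up through $v_i$'s processing, the same $\Lambda_i$ characterizes when $d_{v_i}=0$ immediately after $v_i$ is processed in the full-tree execution \texttt{Diagonalize}$(M,-\lambda)$. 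Moreover, $\lambda\in\Lambda_i$ forces $v_i$ to have entered the ``if''-branch (the ``else''-branch would give $d_{v_i}\neq 0$), so the edge $\{v_i,v\}$ is retained into the processing of $v$.

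Applying Lemma~\ref{lemmamkk} with $k=3$ gives
\[
m^{(M)}_{3,3}=N(M)+1=\Bigl|\{-m_{uu}\}\cup\bigcup_{i=1}^{4}\Lambda_i\Bigr|+1.
\]
Because $d_v=0$ at the end of \texttt{Diagonalize}$(M,-\lambda)$ implies $\lambda\in\Spec(M)$, we obtain $m^{(M)}_{3,3}\le 7$ and hence $\bigl|\bigcup_i\Lambda_i\bigr|\le 6$. Combined with $\sum_i|\Lambda_i|\ge 12$, a pigeonhole argument yields $|\{\lambda:r(\lambda)\ge 2\}|\ge 2$, where $r(\lambda)=|\{i:\lambda\in\Lambda_i\}|$. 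For every such $\lambda$, the full-tree run triggers the ``else''-branch at $v$, which consumes exactly one of the $r(\lambda)$ zero children and leaves $r(\lambda)-1\ge 1$ zeros at level $2$ in the final diagonal; Theorem~\ref{thm_localizacao} then gives $\lambda\in\Spec(M)$, and such $\lambda$ is necessarily disjoint from $\{\lambda:d_v=0\}$, which requires the ``if''-branch at $v$.

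To strengthen the count from $|\Spec(M)|\ge 6$ to the desired $|\Spec(M)|\ge 8$, I would apply Lemma~\ref{no_zero} recursively to the depth-$1$ sub-subtrees rooted at the non-leaf level-$1$ vertices of $T$: for each such vertex $c$ (a child of some $v_i$), one obtains a set $\Lambda_c$ with $|\Lambda_c|\ge 2$. The key structural identity is $\Lambda_c\cap\Lambda_i=\emptyset$, because $d_c=0$ forces $v_i$ into the ``else''-branch and hence $d_{v_i}\neq 0$; and any $\lambda$ simultaneously lying in $\Lambda_c$ and $\Lambda_{c'}$ for two distinct children $c,c'$ of the same $v_i$ leaves a zero at level $1$ of the final diagonal, forcing $\lambda\in\Spec(M)$. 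An enumeration of the possible overlap patterns among the sets $\Lambda_i$ and $\Lambda_c$, bounded above by $|\Spec(M)|=7$ at each stage, together with $\sum_\lambda m_M(\lambda)=32$ and the simplicity of $\mu_1,\mu_7$ from Theorem~\ref{thm:simpleroots}, produces the desired contradiction.

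The main obstacle is precisely this final bookkeeping: the interaction between the ``if''/``else''-branching at each of the three levels above the leaves must be coordinated with the global cap on the number of distinct eigenvalues. The combinatorial intricacy of this step is exactly what the purported extensions of the $S_7^{(8)}$-realization in~\cite{JSdiminimal} fail to accommodate, and it is the source of the defectiveness of $S_7^{(8)}$.
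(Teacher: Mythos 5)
The opening of your proposal is sound and essentially parallels the paper's strategy: root $T$ at $v$, apply Lemma~\ref{no_zero} to the depth-$2$ branches $T_{v_i}$ to obtain sets $\Lambda_i$ of size at least $3$, apply Lemma~\ref{lemmamkk} with $k=3$ to bound $\bigl|\bigcup_i \Lambda_i\bigr|$, and observe that repeated values across the $\Lambda_i$'s force additional eigenvalues via the ``else''-branch at $v$. Your structural observations about the $\Lambda_c$'s and the disjointness $\Lambda_c\cap\Lambda_i=\emptyset$ are also correct. The problem is the final step, which you explicitly defer to ``an enumeration of the possible overlap patterns \ldots\ together with $\sum_\lambda m_M(\lambda)=32$''. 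This is precisely where a genuine gap lies, and it is not a matter of intricate but routine bookkeeping.

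Specifically, a combinatorial enumeration of zero-location patterns (which $\lambda$'s produce zeros at which levels, and in how many branches) does \emph{not} suffice to reach a contradiction. The paper's proof, after pinning down that the five interior eigenvalues $\lambda_1<\cdots<\lambda_5$ must coincide with $\sigma(M[\overline{T}])$ and exactly three of them must produce a zero at each branch root, still has to distinguish the branches $T_1,T_2$ (isomorphic to one depth-$2$ tree on $7$ vertices) from $T_3,T_4$ (isomorphic to a different one on $8$ vertices). It does so by computing traces of principal submatrices: for $T_1$ the forced multiplicity list $(3,1,1,1,1)$ yields $\lambda_2+\lambda_4=\lambda_1+\lambda_5$ (equation~\eqref{eq:rest1}), whereas for $T_3$ the forced list $(3,2,1,1,1)$ yields the incompatible constraint $\lambda^\ast+\lambda_2=\lambda_1+\lambda_5$ with $\lambda^\ast\neq\lambda_4$ (or the symmetric version). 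These are \emph{algebraic} relations among the eigenvalues, not consequences of the combinatorics of where zeros appear; the shapes of $T_1$ and $T_3$ produce the \emph{same} counts of zero-locations for the parameters you track, so your pigeonhole framework cannot separate them. Without a trace-type argument (or some other algebraic input beyond eigenvalue location), the enumeration you describe will not terminate in a contradiction, and your sketch cannot be completed as written.
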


  \begin{proof}
Consider the tree $T$ of diameter $7$ depicted in Figure~\ref{counter_example}, viewed as a tree rooted at vertex $v$. Let $v_i$, for $i\in [4]=\{1,2,3,4\}$, be the (non-leaf) children of $v$, as they appear in Figure~\ref{counter_example}. Let $T_i$ be the connected component of $T-v$ that contains $v_i$, viewed as a rooted tree with root $v_i$, for $i\in [4]$. Let $u$ be the leaf connected to $v$. Let $\overline{T}=T_1\cup T_2\cup T_3\cup T_4$, so that $V(\overline{T})=V-\{u,v\}$.

Let $M\in\mathcal{S}(T)$. For brevity, we write $\sigma(M)=\Spec(M)$ in this proof. For any $\lambda\in\sigma(M)$, let $L(M,\lambda)$ be the distance between the main root and the closest vertex to the main root for which \texttt{Diagonalize}$(M,-\lambda)$ assigns value $0$. We also define $\sigma_j=\{\lambda\in\sigma(M[\overline{T}]):L(M[\overline{T}],\lambda)=j\}$. We start with a claim about the eigenvalues of $M$.
            
\begin{claim}\label{claim1}
The following are true:
\begin{enumerate}
    \item [(i)] If $\lambda \in \sigma(M)$ does not lie in $\sigma(M[\overline{T}])$, then $m_M(\lambda)=1$.
    \item [(ii)] If $\lambda\in\sigma_j$ for $j>0$, then $\lambda\in\sigma(M)$ with $m_M(\lambda)=m_{M[\overline{T}]}(\lambda)$ or $m_M(\lambda)=m_{M[\overline{T}]}(\lambda)+1$.
    \item [(iii)] If $\lambda\in\sigma_0$, then $m_M(\lambda)=m_{M[\overline{T}]}(\lambda)$ if $m_{uu}=\lambda$ and $m_M(\lambda)=m_{M[\overline{T}]}(\lambda)-1$ otherwise.
\end{enumerate}
\end{claim}

\begin{proof}[Proof of Claim]
Consider an application of $\texttt{Diagonalize}(M,-\lambda)$ using an ordering for which $u$ and $v$ are the last two vertices. Before processing $u$ and $v$, each remaining vertex $w$ is assigned the same value $d_w$ that would be assigned to it by $\texttt{Diagonalize}(M[T_i],-\lambda)$ for the corresponding component $T_i$. 

For item $(i)$, suppose that $\lambda \notin \sigma(M[\overline{T}])$, then all values assigned by $\texttt{Diagonalize}(M,-\lambda)$ to all vertices of $\overline{T}$ is nonzero. By Theorem~\ref{thm_localizacao}(c) and given that $\lambda \in \sigma(M)$, we have $1 \leq m(\lambda) \leq 2$. Moreover, $m_M(\lambda)=2$ if and only if $d_u$ and $d_v$ are both zero at the end of the algorithm. This is not possible, as any vertex for which a child has value 0 is assigned a negative value by the algorithm. 

In the item $(ii)$, given that $\lambda\in \sigma_j$ with $j>0$, there are vertices $w \in V(\overline{T})$ for which $d_w=0$, but these vertices are not $v_1,\ldots,v_4$. As a consequence, the values assigned to these vertices will not change while processing $u$ and $v$. As explained in the previous paragraph, the algorithm may not assign value 0 to both $u$ and $v$. By Theorem~\ref{thm_localizacao}(c), $m_T(\lambda) \in \{m_{\overline{T}}(\lambda),m_{\overline{T}}(\lambda)+1\}$. 

For the item $(iii)$ we suppose that $\lambda\in \sigma_0$. Then, there are vertices $w \in V(\overline{T})$ for which $d_w=0$, at least one of which is $v_1,\ldots,v_4$. Next we process the leaf $u$, which has value $d_u=m_{uu}-\lambda$.

If $m_{uu}=\lambda$, we have $d_u=0$. While processing $v$, the value of one of its children (say $d_u$) is redefined as 2, and $d_v$ is assigned a negative number. Thus no new zero is produced, and $m_M(\lambda)=m_{M[\overline{T}]}(\lambda)$ by Theorem~\ref{thm_localizacao}(c).

If $m_{uu}\neq \lambda$, we have $d_u\neq 0$. While processing $v$, the value of one of the $v_i$ for which $d_{v_i}=0$) is redefined as 2, and $d_v$ is assigned a negative number. This means that the number of zero values decreases by 1 and $m_M(\lambda)=m_{M[\overline{T}]}(\lambda)-1$ by Theorem~\ref{thm_localizacao}(c). 
\end{proof}

Coming back to our proof, let $\lambda_{\min}$ and $\lambda_{\max}$ denote the minimum and the maximum eigenvalue of $M$, respectively. By Theorem~\ref{thm:simpleroots}, we know that their multiplicity in $M$ is 1 and that $\lambda_{\min},\lambda_{\max}\notin\sigma(M[\overline{T}])$. 

Since $T_i$ has diameter $5$ for each $i \in [4]$, we know that $M[T_i]$ has at least five distinct eigenvalues. Moreover, by Lemma~\ref{no_zero}, given that $T_i$ has depth 2, there exist at least three distinct real numbers $\mu_1,\mu_2,\mu_3$ such that $L(M[T_i],\mu_\ell)=0$ for all $\ell\in [3]$. 
%{\color{red}Notice that there could exists more than $3$ real numbers $\mu_1,\mu_2,\mu_3$ but there is at least this $3$}. 
Any eigenvalue of $M[T_i]$ that is not an eigenvalue of $M$ is called a \emph{missing eigenvalue} of branch $T_i$.

To reach a contradiction, suppose that $|\DSpec(M)|=7$. Let the distinct eigenvalues of $M$ be $\lambda_{\min}<\lambda_1<\lambda_2<\lambda_3<\lambda_4<\lambda_5<\lambda_{\max}$. Our proof will proceed as follows. First we show that $\lambda_1,\ldots,\lambda_5$ must be eigenvalues of $\sigma(M[\overline{T}])$. Then, we show that the set $\{\lambda_1,\ldots,\lambda_5\}$ is precisely $\sigma(M[\overline{T}])$. Finally we obtain a system of equations involving the eigenvalues $\lambda_j$ for the different components $T_i$. 
The desired contradiction comes from the fact that the system is infeasible.

It is important to observe that $|V(\overline{T})|=n-2=m_{M}(\lambda_1)+m_{M}(\lambda_2)+m_{M}(\lambda_3)+m_{M}(\lambda_4)+m_{M}(\lambda_5)$. If each branch $T_i$ has $s_i$ missing eigenvalues, we get
\begin{equation}\label{eq30}
\sum_{\lambda\in\sigma(M[\overline{T}])\cap\sigma(M)}m_{M[\overline{T}]}(\lambda)=|V(\overline{T})|-(s_1+\cdots+s_4),
\end{equation}
where the equality comes from the fact that the multiplicity of any missing eigenvalue must be exactly $1$ in $\overline{T}$, because if for a missing eigenvalue $\lambda$, $m_{M[\overline{T}]}(\lambda)\geq2$, then, by Claim~\ref{claim1}(i), we have $\lambda\in\sigma(M)$.

Furthermore, since $\lambda_{\min},\lambda_{\max}\notin\sigma(M[\overline{T}])$, there are at most five distinct values that are in $\sigma_j$, for some $j\geq 0$ and in $\sigma(M)$, that is, $\left|\sigma(M)\cap\sigma(M[\overline{T}])\right|\leq 5$. Let $B=\{\lambda_1,\lambda_2,\lambda_3,\lambda_4,\lambda_5\}\setminus\sigma(M[\overline{T}])$ and $j=|B|$. 
% If $B$ is nonempty, we assume without loss of generality that $B=\{\lambda_1,\ldots,\lambda_j\}$ {\color{red} como $\lambda_1<\lambda_2<\lambda_3$ o SPG pode?}. 
We observe that any $\lambda_\ell \in B$ must be a simple eigenvalue of $M$ by Claim~\ref{claim1}(i).

Note that
\begin{eqnarray}\label{eq31}    
        |V(\overline{T})| 
        &=& m_{M}(\lambda_1)+m_{M}(\lambda_2)+m_{M}(\lambda_3)+m_{M}(\lambda_4)+m_{M}(\lambda_5)\nonumber\\\nonumber
        &=& \sum_{\lambda\in B} m_{M}(\lambda) + \sum_{\lambda\in\sigma_0\cap\sigma(M)} m_{M} (\lambda) + \sum_{\lambda\in\sigma_{\geq1}\cap\sigma(M)} m_{M} (\lambda)\\
        &=& j + \sum_{\lambda\in\sigma_0\cap\sigma(M)} m_{M} (\lambda) + \sum_{\lambda\in\sigma_{\geq1}\cap\sigma(M)} m_{M} (\lambda)
\end{eqnarray}
Claim~\ref{claim1}(ii) implies that
$$\sum_{\lambda\in\sigma_{\geq1}\cap\sigma(M)} m_{M} (\lambda) \leq \sum_{\lambda\in\sigma_{\geq1}} [m_{M[\overline{T}]} (\lambda)+1].$$
By Claim~\ref{claim1}(iii), we have
$$\sum_{\lambda\in\sigma_0\cap\sigma(M)} m_{M} (\lambda) \leq \delta_{u} + \sum_{\lambda\in\sigma_0\cap\sigma(M)} (m_{M[\overline{T}]}(\lambda)-1),$$
where $\delta_{u}=1$ if $m_{uu}$ lies in the set $\{\lambda \colon \lambda \in \sigma_0\cap\sigma(M)\}$ and $\delta_{u}=0$ otherwise.

As a consequence, \eqref{eq31} leads to
\begin{eqnarray}
        |V(\overline{T})|&\leq& j+\delta_u + \sum_{\lambda\in\sigma(M[\overline{T}])\cap\sigma(M)}m_{M[\overline{T}]}(\lambda) - |\sigma_0\cap\sigma(M)| +|\sigma_{\geq1}\cap\sigma(M)| \nonumber \\
        &\leq& j+\delta_u + |V(\overline{T})| - (s_1+\cdots+s_4) - |\sigma_0\cap\sigma(M)| +|\sigma_{\geq1}\cap\sigma(M)|. \label{eq32}
        \end{eqnarray}

Summing $2|\sigma_{0}\cap\sigma(M)|$ to both sides of \eqref{eq32} and rearranging the terms, we get
\begin{equation}\label{eq333}
|\sigma_{0}\cap\sigma(M)|+|\sigma_{\geq1}\cap\sigma(M)| \geq 2|\sigma_0\cap\sigma(M)| + (s_1+\cdots+s_4)-j-\delta_u.
\end{equation}        
Moreover, given that $\delta_u\leq 1$, we have that
\begin{equation}\label{eq33}
|\sigma_{0}\cap\sigma(M)|+|\sigma_{\geq1}\cap\sigma(M)| \geq 2|\sigma_0\cap\sigma(M)| + (s_1+\cdots+s_4)-j-1.
\end{equation}
\begin{claim}\label{claim4}
 The set $B=\{\lambda_1,\ldots,\lambda_5\}\setminus \sigma(M[\overline{T}])$ is empty.
 \end{claim} 

\begin{proof}[Proof of Claim] Towards a contradiction, assume that $j\geq 1$, so that $|\sigma(M)\cap\sigma(M[T_i])|\leq 5-j<5$. In particular, because $M[T_i]$ has at least five distinct eigenvalues for each branch $T_i$, we have $s_i \geq j$ for each $i$, so that~\eqref{eq33} becomes
\begin{equation}\label{eq34}
|\sigma_0\cap\sigma(M)|+|\sigma_{\geq1}\cap\sigma(M)| \geq 2|\sigma_0\cap\sigma(M)| + 3j -1.
\end{equation} 
Since $|\sigma_0\cap\sigma(M)|+|\sigma_{\geq1}\cap\sigma(M)|=|\sigma(M)\cap\sigma(M[T_i])|= 5-j$,~\eqref{eq34} cannot hold for $j\geq 2$, so assume that $j \leq 1$. Because we are also assuming $j \geq 1$, inequality \eqref{eq34} is equivalent to
\begin{equation}\label{eq35}
5>4=5-j=|\sigma_0\cap\sigma(M)|+|\sigma_{\geq1}\cap\sigma(M)| \geq 2|\sigma_0\cap\sigma(M)| + 2.
\end{equation}
For \eqref{eq35} to hold, we must have $|\sigma_0\cap\sigma(M)|\leq 1$.

By Lemma~\ref{no_zero}, since each $T_i$ has depth 2, there must be at least three values $\gamma$ such that $L(M[T_i],\gamma)=0$. So, for each $T_i$, there are at least two values in $\sigma(M[T_i]) \setminus \sigma(M)$, that is, $s_i \geq 2$ for each $i$, so that~\eqref{eq33} leads to
\begin{equation*}
5>|\sigma_0\cap\sigma(M)|+|\sigma_{\geq1}\cap\sigma(M)| \geq 2|\sigma_0\cap\sigma(M)| + 6,
\end{equation*}
a contradiction. 
\end{proof}

At this point, we know that $B=\emptyset$ and so $\{\lambda_{1}\ldots,\lambda_{5}\}\subseteq\sigma(M[\overline{T}])$. Our focus now is to show that $\{\lambda_{1}\ldots\lambda_{5}\}=\sigma(M[\overline{T}])$. Moving further, since $B=\emptyset$, the inequality~\eqref{eq33} becomes
\begin{equation}\label{eq36}
5 \geq |\sigma_0\cap\sigma(M)| +|\sigma_{\geq1}\cap\sigma(M)| \geq 2|\sigma_0\cap\sigma(M)| + (s_1+\cdots+s_4)-1.
\end{equation}
As in the previous paragraph, if $|\sigma_0\cap\sigma(M)|\leq 1$, we get $s_i \geq 2$ for every $i$, and~\eqref{eq36} leads to
$$5 \geq |\sigma_0\cap\sigma(M)| + |\sigma_{\geq1}\cap\sigma(M)| \geq 2 |\sigma_0\cap\sigma(M)|+8-1,$$ 
which is impossible.

Thus $|\sigma_0\cap\sigma(M)|\geq 2$. If $|\sigma_0\cap\sigma(M)|= 2$, we again get $s_i \geq 1$ for every $i$, and 
~\eqref{eq36} leads to
$$5 \geq |\sigma_0\cap\sigma(M)| +|\sigma_{\geq1}\cap\sigma(M)| \geq 2|\sigma_0\cap\sigma(M)|+4-1 \geq 7,$$ 
another contradiction. As a consequence, we must have $|\sigma_0\cap\sigma(M)|\geq 3$, so that $|\sigma_0\cap\sigma(M)|=3$ and
$|\sigma_{\geq 1} \cap\sigma(M)|=2$ because~\eqref{eq36} leads to
$$2 \geq 5- |\sigma_0\cap\sigma(M)| \geq  |\sigma_{\geq1}\cap\sigma(M)| \geq |\sigma_0\cap\sigma(M)| -1 \geq 2.$$

Next we claim that $\sigma_0=\sigma_0 \cap \sigma(M)$. If this is not the case, one of the eigenvalues in $\sigma_0$ must be missing at some branch $T_i$, so that $s_1+\cdots+s_4 \geq 1$. Inequality~\eqref{eq36} becomes $5 \geq 2 \cdot 3 + 1 - 1=6$, another contradiction.

Algorithm \texttt{Diagonalize} ensures that any eigenvalue of $\sigma_{\geq 1}$ must be in $\sigma(M)$. So, we are in the case where the eigenvalues of $M(\overline{T})$ are precisely $\{\lambda_1,\ldots,\lambda_5\}$, and exactly three of them are in $\sigma_0$. In particular, $\DSpec(M[T_i])=\{\lambda_1,\ldots,\lambda_5\}$ and $\{\lambda \colon L(M[T_i],\lambda)=0\}=\{\lambda_1,\dots,\lambda_5\} \cap \sigma_0$ for every $i \in [4]$.   
% {\color{red} acabou de mostrar que nao tem missing eigenvalues,ou seja, $\sigma(M[\overline{T}])=\{\lambda_{1},\ldots,\lambda_{5}\}$}

We shall now consider the trees $T_i$, $i \in [4]$, starting with $T_1$ and $T_2$, which are isomorphic. These are trees with seven vertices, and we refer to each vertex as in Figure~\ref{T1} (on the left).

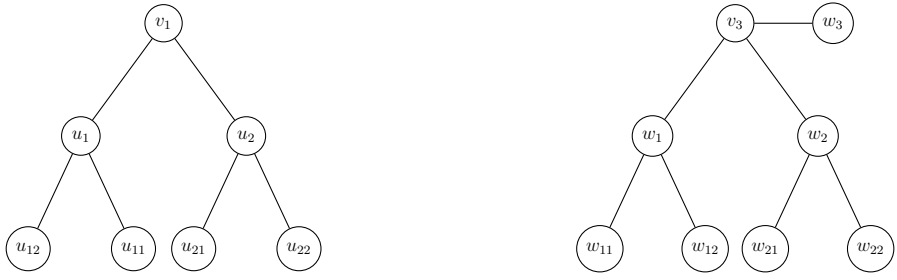
\begin{figure}
\centering
\begin{tikzpicture}[scale=2,auto=left,every node/.style={circle,scale=0.6}]

\path    
     (-1   ,-0.5)node[shape=circle,draw,fill=white] (0) {$v_1$}
     (-1.55,-1.25)node[shape=circle,draw,fill=white] (01) {$u_{1}$}
     (-0.45,-1.25)node[shape=circle,draw,fill=white] (02) {$u_{2}$}
     (-1.2 ,-2)node[shape=circle,draw,fill=white]    (011) {$u_{11}$}
     (-1.9   ,-2)node[shape=circle,draw,fill=white]  (012) {$u_{12}$}
     (-0.1    ,-2)node[shape=circle,draw,fill=white] (021) {$u_{22}$}
     (-0.8 ,-2)node[shape=circle,draw,fill=white]    (022) {$u_{21}$}
     
      (-1   +3.8,-0.5)node[shape=circle,draw,fill=white] (3) {$v_3$}
     (-0.35 +3.8,-0.5)node[shape=circle,draw,fill=white]  (30) {$w_{3}$}
     (-1.55 +3.8,-1.25)node[shape=circle,draw,fill=white] (31) {$w_{1}$}
     (-0.45 +3.8,-1.25)node[shape=circle,draw,fill=white] (32) {$w_{2}$}
     (-1.2  +3.8,-2)node[shape=circle,draw,fill=white]    (311) {$w_{12}$}
     (-1.9  +3.8  ,-2)node[shape=circle,draw,fill=white]  (312) {$w_{11}$}
     (-0.1  +3.8   ,-2)node[shape=circle,draw,fill=white] (321) {$w_{22}$}
     (-0.8  +3.8,-2)node[shape=circle,draw,fill=white]    (322) {$w_{21}$};

      \draw[-](0)--(01);
      \draw[-](0)--(02);
      \draw[-](01)--(011);
      \draw[-](01)--(012);
      \draw[-](02)--(021);
      \draw[-](02)--(022);

      \draw[-](3)-- (30);
      \draw[-](3)-- (31);
      \draw[-](3)-- (32);
      \draw[-](31)--(311);
      \draw[-](31)--(312);
      \draw[-](32)--(321);
      \draw[-](32)--(322);

     \end{tikzpicture}
            \caption{Notation for $T_1$ and $T_2$ (on the left) and for $T_3$ and $T_4$ (on the right).}
            \label{T1}
        \end{figure}
        
    For $T_1$ and $T_2$, the only possible multiplicity lists are $(3,1,1,1,1)$ and $(2,2,1,1,1)$. We will focus on $T_1$. First, consider the list $(2,2,1,1,1)$. Let $\lambda$ be an eigenvalue with multiplicity $2$. At the end of Diag$(M[T_i],-\lambda)$, we must have two zeros in the diagonal of the output matrix. Given that a vertex and its parent cannot both be assigned 0, the possibilities are:
        \begin{itemize}
            \item[(i)] The leaves $u_{11}$, $u_{12}$, $u_{21}$ and $u_{22}$ start with $0$ and, when their parents are processed, two zeros become 2 and the other two remain zero. For this to happen, the entries corresponding to $u_{11}$, $u_{12}$, $u_{21}$ and $u_{22}$ in the diagonal of $M$ must have value $\lambda$. The entry corresponding to $v_1$ cannot be $\lambda$.
            
            \item[(ii)] $u_{11}$, $u_{12}$ start with $0$ and, when their parent is processed, we keep one zero. For this to happen, the entries corresponding to $u_{11}$ and  $u_{12}$ in the diagonal of $M$ must have value $\lambda$, while the entry corresponding to $u_{21}$ or $u_{22}$ is not $\lambda$. The other zero must come from the root $v_1$, as the algorithm assigns $-1/2$ to $u_1$. If any vertex other than $v_1$ in the branch of $u_2$ had been assigned value 0, this value would change while processing its parent.
            
            \item[(iii)] $u_{21}$, $u_{22}$ start with $0$ and, when their parent is processed, we keep one zero. The other zero must come from the root $v_1$. The reasoning for this is analogous to the previous case. 
        \end{itemize}

        Note that, if $(i)$ happens, $(ii)$ and $(iii)$ cannot happen. Moreover, $(i)$ can happen for at most one eigenvalue. Since we need two eigenvalues with multiplicity two, $(i)$ cannot happen. Similarly, case $(ii)$ cannot happen for two different eigenvalues, nor can case $(iii)$. The only additional possibility would be for $(ii)$ and $(iii)$ to happen together, each for a different eigenvalue. By Theorem~\ref{thm:simpleroots}, the least and the greatest eigenvalue of $M[T_1]$ are simple and the zero produced by the algorithm must appear at the root. This forces $|\sigma_0| \geq 4$, contradicting the fact that $|\sigma_0|=|\sigma_0\cap \sigma(M)|=3$.

        Next, consider the list $(3,1,1,1,1)$. Assume that the eigenvalues are ordered as $\lambda_1<\cdots<\lambda_5$. There is a single way for an eigenvalue $\lambda$ to have multiplicity $3$ in $T_1$. The leaves $u_{11}$, $u_{12}$, $u_{21}$ and $u_{22}$ must start with $0$ and, when their parents are processed, two zeros become 2 and the other two remain zero (and the parents are assigned the value $-1/2$). For this to happen, the entries corresponding to $u_{11}$, $u_{12}$, $u_{21}$ and $u_{22}$ in the diagonal of $M$ must have value $\lambda$. The additional zero comes from the root $v_1$ if the diagonal has value $\lambda$.

This means that $L(M[T_1],\lambda)=0$. Moreover, since the algorithm produces two negative diagonal values and two positive diagonal values, we conclude that $\lambda=\lambda_3$. Addionally, the minimum and maximum eigenvalues $\lambda_1$ and $\lambda_5$ of $T_1$ are both simple and the zero associated with them appears at the root.  

By our previous discussion, the other two eigenvalues of $M[T_1]$, namely $\lambda_2$ and $\lambda_4$, must satisfy $L(M[T_1],\lambda_2)=L(M[T_1],\lambda_4)=1$. Note that we cannot produce any additional zero at the leaves, as all of them are zero for $\lambda_3$. So, the algorithm must produce 0 for both $u_1$ and $u_2$ (given that one of the zeros becomes 2 when processing $v_1$).

Now we use the fact that the sum of the diagonal of a matrix is equal to the sum of its eigenvalues, to find a constraint. On one hand, the trace of $M[T_1]$ is equal to  $\tr(M[T_1])=\lambda_1+\lambda_2+3\lambda_3+\lambda_4+\lambda_5$. On the other hand, it is equal to $\tr(M[v_1])+\tr(M[u_1,u_{11},u_{12}])+\tr(M[u_2,u_{21},u_{22}])=\lambda_3+(\lambda_2+\lambda_3+\lambda_4)+(\lambda_2+\lambda_3+\lambda_4)$, where the notation $M[x_1,\ldots,x_m])$ stands for the $m\times m$ submatrix of $M$ relative to the rows and columns associated with $x_1,\ldots,x_m$. This leads to the following equation
\begin{equation}\label{eq:rest1}
\lambda_2 + \lambda_4 = \lambda_1+\lambda_5.
\end{equation}

Given that $T_2$ is isomorphic to $T_1$, the matrix $M[T_2]$ must have the same structure as $M[T_1]$.

Now, for $T_3$ and $T_4$, the possible multiplicity lists are $(4,1,1,1,1)$, $(3,2,1,1,1)$ and $(2,2,2,1,1)$. As with $T_1$, we will consider what may happen to get multiplicity 2 as we apply $\texttt{Diagonalize}(M[T_3],-\lambda)$ (we use the notation in Figure~\ref{T1} (on the right)):
        \begin{itemize}
            \item[(i)] The leaves $w_{11}$, $w_{12}$, $w_{21}$ and $w_{22}$ start with $0$ and, when their parents are processed, two zeros become 2 and the other two remain zero. For this to happen, the entries corresponding to $w_{11}$, $w_{12}$, $w_{21}$ and $w_{22}$ in the diagonal of $M$ must have value $\lambda$. An additional 0 may only appear for $v_3$, and it appears if and only if $d_{w_3}=m_{w_3w_3}-\lambda \neq 0$ and $m_{v_3v_3}-\lambda-(m_{v_3w_{3}})^2/d_{w_3}=0$.
            
            \item[(ii)] $w_{11}$, $w_{12}$ start with $0$ and, when their parent is processed, we keep one of them. For this to happen, the entries corresponding to $w_{11}$ and  $w_{12}$ in the diagonal of $M$ must have value $\lambda$, while the entry corresponding to $w_{21}$ or $w_{22}$ is not $\lambda$. There are two possibilities for the additional 0. The first possibility is $w_2$ or $w_3$, which happens if and only if $d_{w_3}=m_{w_3w_3}-\lambda = 0$, $d_{w_{21}},d_{w_{22}}\neq 0$ and $m_{w_2w_2}-\lambda-(m_{w_2w_{21}})^2/d_{w_{21}}-(m_{w_2w_{22}})^2/d_{w_{22}}=0$. The second possibility is $v_3$, which happens if and only if $d_{w_{2}},d_{w_{3}}\neq 0$ and $m_{v_3v_3}-\lambda-(m_{v_3w_{2}})^2/d_{w_{2}}-(m_{v_3w_3})^2/d_{w_{3}}=0$. No additional zero could happen in this case.
            
            \item[(iii)] $w_{21}$, $w_{22}$ start with $0$ and, when their parent is processed, we keep one zero. The other zero must come from one of the vertices in $\{w_1,w_3\}$ or from $v_3$. No additional zero could happen in this case. The reasoning for this is analogous to the previous case. 

           \item[(iv)] $d_{w_{11}},d_{w_{12}},d_{w_{21}},d_{w_{22}} \neq 0$ and the algorithm assigns 0 to $w_1$, $w_2$ and $w_3$. Two of the zeros remain after processing $v_1$. This happens if and only if $m_{w_1w_1}-\lambda-(m_{w_1w_{11}})^2/d_{w_{11}}-(m_{w_1w_{12}})^2/d_{w_{12}}=0$, $m_{w_2w_2}-\lambda-(m_{w_2w_{21}})^2/d_{w_{21}}-(m_{w_2w_{22}})^2/d_{w_{22}}=0$, and $d_{w_3}=m_{w_3w_3}-\lambda = 0$. No additional zero could happen in this case.
        \end{itemize}

First note that the list $(4,1,1,1,1)$ cannot happen because the multiplicity of any eigenvalue $\lambda$ is at most 3. Moreover, it is clear that (i) may each hold for at most one value of $\lambda$, and, if (i) holds, (ii) and (iii) cannot happen for any value of $\lambda$.

Item (ii) can only hold for a single value of $\lambda$. Moreover, if both (ii) and (iv) hold (for distinct values $\lambda$ and $\lambda'$, respectively), then the second $0$ associated with $\lambda$ must happen at the root $v_1$. The same applies to (iii). Moreover, if both (ii) and (iii) hold, the second $0$ associated with at least one of them must happen at the root. Finally, item (iv) may hold for a single value of $\lambda$.

We claim that $(2,2,2,1,1)$ is not a feasible multiplicity list. Since we need three eigenvalues with multiplicity 2, case (i) cannot happen, as it precludes (ii) and (iii). Therefore (ii), (iii) and (iv) must hold for the eigenvalues $\lambda_2$, $\lambda_3$ and $\lambda_4$, given that the least and the greatest eigenvalue of $M[T_3]$ are simple. However, this means that $\lambda_1$, $\lambda_5$ and the two eigenvalues that appear in cases (ii) and (iii) have the property that 0 appears at the root. This contradicts the fact that there are only three eigenvalues with this property.

This means that the multiplicity list must $(3,2,1,1,1)$, so that cases (i) and (iv) must both happen, and (i) must have a 0 at the root. Assume that the eigenvalues associated with (i) and (iv) are $\lambda$ and $\lambda'$, respectively. Because (i) applies, $m_{w_{11}w_{11}}=m_{w_{12}w_{12}}=m_{w_{21}w_{21}}=m_{w_{22}w_{22}}=\lambda$, while (iv) gives $m_{w_3w_3}=\lambda'$. 
%By (iv),
%\begin{equation}\label{eq37}
%m_{w_1w_1}-\left(\frac{(m_{w_1w_{11}})^2}{\lambda-\lambda'}+\frac{(m_{w_1w_{12}})^2}{\lambda-\lambda'}\right)=m_{w_2w_2}-\left(\frac{(m_{w_2w_{21}})^2}{\lambda-\lambda'}+\frac{(m_{w_2w_{22}})^2}{\lambda-\lambda'}\right)=0.
%\end{equation}

First assume that $\lambda<\lambda'$. When applying $\texttt{Diagonalize}(M,-\lambda)$, we get three 0's, two 2's, two $-1/2$'s and $d_{w_3}=m_{w_3w_3}-\lambda=\lambda'-\lambda>0$. This means that $\lambda=\lambda_3$ and $\lambda'=\lambda_4$. In this case, the trace of $M[T_3]$ satisfies  
$\tr(M[T_3])=\lambda_1+\lambda_2+3\lambda_3+2\lambda_4+\lambda_5$. On the other hand, it is equal to $\tr(M[v_3,w_3])+\tr(M[w_1,w_{11},w_{12}])+\tr(M[w_2,w_{21},w_{22}])$. Consider $M[w_1,w_{11},w_{12}]$. When we apply $\texttt{Diagonalize}(M[w_1,w_{11},w_{12}],-\lambda)$ and when we apply $\texttt{Diagonalize}(M[w_1,w_{11},w_{12}],-\lambda'),$ we get one 0 for each, so that $\lambda_3$ and $\lambda_4$ are eigenvalues. The other eigenvalue is some number $\lambda^\ast$ that satisfies
$$m_{w_1w_1}-\lambda^\ast=\frac{(m_{w_1w_{11}})^2+(m_{w_1w_{12}})^2}{\lambda-\lambda^{\ast}}.$$
Similarly, when $\texttt{Diagonalize}(M[w_2,w_{21},w_{22}],-\lambda)$ or $\texttt{Diagonalize}(M[w_2,w_{21},w_{22}],-\lambda')$ is applied, we get one 0 for each, so that $\lambda_3$ and $\lambda_4$ are eigenvalues. The other eigenvalue is some number $\lambda^{\ast\ast}$ that satisfies
$$m_{w_2w_2}-\lambda^{\ast\ast}=\frac{(m_{w_2w_{12}})^2+(m_{w_2w_{22}})^2}{\lambda-\lambda^{\ast\ast}}.$$

\begin{claim}\label{claim5}
It holds that $\lambda^\ast= \lambda^{\ast \ast}$.
\end{claim}

\begin{proof}[Proof of Claim]
Towards a contradiction, assume that $\lambda^\ast\neq \lambda^{\ast \ast}$. We shall apply Lemma~\ref{lemmamkk} to the matrix $M[T_3]$ associated with a tree of depth two, so that we use the notation in this lemma for $k=2$, $M[T_3]$ and 
$$N(M[T_3])=\left|\{\lambda \colon m_{1,1}^{M[T_3]}(\lambda)>0\} \right|.$$
For the definition of $m_{j,\ell}^{(M[T_3])}$, we refer the reader to Definition~\ref{def:mkl}. The number $N(M[T_3])$ records the number of distinct real numbers $\lambda$ such that $\texttt{Diagonalize}(M[T_3-v_3],-\lambda)$ produces a 0 at $w_1$, $w_2$ or $w_3$. By our assumption, the values $\lambda_4,\lambda^\ast,\lambda^{\ast\ast}$ have this property, so that $N(M[T_3])\geq 3$.

By Lemma~\ref{lemmamkk}, $m_{2,2}^{(M[T_3])}=N(M[T_3])+1\geq 4$, which means that $0$ appears at the root of $T_3$ in an application of $\texttt{Diagonalize}(M[T_3],-\lambda)$ for at least four distinct values of $\lambda$. This is a contradiction, as we already know that the values are $\lambda_1$, $\lambda_3$ or $\lambda_5$.
\end{proof}

Claim~\ref{claim5} ensures that $\lambda^\ast=\lambda^{\ast \ast}=\lambda_2$ because an application of $\texttt{Diagonalize}(M,-\lambda^\ast)$ produces a $0$ in $w_1$ or $w_2$, so that $\lambda^\ast \in \Spec(M[T_3])$. Moreover, $\lambda^\ast \notin \{\lambda_1, \lambda_5\}$, since the least and greatest eigenvalue of $M[T_3]$ can only produce a $0$ at the root $v_3$ of $T_3$. This means that $\tr(M(T_3[w_1,w_{11},w_{12}]))=\tr(M(T_3[w_2,w_{21},w_{22}]))=\lambda_2+\lambda_3+\lambda_4$.
Regarding $M(T_3[v_3,w_3])$, we already know that $\lambda=\lambda_3$ is an eigenvalue with multiplicity one and that $\lambda'=\lambda_4$ is not an eigenvalue. Let $\lambda^\ast$ be the second eigenvalue. This leads to 
\begin{eqnarray*}
\lambda_1+\lambda_2+3\lambda_3&+&2\lambda_4+\lambda_5=\tr(M[T_3])\\
&=& \tr(M(T_3[v_3,w_3]))+\tr(M(T_3[w_{1},w_{11},w_{12}]))+\tr(M(T_3[w_{2},w_{21},w_{22}]))\\
&=& 2\lambda_2+3\lambda_3+2\lambda_4+\lambda^\ast
\end{eqnarray*}
This leads to the equation 
\begin{equation}\label{eq:rest2}
\lambda^\ast +\lambda_2=\lambda_1+\lambda_5, \textrm{ where }\lambda^\ast \neq \lambda_4.
\end{equation}
Note that~\eqref{eq:rest1} (which is a necessary condition for $T_1$) and~\eqref{eq:rest2} are mutually exclusive, so that this case cannot happen.

The only remaining case is $\lambda'<\lambda$. When applying $\texttt{Diagonalize}(M,-\lambda)$, we get three 0's, two 2's, two $-1/2$'s and $d_{w_3}=m_{w_3w_3}-\lambda=\lambda'-\lambda<0$. This means that $\lambda=\lambda_3$ and $\lambda'=\lambda_2$. Proceeding as in the previous case, we will reach
\begin{equation}\label{eq:rest3}
\lambda^\ast +\lambda_4=\lambda_1+\lambda_5, \textrm{ where }\lambda^\ast \neq \lambda_2.
\end{equation}
As before,~\eqref{eq:rest1} and~\eqref{eq:rest3} are mutually exclusive. 

This shows that it is impossible to construct $M[\overline{T}]$ such that its eigenvalues are $\{\lambda_1,\ldots,\lambda_5\}$, and exactly three of them are in $\sigma_0$. Thus there is no matrix $M\in \mathcal{S}(T)$ such that $|\DSpec(M)|=7$.  
\end{proof}

This proof has other implications. We say that a graph $G$ whose connected components are $G_1,\ldots,G_s$ has Property $C$ if $q(G) = \max_i q(G_i)$. Note that it is always true that $q(G) \geq \max_i q(G_i)$. Let $\mathcal{C}$ be the class of graphs with Property C. One might think that $\mathcal{C}$ contains all graphs, but the following corollary shows that it is not true.
\begin{coro}
Let $T=T_1\cup T_3$, where $T_1$ and $T_3$ are the trees defined in Figure~\ref{T1}. Then, $T\notin \mathcal{C}$.  
\end{coro}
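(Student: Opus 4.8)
To prove $T_1\cup T_3\notin\mathcal{C}$ it is enough to show $q(T_1\cup T_3)>5$. Since $T_1$ and $T_3$ are trees of diameter~$5$, they are diminimal by~\cite{JSdiminimal}, so $q(T_1)=q(T_3)=5$, and as $q(T_1\cup T_3)\ge\max\{q(T_1),q(T_3)\}=5$ always holds, it suffices to rule out $q(T_1\cup T_3)=5$. Assume, for contradiction, that there is $N\in\mathcal{S}(T_1\cup T_3)$ with $|\DSpec(N)|=5$. Write $N=M_1\oplus M_3$ with $M_i\in\mathcal{S}(T_i)$; since $T_i$ has diameter~$5$, $|\DSpec(M_i)|\ge5$, hence $\DSpec(M_1)=\DSpec(M_3)=\DSpec(N)=\{\lambda_1<\lambda_2<\lambda_3<\lambda_4<\lambda_5\}$. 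By Theorem~\ref{thm:simpleroots}, $\lambda_1$ and $\lambda_5$ are simple in both $M_1$ and $M_3$, and the zero \texttt{Diagonalize} produces for each of them lies at the root ($v_1$ for $M_1$, $v_3$ for $M_3$). As in the proof of Theorem~\ref{badseedS68}, the multiplicity list of $M_1$ is $\{3,1,1,1,1\}$ or $\{2,2,1,1,1\}$, and that of $M_3$ is $\{3,2,1,1,1\}$ or $\{2,2,2,1,1\}$.

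The plan is to repeat, for the branches $T_1$ and $T_3$ individually, the structural analysis in the proof of Theorem~\ref{badseedS68}. In the case where $M_1$ has list $\{3,1,1,1,1\}$ and $M_3$ has list $\{3,2,1,1,1\}$, that analysis goes through essentially unchanged — it only uses that $\DSpec(M[T_i])=\{\lambda_1,\dots,\lambda_5\}$, that $\lambda_1,\lambda_5$ are simple with their zero at the root, and the way \texttt{Diagonalize} acts on a depth-two tree — and yields, via the decomposition of $\tr(M[T_1])$ over $v_1$ and of $\tr(M[T_3])$ over $v_3$, the relation $\lambda_2+\lambda_4=\lambda_1+\lambda_5$ from $T_1$ (equation~\eqref{eq:rest1}) together with $\lambda^\ast+\lambda_2=\lambda_1+\lambda_5$ with $\lambda^\ast\neq\lambda_4$, or $\lambda^\ast+\lambda_4=\lambda_1+\lambda_5$ with $\lambda^\ast\neq\lambda_2$, from $T_3$ (equations~\eqref{eq:rest2}--\eqref{eq:rest3}); these are mutually incompatible, a contradiction.

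The remaining work, which I expect to be the main obstacle, is to handle the other multiplicity lists and the fact that the proof of Theorem~\ref{badseedS68} discarded several sub-cases using the global bound ``exactly three eigenvalues of $M[T_i]$ put a zero at the root'', which here is only available as ``at least three'' (Lemma~\ref{no_zero}, applied to the depth-two trees $M_1,M_3$). For this I would argue directly on each $M[T_i]$: Lemma~\ref{lemmamkk} expresses the number of reals that send a zero to the root as one plus the number that send a zero to a child of the root, and the latter is determined by the spectra of the $3\times3$ (or smaller) matrices sitting on the sub-branches at $v_1$ (resp.\ $v_3$); one exploits that the grandchild-leaf diagonal entries attached to a multiplicity-three eigenvalue all equal it, that such a $3\times3$ sub-branch matrix always has its leaf value strictly between its two other eigenvalues, and that interlacing forces the multiplicity-three eigenvalue to be $\lambda_3$. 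Carrying this through should show that in every remaining configuration one either runs into an infeasibility, or still extracts from $T_1$ a relation $\lambda_a+\lambda_b=\lambda_1+\lambda_5$ with $\{a,b\}\subseteq\{2,3,4\}$ that clashes with the relation forced by $T_3$. Either way $q(T_1\cup T_3)=5$ is impossible, so $q(T_1\cup T_3)\ge6>5=\max\{q(T_1),q(T_3)\}$, and therefore $T_1\cup T_3\notin\mathcal{C}$.
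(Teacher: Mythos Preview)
Your proposal has a genuine gap, and it is more serious than you seem to think. You claim that in the case where $M_1$ has list $(3,1,1,1,1)$ and $M_3$ has list $(3,2,1,1,1)$ the analysis of Theorem~\ref{badseedS68} ``goes through essentially unchanged'' and ``only uses that $\DSpec(M[T_i])=\{\lambda_1,\dots,\lambda_5\}$, that $\lambda_1,\lambda_5$ are simple with their zero at the root, and the way \texttt{Diagonalize} acts''. This is not correct. The derivation of~\eqref{eq:rest1} uses that $\Spec(M[u_i,u_{i1},u_{i2}])=\{\lambda_2,\lambda_3,\lambda_4\}$, which in turn relies on knowing that $\lambda_2$ and $\lambda_4$ do \emph{not} put a zero at the root $v_1$ (so that their single zero must occur at both $u_1$ and $u_2$). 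Without the constraint $|\sigma_0|=3$, each of $\lambda_2,\lambda_4$ could instead put its zero at $v_1$, in which case the spectra of the two $3\times3$ sub-branches need not be $\{\lambda_2,\lambda_3,\lambda_4\}$, and the trace identity only yields $\lambda_1+\lambda_2+\lambda_4+\lambda_5=\mu_1^-+\mu_1^++\mu_2^-+\mu_2^+$ for some unspecified branch eigenvalues $\mu_i^\pm$. Likewise, on the $T_3$ side Claim~\ref{claim5} (that $\lambda^\ast=\lambda^{\ast\ast}$) is proved by showing $N(M[T_3])\ge3$ would force $m^{(M[T_3])}_{2,2}\ge4$, ``a contradiction, as we already know that the values are $\lambda_1,\lambda_3$ or $\lambda_5$'' --- again invoking $|\sigma_0|=3$. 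So neither \eqref{eq:rest1} nor \eqref{eq:rest2}--\eqref{eq:rest3} is available to you in this case without further argument.

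Beyond that, the remaining multiplicity-list combinations are explicitly left as ``remaining work'' with only a loose plan. That plan would need to establish, for each configuration, what the sub-branch spectra actually are and how they interact --- the sketch you give (interlacing, $3\times3$ structure) points in a reasonable direction but does not constitute a proof. In short, you have correctly diagnosed that the global bound $|\sigma_0|=3$ is the crux and is not automatic for $T_1\cup T_3$; what is missing is any replacement for it. The paper's own proof of the corollary is extremely terse and simply asserts that the infeasibility established in Theorem~\ref{badseedS68} already shows $\{\lambda_1,\dots,\lambda_5\}$ cannot be common to $M[T_1]$ and $M[T_3]$; you are right to be more careful, but you have not yet closed the argument.
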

\begin{proof}
Recall that, in the proof of Theorem~\ref{badseedS68}, our strategy was to show that, assuming that a diminimal realization $M$ of $T$ exists, the distinct eigenvalues of $\sigma(M[\overline{T}])$ must be a set $\{\lambda_1,\ldots,\lambda_5\}$, where $\overline{T}$ is a forest with two components isomorphic to $T_1$ and two components isomorphic to $T_3$. In the end we obtained a system of equations involving the eigenvalues $\lambda_j$ for the different components $T_i$. 
The desired contradiction came from the fact that the system is infeasible, which means that $\lambda_1,\ldots,\lambda_5$ cannot be the distinct eigenvalues of both $M[T_1]$ and $M[T_3]$. This implies that, even though $T_1$ and $T_3$ are both diminimal (since they have diameter 5), we have
$$q(T) \geq 6 > 5 =  \max\{q(T_1),q(T_3)\}.$$
\end{proof}

\newcommand{\counterexampledois}{
\begin{tikzpicture}[scale=1,auto=left,every node/.style={circle,scale=0.7}]
%[scale=1,auto=left,every node/.style={circle,scale=0.9}]

\path(0,1)node[shape=circle,draw,fill=white] (a) {$v$}
     (1,1.5)node[shape=circle,draw,fill=white] (b) {$u$}
     (2,2)node[shape=circle,draw,fill=white] (c) {$w$}
     
     (-1   ,-0.5)node[shape=circle,draw,fill=white] (0) {$v_2$}
     (-1.55,-1.25)node[shape=circle,draw,fill=black] (01) {}
     (-0.45,-1.25)node[shape=circle,draw,fill=black] (02) {}
     (-1.2 ,-2)node[shape=circle,draw,fill=black]    (011) {}
     (-1.9   ,-2)node[shape=circle,draw,fill=black]  (012) {}
     (-0.1    ,-2)node[shape=circle,draw,fill=black] (021) {}
     (-0.8 ,-2)node[shape=circle,draw,fill=black]    (022) {}

     (-3.25   ,-0.5)node[shape=circle,draw,fill=white] (1) {$v_1$}
     (-3.8,-1.25)node[shape=circle,draw,fill=black] (11) {}
     (-2.7,-1.25)node[shape=circle,draw,fill=black] (12) {}
     (-3.45 ,-2)node[shape=circle,draw,fill=black]    (111) {}
     (-4.15   ,-2)node[shape=circle,draw,fill=black]  (112) {}
     (-2.35    ,-2)node[shape=circle,draw,fill=black] (121) {}
     (-3.05 ,-2)node[shape=circle,draw,fill=black]    (122) {}

     (-1   +2.45,-0.5)node[shape=circle,draw,fill=white] (2) {$v_3$}
     (-0.35+2.45,-0.5)node[shape=circle,draw,fill=black]  (20) {}
     (-1.55+2.45,-1.25)node[shape=circle,draw,fill=black] (21) {}
     (-0.45+2.45,-1.25)node[shape=circle,draw,fill=black] (22) {}
     (-1.2 +2.45,-2)node[shape=circle,draw,fill=black]    (211) {}
     (-1.9 +2.45  ,-2)node[shape=circle,draw,fill=black]  (212) {}
     (-0.1 +2.45   ,-2)node[shape=circle,draw,fill=black] (221) {}
     (-0.8 +2.45,-2)node[shape=circle,draw,fill=black]    (222) {}

     (-1   +4.8,-0.5)node[shape=circle,draw,fill=white] (3) {$v_4$}
     (-0.35+4.8,-0.5)node[shape=circle,draw,fill=black]  (30) {}
     (-1.55+4.8,-1.25)node[shape=circle,draw,fill=black] (31) {}
     (-0.45+4.8,-1.25)node[shape=circle,draw,fill=black] (32) {}
     (-1.2 +4.8,-2)node[shape=circle,draw,fill=black]    (311) {}
     (-1.9 +4.8  ,-2)node[shape=circle,draw,fill=black]  (312) {}
     (-0.1 +4.8   ,-2)node[shape=circle,draw,fill=black] (321) {}
     (-0.8 +4.8,-2)node[shape=circle,draw,fill=black]    (322) {} ;

      \draw[-](a)--(b);
      \draw[-](b)--(c);
      \draw[-](a)--(0);
      \draw[-](a)--(1);
      \draw[-](a)--(2);
      \draw[-](a)--(3);

      \draw[-](0)--(01);
      \draw[-](0)--(02);
      \draw[-](01)--(011);
      \draw[-](01)--(012);
      \draw[-](02)--(021);
      \draw[-](02)--(022);

      \draw[-](1)-- (11);
      \draw[-](1)-- (12);
      \draw[-](11)--(111);
      \draw[-](11)--(112);
      \draw[-](12)--(121);
      \draw[-](12)--(122);

      \draw[-](2)-- (20);
      \draw[-](2)-- (21);
      \draw[-](2)-- (22);
      \draw[-](21)--(211);
      \draw[-](21)--(212);
      \draw[-](22)--(221);
      \draw[-](22)--(222);

      \draw[-](3)-- (30);
      \draw[-](3)-- (31);
      \draw[-](3)-- (32);
      \draw[-](31)--(311);
      \draw[-](31)--(312);
      \draw[-](32)--(321);
      \draw[-](32)--(322);
    
     \end{tikzpicture}
}

% \begin{figure}[H]
%     \centering
%     \counterexampledois
%     \caption{A tree $T$ that is an unfolding of the seed $S_6^{(9)}$ and is not diminimal.}
%     \label{counter_example_2}
% \end{figure}

\begin{figure}[H]
    \centering
    \hspace{-1cm}
    \input{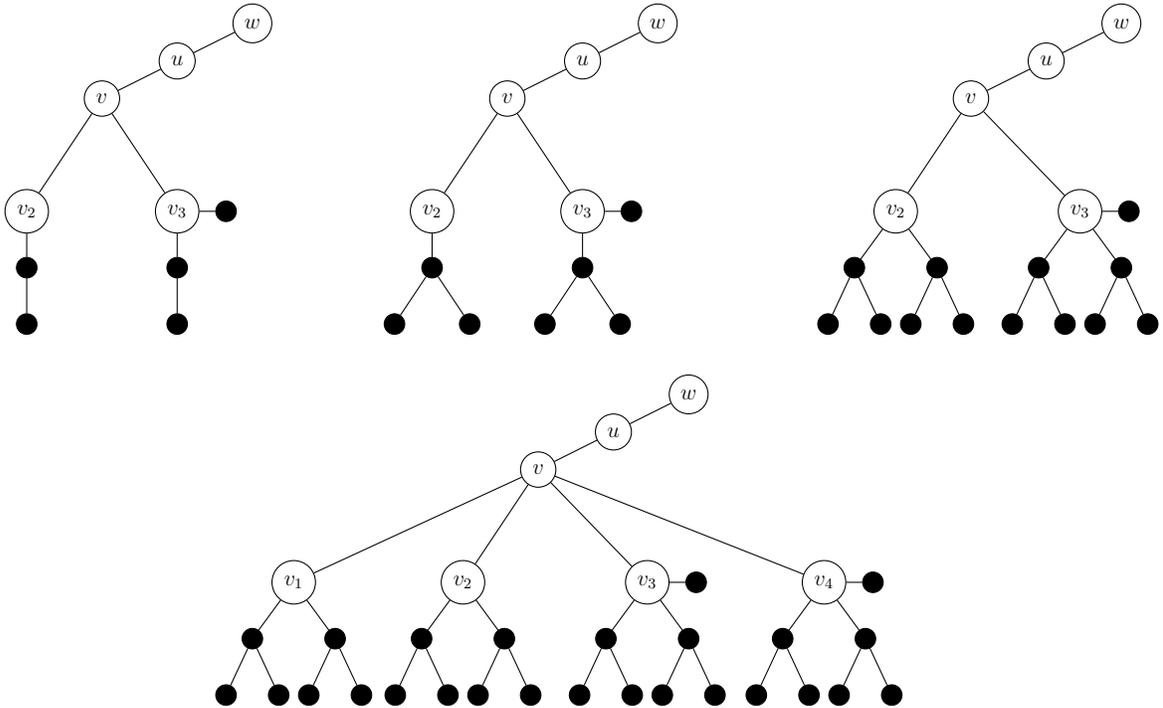}
    \caption{A sequence of CBD's starting with $S_7^{(9)}$ that achieves a tree $T$ (at the bottom) that is not diminimal.}
    \label{counter_example_2}
\end{figure}

  \begin{theorem}\label{badseedS69}
      The seed $S_7^{(9)}$ is a defective seed.
  \end{theorem}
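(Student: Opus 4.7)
The plan is to follow the strategy of the proof of Theorem~\ref{badseedS68}, with adaptations that account for the replacement of the single pendant leaf attached to the root by a pendant path $v-u-w$ of length two. Consider the tree $T$ at the bottom of Figure~\ref{counter_example_2}, rooted at $v$. Let $v_1,v_2,v_3,v_4$ be the non-leaf children of $v$ (those not on the pendant path), let $T_i$ denote the component of $T-v$ rooted at $v_i$, and set $\overline{T} = T_1 \cup T_2 \cup T_3 \cup T_4$. The branches $T_1,\ldots,T_4$ have exactly the same structure as in the proof of Theorem~\ref{badseedS68}, so all conclusions about each $M[T_i]$ established there will be available here. Suppose towards a contradiction that $M\in \mathcal{S}(T)$ satisfies $|\DSpec(M)|=7$.

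The first step is to prove an analog of Claim~\ref{claim1}. We apply $\texttt{Diagonalize}(M,-\lambda)$ using an ordering in which $w$, $u$ and $v$ are the last three vertices to be processed. A direct case analysis then shows: (i) if $\lambda \notin \sigma(M[\overline{T}])$, then $m_M(\lambda) \leq 1$; (ii) if $\lambda \in \sigma_j$ for some $j>0$, then $m_M(\lambda)\in\{m_{M[\overline{T}]}(\lambda),\, m_{M[\overline{T}]}(\lambda)+1\}$; (iii) if $\lambda \in \sigma_0$, then $m_M(\lambda)=m_{M[\overline{T}]}(\lambda)$ when $\lambda$ is an eigenvalue of the $2\times 2$ matrix $M[\{u,w\}]$, and $m_M(\lambda)=m_{M[\overline{T}]}(\lambda)-1$ otherwise. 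The crucial observation behind case (iii) is that, after processing $w$ and $u$, the entry $d_u$ vanishes exactly when $(m_{uu}-\lambda)(m_{ww}-\lambda)=(m_{uw})^2$, i.e., exactly when $\lambda$ is an eigenvalue of $M[\{u,w\}]$; in all other situations, $d_u$ and $d_w$ are both nonzero when $v$ is processed, so the pendant path can contribute at most one additional zero at $v$.

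With this modified Claim in hand, we repeat the counting argument of Theorem~\ref{badseedS68}. The only modifications are that $|V(T)|=|V(\overline{T})|+3$ (so $\sum_{i=1}^{5} m_M(\lambda_i)=|V(\overline{T})|+1$) and that the parameter $\delta_u \in \{0,1\}$ from that proof is replaced by a parameter $\delta\in\{0,1,2\}$, counting how many eigenvalues of $M[\{u,w\}]$ belong to $\sigma_0\cap\sigma(M)$. Following the same sequence of inequalities and using Lemma~\ref{no_zero} applied to each depth-two tree $T_i$, we obtain, in order, $B=\emptyset$ (no missing eigenvalues), $\sigma(M[\overline{T}])=\{\lambda_1,\ldots,\lambda_5\}$, $|\sigma_0\cap\sigma(M)|=3$, $|\sigma_{\geq 1}\cap\sigma(M)|=2$, and finally $\delta=2$ (so both eigenvalues of $M[\{u,w\}]$ must coincide with eigenvalues in $\sigma_0\cap\sigma(M)$).

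Once these structural conditions are in place, the branch-by-branch analysis of $T_1,T_2,T_3,T_4$ is identical to the one in Theorem~\ref{badseedS68}: it yields the linear relation $\lambda_2+\lambda_4 = \lambda_1+\lambda_5$ (from $T_1$ and $T_2$) together with a relation of the form $\lambda^\ast+\lambda_2 = \lambda_1+\lambda_5$ with $\lambda^\ast\neq\lambda_4$, or $\lambda^\ast+\lambda_4 = \lambda_1+\lambda_5$ with $\lambda^\ast\neq\lambda_2$ (from $T_3$ and $T_4$). These are mutually exclusive, providing the desired contradiction. The main technical obstacle is the careful verification of the new version of Claim~\ref{claim1}, especially case (iii) with $\delta\leq 2$; once this is correctly established, the rest of the proof transfers directly from the proof of Theorem~\ref{badseedS68}.
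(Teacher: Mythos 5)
Your proposal is correct and follows essentially the same route as the paper: the same tree from Figure~\ref{counter_example_2} rooted at $v$, the same adaptation of Claim~\ref{claim1} (with the condition $\lambda\in\sigma(M[\{u,w\}])$ replacing $m_{uu}=\lambda$ and the slack parameter $\delta_{u,w}=|\sigma_0\cap\sigma(M[\{u,w\}])|\le 2$ replacing $\delta_u\le 1$), the observation that the resulting inequality reduces to exactly the same form as~\eqref{eq33}, and then the transfer of the branch-by-branch trace analysis from the proof of Theorem~\ref{badseedS68}. Your extra remark that $\delta_{u,w}=2$ is forced is a correct consequence of the final counting, though the paper does not state it explicitly and it is not needed since the branch analysis is independent of $T_0$.
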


  \begin{proof}
Consider the tree $T$ of diameter $7$ depicted in Figure~\ref{counter_example_2}. It is rooted at its central vertex $v$, which is unique because $T$ has odd diameter. Let $w$ be the leaf of $T$ for which $d(v,w)=2$ and let $u$ be the neighbor of $w$. Let $v_i$, for $i\in [4]=\{1,2,3,4\}$, be the children of $v$ that are not $u$, as they appear in Figure~\ref{counter_example_2}. Let $T_i$ be the connected component of $T-v$ that contains $v_i$, viewed as a rooted tree with root $v_i$, for $i\in [4]$. Let $\overline{T}=T_1\cup T_2\cup T_3\cup T_4$, so that $V(\overline{T})=V-\{u,v,w\}$.

Let $M\in\mathcal{S}(T)$. As in the previous proof, we write $\sigma(M)=\Spec(M)$. For any $\lambda\in\sigma(M)$, recall that $L(M,\lambda)$ is the distance between the main root and the closest vertex to the main root for which \texttt{Diagonalize}$(M,-\lambda)$ assigns value $0$. We start with the analogue of Claim~\ref{claim1} for this tree.

\begin{claim}\label{claim12}
The following are true:
\begin{enumerate}
    \item [(i)] If $\lambda \in \sigma(M)$ does not lie in $\sigma(M[\overline{T}])$, then $m_M(\lambda)=1$;
    \item [(ii)]         If $\lambda\in\sigma_j$ for $j>0$, then $\lambda\in\sigma                           (M)$ with $m_M(\lambda)=m_{M[\overline{T}]}(\lambda)$ or $m_M(\lambda)=m_{M[\overline{T}]}(\lambda)+1$;
    \item [(iii)] If $\lambda\in\sigma_0$, then $m_M(\lambda)=m_{M[\overline{T}]}(\lambda)$ if $\lambda\in \sigma(M[\{u,w\}])$ and $m_M(\lambda)=m_{M[\overline{T}]}(\lambda)-1$ otherwise.
\end{enumerate}
\end{claim}

% \begin{claim}\label{claim12}
% If $\lambda \in \sigma(M)$ does not lie in $\sigma(M[\overline{T}])$, then $m_M(\lambda)=1$.
% \end{claim}

\begin{proof}[Proof of Claim]
Consider an application of $\texttt{Diagonalize}(M,-\lambda)$ using an ordering for which $w$, $u$ and $v$ are the last three vertices. Before processing these three vertices, each remaining vertex $y$ is assigned the same value $d_y$ that would be assigned by $\texttt{Diagonalize}(M[T_i],-\lambda)$ to the corresponding component $T_i$.

For item $(i)$, suppose that $\lambda \notin \sigma(M[\overline{T}])$, all values are nonzero. By Theorem~\ref{thm_localizacao}(c) and given that $\lambda \in \sigma(M)$, we have $1 \leq m(\lambda) \leq 3$. Moreover, $m_M(\lambda)\geq 2$ if and only if two values among $d_u$, $d_v$ and $d_w$ are zero at the end of the algorithm. This is not possible. If $d_w=0$ upon processing $w$, then when processing $u$ the algorithm would assign $d_w>0$ and  $d_u<0$. If $d_w\neq 0$ and $d_u=0$ upon processing $u$, then when processing $v$ we would get $d_u>0$ and $d_v<0$.

In item $(ii)$, given that $\lambda\in \sigma_j$ with $j>0$, there are vertices $y \in V(\overline{T})$ for which $d_y=0$, but these vertices are not $v_1,\ldots,v_4$. As a consequence, the values assigned to these vertices will not change while processing $w$, $u$ and $v$. As explained in the previous paragraph, the algorithm may not assign value 0 for two quantities among $d_w$, $d_u$ and $d_v$. By Theorem~\ref{thm_localizacao}(c), $m_T(\lambda) \in \{m_{\overline{T}}(\lambda),m_{\overline{T}}(\lambda)+1\}$. 

For item $(iii)$ we suppose that $\lambda\in \sigma_0$. So, there are vertices $y \in V(\overline{T})$ for which $d_y=0$, at least one of which is $v_1,\ldots,v_4$. Next we process the leaf $w$, which has value $d_w=m_{ww}-\lambda$. Now, notice that the assignment of $w$ and $u$ will be the same as in $\texttt{Diagonalize}(M[\{u,w\}],-\lambda)$.

If $\lambda\in \sigma(M[\{u,w\}])$, then $d_u$ must be assigned with $0$ by $\texttt{Diagonalize}(M[\{u,w\}],-\lambda)$, because if $d_w=0$ upon processing $w$, then the algorithm would assign $d_u<0$ and $d_w>0$, a contradiction. While processing $v$, the value of one of its children (say $d_u$) is redefined as 2, and $d_v$ is assigned a negative number. Thus no new zero is produced, and $m_M(\lambda)=m_{M[\overline{T}]}(\lambda)$ by Theorem~\ref{thm_localizacao}(c).

If $\lambda \notin \sigma(M[\{u,w\}])$, we have $d_w,d_u\neq 0$. While processing $v$, the value of one of the $v_i$ for which $d_{v_i}=0$) is redefined as 2, and $d_v$ is assigned a negative number. This means that the number of zero values decreases by 1 and $m_M(\lambda)=m_{M[\overline{T}]}(\lambda)-1$ by Theorem~\ref{thm_localizacao}(c). 
\end{proof}

Coming back to our proof, let $\lambda_{\min}$ and $\lambda_{\max}$ denote the minimum and the maximum eigenvalue of $M$, respectively. By Theorem~\ref{thm:simpleroots}, we know that their multiplicity in $M$ is 1 and that $\lambda_{\min},\lambda_{\max}\notin\sigma(M[\overline{T}])$. 

Since $T_i$ has diameter $5$ for each $i \in [4]$, we know that $M[T_i]$ has at least five distinct eigenvalues. Moreover, by Lemma~\ref{no_zero}, there exist distinct real numbers $\mu_1,\mu_2,\mu_3$ such that $L(M[T_i],\mu_\ell)=0$ for all $\ell\in [3]$. Recall that any eigenvalue of $M[T_i]$ that is not an eigenvalue of $M$ is called a missing eigenvalue of branch $T_i$. 

To reach a contradiction, suppose that $|\DSpec(M)|=7$. Let the distinct eigenvalues of $M$ be $\lambda_{\min}<\lambda_1<\lambda_2<\lambda_3<\lambda_4<\lambda_5<\lambda_{\max}$. Since $\lambda_{\min},\lambda_{\max}\notin\sigma(M[\overline{T}])$, there are at most five distinct values that are in $\sigma_j$, for some $j\geq 0$ and in $\sigma(M)$. It is important to observe that $|V(\overline{T})|+1=n-2=m_{M}(\lambda_1)+m_{M}(\lambda_2)+m_{M}(\lambda_3)+m_{M}(\lambda_4)+m_{M}(\lambda_5)$. If each branch $T_i$ has $s_i$ missing eigenvalues, we get
\begin{equation}\label{eq302}
\sum_{\lambda\in\sigma(M[\overline{T}])\cap\sigma(M)}m_{M[\overline{T}]}(\lambda)=|V(\overline{T})|-(s_1+\cdots+s_4).
\end{equation}
Let $B=\{\lambda_1,\lambda_2,\lambda_3,\lambda_4,\lambda_5\}\setminus\sigma(M[\overline{T}])$ and $j=|B|$. We observe that any $\lambda_\ell \in B$ must be a simple eigenvalue of $M$ by Claim~\ref{claim12}(i).

Note that
\begin{eqnarray}\label{eq312}    
        |V(\overline{T})|+1 
        &=& m_{M}(\lambda_1)+m_{M}(\lambda_2)+m_{M}(\lambda_3)+m_{M}(\lambda_4)+m_{M}(\lambda_5)\nonumber\\\nonumber
        &=& \sum_{i=1}^j m_{M}(\lambda_i) + \sum_{\lambda\in\sigma_0\cap\sigma(M)} m_{M} (\lambda) + \sum_{\lambda\in\sigma_{\geq1}\cap\sigma(M)} m_{M} (\lambda)\\
        &=& j + \sum_{\lambda\in\sigma_0\cap\sigma(M)} m_{M} (\lambda) + \sum_{\lambda\in\sigma_{\geq1}\cap\sigma(M)} m_{M} (\lambda)
\end{eqnarray}
Claim~\ref{claim12}(ii) implies that
$$\sum_{\lambda\in\sigma_{\geq1}\cap\sigma(M)} m_{M} (\lambda) \leq \sum_{\lambda\in\sigma_{\geq1}} [m_{M[\overline{T}]} (\lambda)+1].$$
By Claim~\ref{claim12}(iii), we have
$$\sum_{\lambda\in\sigma_0\cap\sigma(M)} m_{M} (\lambda) \leq \delta_{u,w} + \sum_{\lambda\in\sigma_0\cap\sigma(M)} (m_{M[\overline{T}]}(\lambda)-1),$$
where $\delta_{u,w}=|\sigma_0\cap\sigma(M[\{u,w\}])|$. Note that $\delta_{u,w}\leq 2$, because $M[\{u,w\}]$ has two vertices, so $|\sigma(M[\{u,w\}])|\leq 2$. As a consequence, \eqref{eq312} implies
\begin{eqnarray}
        |V(\overline{T})|&+&1 \leq  j+\delta_{u,w} + \sum_{\lambda\in\sigma(M[\overline{T}])\cap\sigma(M)}m_{M[\overline{T}]}(\lambda) - |\sigma_0\cap\sigma(M)| +|\sigma_{\geq1}\cap\sigma(M)| \nonumber \\
        &\leq& j+\delta_{u,w} + |V(\overline{T})| - (s_1+\cdots+s_4) - |\sigma_0\cap\sigma(M)| +|\sigma_{\geq1}\cap\sigma(M)|. \label{eq322}
        \end{eqnarray}

Summing $2|\sigma_{0}\cap\sigma(M)|$ to both sides of the inequality \eqref{eq322}, and performing a few algebraic manipulations, we get
\begin{equation}\label{eq3321}
|\sigma_{0}\cap\sigma(M)|+|\sigma_{\geq1}\cap\sigma(M)| \geq 2|\sigma_0\cap\sigma(M)| + (s_1+\cdots+s_4)-j-\delta_{u,w}+1.
\end{equation}  
Moreover, given that $\delta_{u,w}\leq 2$, we have 
\begin{equation}\label{eq332}
|\sigma_{0}\cap\sigma(M)|+|\sigma_{\geq1}\cap\sigma(M)| \geq 2|\sigma_0\cap\sigma(M)| + (s_1+\cdots+s_4)-j-1.
\end{equation}

Equation \eqref{eq332} is exactly the same as \eqref{eq33}, which appears in the proof of Theorem~\ref{badseedS68}. Using the same reasoning, we conclude that $\sigma(M[\overline{T}])=\{\lambda_1,\ldots,\lambda_5\} \subset \sigma(M)$ and that $|\sigma_0 \cap \sigma(M)|=3$. However, the proof of Theorem~\ref{badseedS68} establishes that it is impossible to find real numbers $\lambda_1<\cdots<\lambda_5$ with this property. Thus there is no matrix $M\in \mathcal{S}(T)$ such that $|\DSpec(M)|=7$.
\end{proof}

\newcommand{\counterexampletres}{
\begin{tikzpicture}[scale=1,auto=left,every node/.style={circle,scale=0.7}]
%[scale=1,auto=left,every node/.style={circle,scale=0.9}]

\path(0,1)node[shape=circle,draw,fill=white] (a) {$v$}
     (1,1.5)node[shape=circle,draw,fill=white] (b) {$u$}
     (2,2)node[shape=circle,draw,fill=white] (c) {$w$}
     (-1,1.5)node[shape=circle,draw,fill=white] (d) {$x$}
     
     (-1   ,-0.5)node[shape=circle,draw,fill=white] (0) {$v_2$}
     (-1.55,-1.25)node[shape=circle,draw,fill=black] (01) {}
     (-0.45,-1.25)node[shape=circle,draw,fill=black] (02) {}
     (-1.2 ,-2)node[shape=circle,draw,fill=black]    (011) {}
     (-1.9   ,-2)node[shape=circle,draw,fill=black]  (012) {}
     (-0.1    ,-2)node[shape=circle,draw,fill=black] (021) {}
     (-0.8 ,-2)node[shape=circle,draw,fill=black]    (022) {}

     (-3.25   ,-0.5)node[shape=circle,draw,fill=white] (1) {$v_1$}
     (-3.8,-1.25)node[shape=circle,draw,fill=black] (11) {}
     (-2.7,-1.25)node[shape=circle,draw,fill=black] (12) {}
     (-3.45 ,-2)node[shape=circle,draw,fill=black]    (111) {}
     (-4.15   ,-2)node[shape=circle,draw,fill=black]  (112) {}
     (-2.35    ,-2)node[shape=circle,draw,fill=black] (121) {}
     (-3.05 ,-2)node[shape=circle,draw,fill=black]    (122) {}

     (-1   +2.45,-0.5)node[shape=circle,draw,fill=white] (2) {$v_3$}
     (-0.35+2.45,-0.5)node[shape=circle,draw,fill=black]  (20) {}
     (-1.55+2.45,-1.25)node[shape=circle,draw,fill=black] (21) {}
     (-0.45+2.45,-1.25)node[shape=circle,draw,fill=black] (22) {}
     (-1.2 +2.45,-2)node[shape=circle,draw,fill=black]    (211) {}
     (-1.9 +2.45  ,-2)node[shape=circle,draw,fill=black]  (212) {}
     (-0.1 +2.45   ,-2)node[shape=circle,draw,fill=black] (221) {}
     (-0.8 +2.45,-2)node[shape=circle,draw,fill=black]    (222) {}

     (-1   +4.8,-0.5)node[shape=circle,draw,fill=white] (3) {$v_4$}
     (-0.35+4.8,-0.5)node[shape=circle,draw,fill=black]  (30) {}
     (-1.55+4.8,-1.25)node[shape=circle,draw,fill=black] (31) {}
     (-0.45+4.8,-1.25)node[shape=circle,draw,fill=black] (32) {}
     (-1.2 +4.8,-2)node[shape=circle,draw,fill=black]    (311) {}
     (-1.9 +4.8  ,-2)node[shape=circle,draw,fill=black]  (312) {}
     (-0.1 +4.8   ,-2)node[shape=circle,draw,fill=black] (321) {}
     (-0.8 +4.8,-2)node[shape=circle,draw,fill=black]    (322) {} ;

      \draw[-](a)--(b);
      \draw[-](a)--(d);
      \draw[-](b)--(c);
      \draw[-](a)--(0);
      \draw[-](a)--(1);
      \draw[-](a)--(2);
      \draw[-](a)--(3);

      \draw[-](0)--(01);
      \draw[-](0)--(02);
      \draw[-](01)--(011);
      \draw[-](01)--(012);
      \draw[-](02)--(021);
      \draw[-](02)--(022);

      \draw[-](1)-- (11);
      \draw[-](1)-- (12);
      \draw[-](11)--(111);
      \draw[-](11)--(112);
      \draw[-](12)--(121);
      \draw[-](12)--(122);

      \draw[-](2)-- (20);
      \draw[-](2)-- (21);
      \draw[-](2)-- (22);
      \draw[-](21)--(211);
      \draw[-](21)--(212);
      \draw[-](22)--(221);
      \draw[-](22)--(222);

      \draw[-](3)-- (30);
      \draw[-](3)-- (31);
      \draw[-](3)-- (32);
      \draw[-](31)--(311);
      \draw[-](31)--(312);
      \draw[-](32)--(321);
      \draw[-](32)--(322);
    
     \end{tikzpicture}
}

\begin{figure}[H]
    \centering
    \hspace{-1cm}
    \input{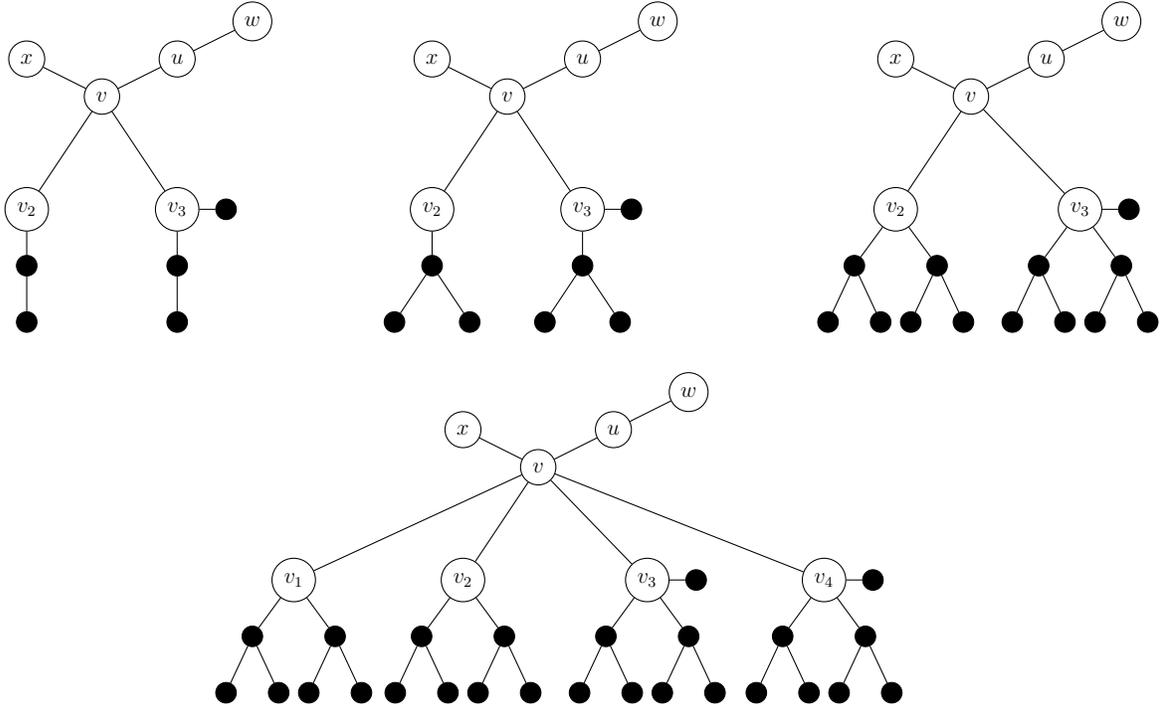}
    \caption{A sequence of CBD's starting with $S_7^{(7)}$ that achieves a tree $T$ (at the bottom) that is not diminimal.}
    \label{counter_example_3}
\end{figure}

% \begin{figure}[H]
%     \centering
%     \counterexampletres
%     \caption{A tree $T$ that is an unfolding of the seed $S_6^{(7)}$ and is not diminimal.}
%     \label{counter_example_3}
% \end{figure}

Finally we prove that $S_7^{(7)}$ is a defective seed. The proof is also similar to the proof of Theorem~\ref{badseedS68}.
  \begin{theorem}
      The seed $S_7^{(7)}$ is a defective seed.
  \end{theorem}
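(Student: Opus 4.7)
The plan is to follow the template of Theorems~\ref{badseedS68} and~\ref{badseedS69}, applied to the tree $T$ in Figure~\ref{counter_example_3}, with the extra leaf $x$ absorbed into an enlarged ``external'' subtree at the root. I would root $T$ at its centre $v$, let $v_1,v_2,v_3,v_4$ be the children of $v$ that are roots of the four depth-$2$ branches $T_1,\ldots,T_4$ (each a copy of one of the trees in Figure~\ref{T1}), let $u$ be the other internal neighbour of $v$ (with its unique leaf $w$), and let $x$ be the leaf adjacent to $v$. Set $\overline{T}=T_1\cup T_2\cup T_3\cup T_4$, so that $V(\overline{T})=V(T)-\{u,v,w,x\}$; by Theorem~\ref{thm:simpleroots} one has $\lambda_{\min},\lambda_{\max}\notin\sigma(M[\overline{T}])$, hence $\sum_{i=1}^{5}m_M(\lambda_i)=|V(\overline{T})|+2$ under the working assumption $|\DSpec(M)|=7$.

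The critical new step is the analogue of Claim~\ref{claim12}. Running \texttt{Diagonalize}$(M,-\lambda)$ with $w,x,u,v$ processed last and splitting on whether $d_w$, $d_u$, $d_x$ vanish at the moment they are processed, I would establish
\begin{equation*}
m_M(\lambda) \;=\; m_{M[\overline{T}]}(\lambda) + m_{M[\{u,w,x\}]}(\lambda) - 1 \qquad \text{for every } \lambda\in\sigma_0,
\end{equation*}
where $M[\{u,w,x\}]$ is the block-diagonal matrix consisting of the $2\times 2$ block on the edge $uw$ and the scalar block $[m_{xx}]$. Since the $2\times 2$ block is irreducible (its eigenvalues are distinct), the correction $\delta_{u,w,x}(\lambda):=m_{M[\{u,w,x\}]}(\lambda)$ is at most $2$, and $\delta_{u,w,x}^{\mathrm{tot}}:=\sum_{\lambda\in\sigma_0\cap\sigma(M)}\delta_{u,w,x}(\lambda)\leq 3$ since $M[\{u,w,x\}]$ has size $3$. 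For $\lambda\notin\sigma(M[\overline{T}])$, the usual argument shows $m_M(\lambda)=1$, and for $\lambda\in\sigma_{\geq 1}$ the additional leaf $x$ can contribute at most one extra zero that is consumed at $v$, so $m_M(\lambda)\in\{m_{M[\overline{T}]}(\lambda),m_{M[\overline{T}]}(\lambda)+1\}$ exactly as in Claim~\ref{claim12}(ii).

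I would then repeat the counting argument between \eqref{eq312} and \eqref{eq322}. The left-hand side acquires the value $|V(\overline{T})|+2$, one more than in Theorem~\ref{badseedS69}, but this is balanced by the bound $\delta_{u,w,x}^{\mathrm{tot}}\leq 3$ (one more than the bound $\delta_{u,w}^{\mathrm{tot}}\leq 2$ used there). Consequently the algebra reproduces verbatim inequality~\eqref{eq332},
\begin{equation*}
|\sigma_{0}\cap\sigma(M)|+|\sigma_{\geq1}\cap\sigma(M)| \;\geq\; 2|\sigma_0\cap\sigma(M)| + (s_1+\cdots+s_4)-j-1.
\end{equation*}
From this point the argument is identical to the one in Theorems~\ref{badseedS68} and~\ref{badseedS69}: one forces $B=\emptyset$, then $\sigma(M[\overline{T}])=\{\lambda_1,\ldots,\lambda_5\}$, then $|\sigma_0\cap\sigma(M)|=3$ and $|\sigma_{\geq 1}\cap\sigma(M)|=2$. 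The contradiction is inherited directly from the trace analysis of the branches $T_1$ and $T_3$ carried out in the proof of Theorem~\ref{badseedS68}, which showed that~\eqref{eq:rest1} cannot hold simultaneously with~\eqref{eq:rest2} or~\eqref{eq:rest3}.

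The main obstacle is the refined Claim~(iii). The delicate sub-case is $\lambda=m_{xx}$ with $\lambda\in\sigma(M[\{u,w\}])$, where $\delta_{u,w,x}(\lambda)=2$: two children of $v$ among $\{u,x,v_1,\ldots,v_4\}$ then carry zeros beyond those already recorded in $m_{M[\overline{T}]}(\lambda)$, and one has to verify that exactly one of them is consumed when $v$ is processed so that the net change agrees with $\delta_{u,w,x}(\lambda)-1=+1$. Once this bookkeeping is handled, the remainder of the proof is a straightforward adaptation of the preceding two theorems.
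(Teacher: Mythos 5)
Your proposal is correct and follows essentially the same approach as the paper's proof: same counterexample tree, same decomposition into $\overline{T}$ and $T_0=\{u,v,w,x\}$, and the same final counting argument inherited from Theorem~\ref{badseedS68}. The only cosmetic difference is that the paper states its version of Claim~\ref{claim13}(iii) as a three-way case split on whether $m_{xx}=\lambda$ and whether $\lambda\in\sigma(M[\{u,w\}])$, whereas you compress it into the single formula $m_M(\lambda)=m_{M[\overline{T}]}(\lambda)+m_{M[\{u,w,x\}]}(\lambda)-1$; the two are equivalent, and your bound $\delta_{u,w,x}^{\mathrm{tot}}\leq 3$ is exactly the paper's $\delta_x+\delta_{u,w}\leq 1+2$.
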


\begin{proof}
Consider the tree $T$ of diameter $7$ depicted in Figure~\ref{counter_example_3}. It is rooted at vertex $v$. Let $w$ be the leaf of $T$ for which $d(v,w)=2$ and let $u$ be the neighbor of $w$. Let $x$ be the leaf connected to $v$. Let $v_i$, for $i\in [4]=\{1,2,3,4\}$, be the children of $v$ that are not $u$ nor $x$, as they appear in Figure~\ref{counter_example_3}. Let $T_i$ be the connected component of $T-v$ that contains $v_i$, viewed as a rooted tree with root $v_i$, for $i\in [4]$. Let $\overline{T}=T_1\cup T_2\cup T_3\cup T_4$ and let $T_0=T[v,u,w,x]$, so that $V(\overline{T})=V-T_0$.

Let $M\in\mathcal{S}(T)$. As before, we write $\sigma(M)=\Spec(M)$, and, for any $\lambda\in\sigma(M)$, $L(M,\lambda)$ denotes the distance between the root and the closest vertex to the root assigned with $0$. We again start with a claim about the eigenvalues of $M$.

\begin{claim}\label{claim13}
The following are true:
\begin{enumerate}
    \item [(i)] If $\lambda \in \sigma(M)$ does not lie in $\sigma(M[\overline{T}])$, then $m_M(\lambda)=1$;
    \item [(ii)] If $\lambda\in\sigma_j$ for $j>0$, then $\lambda\in\sigma                           (M)$ with $m_M(\lambda)=m_{M[\overline{T}]}(\lambda)$ or $m_M(\lambda)=m_{M[\overline{T}]}(\lambda)+1$;
    \item [(iii)] If $\lambda\in\sigma_0$, one of the following cases is true:
$$\displaystyle{m_M(\lambda) =
\begin{cases}
m_{M[\overline{T}]}(\lambda)+1, & \textrm{ if } m_{xx}=\lambda \textrm{ and }\lambda\in \sigma(M[\{u,w\}]) ,\\
m_{M[\overline{T}]}(\lambda), & \textrm{ if either } m_{xx}=\lambda \textrm{ or } \lambda\in \sigma(M[\{u,w\}]),\\
m_{M[\overline{T}]}(\lambda)-1, & \textrm{ otherwise}.
\end{cases}}$$
\end{enumerate}
\end{claim}

% \begin{claim}\label{claim13}
% If $\lambda \in \sigma(M)$ does not lie in $\sigma(M[\overline{T}])$, then $m_M(\lambda)=1$.
% \end{claim}

\begin{proof}[Proof of Claim]
Consider an application of $\texttt{Diagonalize}(M,-\lambda)$ using an ordering for which the vertices in $T_0$ are processed in the end. Before processing $T_0$, each remaining vertex $y$ is assigned the same value $d_y$ that would be assigned by $\texttt{Diagonalize}(M[T_i],-\lambda)$ to the corresponding component $T_i$. 

For item $(i)$, suppose that $\lambda \notin \sigma(M[\overline{T}])$, so that all values assigned by the algorithm are nonzero. By Theorem~\ref{thm_localizacao}(c) and given that $\lambda \in \sigma(M)$, we have $1 \leq m(\lambda)$. Moreover, $m_M(\lambda)\geq 2$ if and only if the value assigned to at least two out of the four vertices of $T_0$ are zero at the end of the algorithm. This is not possible. If $d_w=0$ upon processing $w$, then when processing $u$ we would have $d_w>0$ and $d_u<0$; in this case, if $d_x=0$ upon processing $x$, then when processing $v$ we would have $d_x>0$ and $d_v<0$. 

So assume that $d_w\neq0$. If upon processing $u$, we have $d_u=0$, when processing $v$ we would get $d_v<0$ and $d_u>0$, in addition to $d_w\neq 0$. Next, assume $d_w,d_u\neq0$ upon processing $u$. If we have $d_x=0$ upon processing $x$, when processing $v$ we would have $d_v<0$ and $d_x>0$, in addition to $d_w,d_u\neq 0$. So there is no case where two or more vertices $y$ of $T_0$ are assigned value $d_y=0$.

Regarding $(ii)$, given that $\lambda\in \sigma_j$ with $j>0$, there are vertices $y \in V(\overline{T})$ for which $d_y=0$, but these vertices are not $v_1,\ldots,v_4$. As a consequence, the values assigned to these vertices will not change while processing $T_0$. As explained in the previous paragraph, the algorithm may not assign value 0 for two or more vertices of $T_0$. By Theorem~\ref{thm_localizacao}(c), $m_T(\lambda) \in \{m_{\overline{T}}(\lambda),m_{\overline{T}}(\lambda)+1\}$. 

To prove $(iii)$ we suppose that $\lambda\in \sigma_0$. Then, there are vertices $y \in V(\overline{T})$ for which $d_y=0$, at least one of which lies in $\{v_1,\ldots,v_4\}$. Next we process the leaf $w$, which has value $d_w=m_{ww}-\lambda$. Now, notice that the assignment of $w$ and $u$ will be the same as in $\texttt{Diagonalize}(M[\{u,w\}],-\lambda)$. Next, we process the leaf $x$ which will have assignment $d_x=m_{ww}-\lambda$.

First, consider the case where $\lambda\in \sigma(M[\{u,w\}])$ and $m_{xx}=\lambda$. Because $\lambda\in \sigma(M[\{u,w\}])$, $d_u$ must be assigned with $0$ by $\texttt{Diagonalize}(M[\{u,w\}],-\lambda)$, because if $d_w=0$, then the algorithm would assign $d_u<0$ and $d_w>0$. Because $m_{xx}=\lambda$, we have $d_x=0$. While processing $v$, the value of one of its children (say $d_u$) is redefined as 2, and $d_v$ is assigned a negative number. Thus one new zero is produced (for vertex $x$), and $m_M(\lambda)=m_{M[\overline{T}]}(\lambda)+1$ by Theorem~\ref{thm_localizacao}(c).

Secondly, consider the case where one, and only one, of the following cases is true: 
\begin{itemize}
    \item [(a)] $\lambda\in \sigma(M[\{u,w\}])$;
    \item [(b)]$m_{xx}=\lambda$.
\end{itemize}
If (a) is true, then, as explained above, we will have $d_u=0$ and $d_w\neq 0$. Since (b) is not true, we have $d_x=m_{xx}-\lambda\neq0$. While processing $v$, the value of one of its children (say $d_u$) is redefined as 2, and $d_v$ is assigned a negative number. Thus no new zero is produced, and $m_M(\lambda)=m_{M[\overline{T}]}(\lambda)$ by Theorem~\ref{thm_localizacao}(c).

Now, if (b) is true, we will have $d_x=0$. Since (a) is not true, we have $d_w,d_u\neq 0$. While processing $v$, the value of one of its children (say $d_x$) is redefined as 2, and $d_v$ is assigned a negative number. Thus no new zero is produced, and $m_M(\lambda)=m_{M[\overline{T}]}(\lambda)$ by Theorem~\ref{thm_localizacao}(c).

Finally, suppose $\lambda\notin \sigma(M[\{u,w\}])$ and $m_{xx}\neq\lambda$. Since $\lambda\notin \sigma(M[\{u,w\}])$, then $d_w,d_u\neq 0$. And, since $m_{xx}\neq\lambda$ we have $d_x=m_{xx}-\lambda\neq0$. While processing $v$, the value of one of the $v_i$ for which $d_{v_i}=0$) is redefined as 2, and $d_v$ is assigned a negative number. This means that the number of zero values decreases by 1 and $m_M(\lambda)=m_{M[\overline{T}]}(\lambda)-1$ by Theorem~\ref{thm_localizacao}(c). 
\end{proof}

Coming back to our proof, we follow the steps of the proof of Theorem~\ref{badseedS68}. In particular, proof $\lambda_{\min}$ and $\lambda_{\max}$ are such that their multiplicity in $M$ is 1 and that $\lambda_{\min},\lambda_{\max}\notin\sigma(M[\overline{T}])$. 

Each $M[T_i]$ has at least five distinct eigenvalues. Moreover, by Lemma~\ref{no_zero}, there exist distinct real numbers $\mu_1,\mu_2,\mu_3$ such that $L(M[T_i],\mu_\ell)=0$ for all $\ell\in [3]$. Recall that an eigenvalue of $M[T_i]$ that is not an eigenvalue of $M$ is a missing eigenvalue of branch $T_i$. 
        
To reach a contradiction, suppose that $|\DSpec(M)|=7$. Let the distinct eigenvalues of $M$ be $\lambda_{\min}<\lambda_1<\lambda_2<\lambda_3<\lambda_4<\lambda_5<\lambda_{\max}$. Since $\lambda_{\min},\lambda_{\max}\notin\sigma(M[\overline{T}])$, there are at most five distinct values that are in $\sigma_j$, for some $j\geq 0$ and in $\sigma(M)$. It is important to observe that $|V(\overline{T})|+2=n-2=m_{M}(\lambda_1)+m_{M}(\lambda_2)+m_{M}(\lambda_3)+m_{M}(\lambda_4)+m_{M}(\lambda_5)$. If each branch $T_i$ has $s_i$ missing eigenvalues, we get
\begin{equation}\label{eq303}
\sum_{\lambda\in\sigma(M[\overline{T}])\cap\sigma(M)}m_{M[\overline{T}]}(\lambda)=|V(\overline{T})|-(s_1+\cdots+s_4).
\end{equation}
Let $B=\{\lambda_1,\lambda_2,\lambda_3,\lambda_4,\lambda_5\}\setminus\sigma(M[\overline{T}])$ and $j=|B|$. We observe that any $\lambda_\ell \in B$ must be a simple eigenvalue of $M$ by Claim~\ref{claim13}(i).

Note that
\begin{eqnarray}\label{eq313}    
        |V(\overline{T})|+2
        &=& m_{M}(\lambda_1)+m_{M}(\lambda_2)+m_{M}(\lambda_3)+m_{M}(\lambda_4)+m_{M}(\lambda_5)\nonumber\\\nonumber
        &=& \sum_{i=1}^j m_{M}(\lambda_i) + \sum_{\lambda\in\sigma_0\cap\sigma(M)} m_{M} (\lambda) + \sum_{\lambda\in\sigma_{\geq1}\cap\sigma(M)} m_{M} (\lambda)\\
        &=& j + \sum_{\lambda\in\sigma_0\cap\sigma(M)} m_{M} (\lambda) + \sum_{\lambda\in\sigma_{\geq1}\cap\sigma(M)} m_{M} (\lambda)
\end{eqnarray}
Claim~\ref{claim13}(ii) implies that
$$\sum_{\lambda\in\sigma_{\geq1}\cap\sigma(M)} m_{M} (\lambda) \leq \sum_{\lambda\in\sigma_{\geq1}} [m_{M[\overline{T}]} (\lambda)+1].$$
By Claim~\ref{claim13}(iii), we have
$$\sum_{\lambda\in\sigma_0\cap\sigma(M)} m_{M} (\lambda) \leq \delta_{x}+\delta_{u,w} + \sum_{\lambda\in\sigma_0\cap\sigma(M)} (m_{M[\overline{T}]}(\lambda)-1),$$
where $\delta_{u,w}=|\sigma_0\cap\sigma(M[\{u,w\}])|$ and $\delta_{x}=1$ if $m_{xx}$ lies in the set $\{\lambda \colon \lambda \in \sigma_0\cap\sigma(M)\}$ and $\delta_{x}=0$ otherwise. Note that $\delta_{u,w}\leq 2$, because $M[\{u,w\}]$ has two vertices, then $|\sigma(M[\{u,w\}])|\leq 2$. Clearly, $\delta_{x}\leq 1$.

As a consequence, \eqref{eq313} leads to
\begin{eqnarray}
        &&|V(\overline{T})|+2 \leq j+\delta_{x}+\delta_{u,w} + \sum_{\lambda\in\sigma(M[\overline{T}])\cap\sigma(M)}m_{M[\overline{T}]}(\lambda) - |\sigma_0\cap\sigma(M)| +|\sigma_{\geq1}\cap\sigma(M)| \nonumber \\
        &\leq& j+\delta_{x}+\delta_{u,w} + |V(\overline{T})| - (s_1+\cdots+s_4) - |\sigma_0\cap\sigma(M)| +|\sigma_{\geq1}\cap\sigma(M)|. \label{eq323}
        \end{eqnarray}

Using the inequality \eqref{eq323} and summing $2|\sigma_{0}\cap\sigma(M)|$ on both sides, we get
\begin{equation}\label{eq333}
|\sigma_{0}\cap\sigma(M)|+|\sigma_{\geq1}\cap\sigma(M)| \geq 2|\sigma_0\cap\sigma(M)| + (s_1+\cdots+s_4)-j-\delta_{x}-\delta_{u,w}+2.
\end{equation}  
Moreover, given that $\delta_{u,w}\leq 2$ and $\delta_{x}\leq 1$, we have
\begin{equation}\label{eq342}
|\sigma_{0}\cap\sigma(M)|+|\sigma_{\geq1}\cap\sigma(M)| \geq 2|\sigma_0\cap\sigma(M)| + (s_1+\cdots+s_4)-j-1.
\end{equation}

Equation \eqref{eq342} is exactly the same as \eqref{eq33}, which appears in the proof of Theorem~\ref{badseedS68}. Using the same reasoning, we conclude that $\sigma(M[\overline{T}])=\{\lambda_1,\ldots,\lambda_5\} \subset \sigma(M)$ and that $|\sigma_0 \cap \sigma(M)|=3$. However, the proof of Theorem~\ref{badseedS68} establishes that it is impossible to find real numbers $\lambda_1<\cdots<\lambda_5$ with this property. Thus there is no matrix $M\in \mathcal{S}(T)$ such that $|\DSpec(M)|=7$.

\end{proof}

\section{Proof of Theorem~\ref{t2}}\label{sec:thm2}

The aim of this section is to prove that, for any seed 
$S_7^{(i)}$ such that $7\leq i \leq 9$ and any tree $T \in \mathcal{T}(S_7^{(i)})$, there exists a matrix $M$ with underlying tree $T$ such that $|\DSpec(M)|=8$.

We start with some notation. Let $v$ denote the central vertex of each $S_7^{(7)}$, $S_7^{(8)}$ and $S_7^{(9)}$, that is, the midpoint of any maximum path in the seed. Let $v_1,v_2$ be the neighbors of $v$ that are not leaves and that have at least two non-leaf neighbors (including $v$). Vertex $v_2$ is adjacent to a leaf. Figure~\ref{bad_seeds} depicts the three seeds. Any diameter-preserving unfoldings of these seeds are either CBDs of branches at $v$ or CBDs at other vertices for branches that do not contain 
$v$.

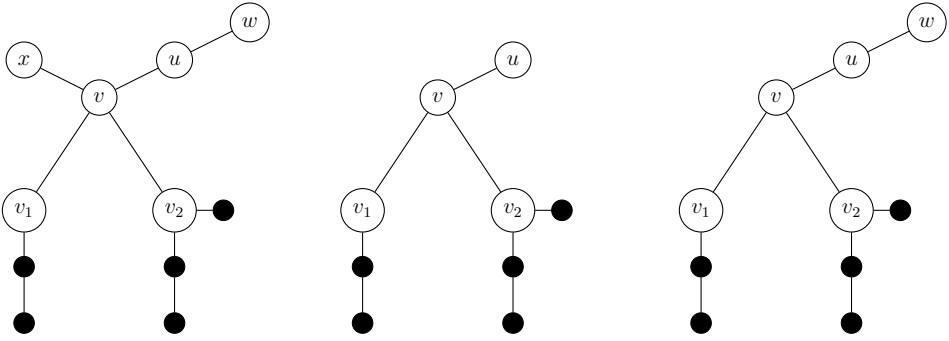
\begin{figure}
    \centering
    \hspace{-1cm}
    \begin{tikzpicture}[scale=1,auto=left,every node/.style={circle,scale=0.7}]
%[scale=1,auto=left,every node/.style={circle,scale=0.9}]

\path(0,1)node[shape=circle,draw,fill=white] (a) {$v$}
     (1,1.5)node[shape=circle,draw,fill=white] (b) {$u$}
     (2,2)node[shape=circle,draw,fill=white] (c) {$w$}
     (-1,1.5)node[shape=circle,draw,fill=white] (d) {$x$} 
     (-1   ,-0.5)node[shape=circle,draw,fill=white] (0) {$v_1$}
     (-1,-1.25)node[shape=circle,draw,fill=black] (01) {}
     (-1 ,-2)node[shape=circle,draw,fill=black]    (011) {}
     (-1   +2,-0.5)node[shape=circle,draw,fill=white] (2) {$v_2$}
     (-0.35+2,-0.5)node[shape=circle,draw,fill=black]  (20) {}
     (-1+2,-1.25)node[shape=circle,draw,fill=black] (21) {}
     (-1 +2,-2)node[shape=circle,draw,fill=black]    (211) {}

     (0+4.5,1)node[shape=circle,draw,fill=white] (a8) {$v$}
     (1+4.5,1.5)node[shape=circle,draw,fill=white] (b8) {$u$}
     (-1 +4.5  ,-0.5)node[shape=circle,draw,fill=white] (08) {$v_1$}
     (-1+4.5,-1.25)node[shape=circle,draw,fill=black] (018) {}
     (-1 +4.5,-2)node[shape=circle,draw,fill=black]    (0118) {}
     (-1   +2+4.5,-0.5)node[shape=circle,draw,fill=white] (28) {$v_2$}
     (-0.35+2+4.5,-0.5)node[shape=circle,draw,fill=black]  (208) {}
     (-1+2+4.5,-1.25)node[shape=circle,draw,fill=black] (218) {}
     (-1 +2+4.5,-2)node[shape=circle,draw,fill=black]    (2118) {}

     (0+9,1)node[shape=circle,draw,fill=white] (a9) {$v$}
     (1+9,1.5)node[shape=circle,draw,fill=white] (b9) {$u$}
     (2+9,2)node[shape=circle,draw,fill=white] (c9) {$w$}
     (-1   +9,-0.5)node[shape=circle,draw,fill=white] (09) {$v_1$}
     (-1+9,-1.25)node[shape=circle,draw,fill=black] (019) {}
     (-1 +9,-2)node[shape=circle,draw,fill=black]    (0119) {}
     (-1   +2+9,-0.5)node[shape=circle,draw,fill=white] (29) {$v_2$}
     (-0.35+2+9,-0.5)node[shape=circle,draw,fill=black]  (209) {}
     (-1+2+9,-1.25)node[shape=circle,draw,fill=black] (219) {}
     (-1 +2+9,-2)node[shape=circle,draw,fill=black]    (2119) {}
     ;
     
      \draw[-](a)--(b);
      \draw[-](a)--(0);
      \draw[-](b)--(c);
      \draw[-](a)--(d);
      \draw[-](a)--(2);
      \draw[-](0)--(01);
      \draw[-](01)--(011);
      \draw[-](2)-- (20);
      \draw[-](2)-- (21);
      \draw[-](21)--(211);

      \draw[-](a8)--(b8);
      \draw[-](a8)--(08);
      \draw[-](a8)--(28);
      \draw[-](08)--(018);
      \draw[-](018)--(0118);
      \draw[-](28)-- (208);
      \draw[-](28)-- (218);
      \draw[-](218)--(2118);

      \draw[-](a9)--(b9);
      \draw[-](a9)--(09);
      \draw[-](b9)--(c9);
      \draw[-](a9)--(29);
      \draw[-](09)--(019);
      \draw[-](019)--(0119);
      \draw[-](29)-- (209);
      \draw[-](29)-- (219);
      \draw[-](219)--(2119);

     \end{tikzpicture}
    \caption{The seeds $S_7^{(7)}$, $S_7^{(8)}$ and $S_7^{(9)}$.}
    \label{bad_seeds}
\end{figure}

We split each seed into three parts as follows. Let $S \in \{S_7^{(7)},S_7^{(8)},S_7^{(9)}\}$.
\begin{enumerate}
    \item[(i)] 
    Let $S_0$ be the connected component of  $S\setminus\{v_1,v_2\}$ that contains $v$;
    \item[(ii)] Let $S_1$ be the connected component of  $S\setminus \{v\}$ that contains $v_1$;
    \item[(iii)] Let $S_2$ be the connected component of  $S\setminus \{v\}$ that contains $v_2$.
\end{enumerate}

Let $S \in \{S_7^{(7)},S_7^{(8)},S_7^{(9)}\}$. We consider a canonical way to perform the CDBs to produce any $T \in \mathcal{T}(S)$. Given $q_1,q_2\in\mathbb{N}$ we first do a $(q_1-1)$-CBD of the branch $S_1$ at $v$ and a $(q_2-1)$-CBD of the branch $S_2$ at $v$. After this, if we remove the vertices of $S_0$, we have components $S^{(1)}_1,\ldots,S^{(q_1)}_1$ isomorphic to $S_1$, each of which is viewed as rooted at the vertex connected to $v$, denoted by $v^{(i)}_1$, and components $S^{(1)}_2,\ldots,S^{(q_2)}_2$ isomorphic to $S_2$, each of which is rooted at the vertex connected to $v$, denoted by $v^{(j)}_2$. Next, we consider CBDs in each of these components. The unfolding of each $S^{(i)}_1$ will be denoted by $T^{(i)}_1$ and the submatrix associated with this branch will be $M(T^{(i)}_1)$. The notation $T^{(j)}_2$ and $M^{(j)}_2$ is defined analogously with respect to $S^{(j)}_2$. The unfolding of all branches in $S_0$ will be denoted by $T_0$, and the submatrix associated with it is $M(T_0)$. Figure~\ref{generalform} depicts the overall structure of the unfolding of $S$.

 \begin{figure}
\centering
\begin{tikzpicture}[scale=.7,auto=left,every node/.style={circle,scale=0.5}]

\path (-4,0)node[shape=circle,draw,fill=white] (02) {$M_1(T^{(1)}_{1})$}
      (-4,0.75)node[shape=rectangle,label=above:\Large{$v^{(1)}_1$},draw,fill=red,minimum size=0.5cm] (2) {}
      
      (-1,0)node[shape=circle,draw,fill=white] (01) {$M_1(T^{(q_1)}_{1})$}
      (-1,0.75)node[shape=rectangle,label=above:\Large{$v^{(q_1)}_1$},draw,fill=red,minimum size=0.5cm] (1) {}

      (4,0)node[shape=circle,draw,fill=white] (06) {$M_2(T^{(q_2)}_{2})$}
      (4,0.75)node[shape=rectangle,label=above:\Large{$v^{(q_2)}_2$},draw,fill=red,minimum size=0.5cm] (6) {}
      
      (1,0)node[shape=circle,draw,fill=white] (05) {$M_2(T^{(1)}_{2})$}
      (1,0.75)node[shape=rectangle,label=above:\Large{$v^{(1)}_2$},draw,fill=red,minimum size=0.5cm] (5) {}
      
      (0,3.5)node[shape=circle,draw,fill=white,minimum size=2cm] (03) {$M(T_0)$}
      (0,2.75)node[shape=rectangle,label=below:\Large{$v$},draw,fill=red,minimum size=0.5cm] (3) {};

     \draw[-](3)--(2);
     \draw[dotted](02)--(01);
     \draw[-](3)--(1);
     \draw[-](3)--(5);
     \draw[dotted](05)--(06);
     \draw[-](3)--(6);

\end{tikzpicture}     
\caption{General unfolding of a seed $S \in \{S_7^{(7)},S_7^{(8)},S_7^{(9)}\}$.}
\label{generalform}
 \end{figure}
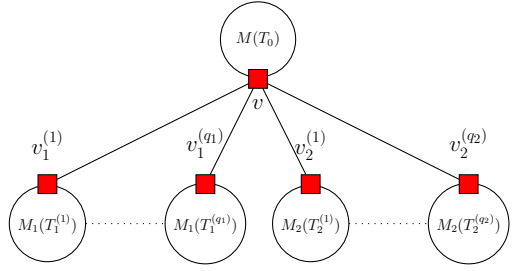

Furthermore we define the notation for general unfoldings of $S_1$ and $S_2$. Let $p$ and $t_0,t_1,\ldots,t_p$ be non-negative integers. For $S_1$, the tree $T_1(t_1,\ldots,t_p)$ is obtained by performing a $(p-1)$-CBD of the single branch of $S_1$ at $v_1$, and by performing $(t_i-1)$-CBDs of the leaves connected to each of the $p$ children of $v_1$. Similarly, for $S_2$, the tree $T_2(t_0,t_1,\ldots,t_p)$ is obtained by performing a $(p-1)$-CBD of the branch of $S_2$ at $v_2$ that is a 2-path, $(t_i-1)$-CBDs of the leaves connected to each of the $p$ children of $v_2$, and a $(t_0-1)$-CBD of the branch at $v_2$ that contains a single vertex.

Figure~\ref{q8T1} depicts $T_1(t_1,\ldots,t_p)$ and the entries that will be assigned to the submatrix of $M$ associated with the vertices of this tree. Diagonal entries are shown as vertex weights. Off-diagonal entries are shown as edge weights. If we consider the submatrices in Figure~\ref{generalform}, given $i\in\{1,\ldots,q_1\}$, let $p_{i,1}\geq 1$ and $t^{(i)}_1,\ldots,t^{(i)}_{p_{i,1}}\geq 1$. We have $T^{(i)}_1 = T_1(t^{(i)}_1,\ldots,t^{(i)}_{p_{i,1}})$. Moreover,  $M(T^{(i)}_1)\in\mathcal{S}(T^{(i)}_1)$ is the matrix given by the weights in Figure~\ref{q8T1}.
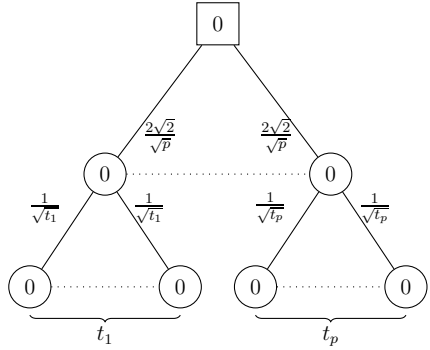
\begin{figure}
\centering
 \begin{tikzpicture}
[scale=1,auto=left,every node/.style={circle,scale=0.7}]

\path( 0.5,0)  node[shape=circle,draw,minimum size=0.8cm,label=above:,inner sep=0] (v1)  {$0$}
     ( 2.5,0)  node[shape=circle,draw,minimum size=0.8cm,label=above:,inner sep=0] (v2)  {$0$}
     ( 1.5,1.5)node[shape=circle,draw,minimum size=0.8cm,label=above:,inner sep=0] (v12) {$0$}

     ( 3.5,0)  node[shape=circle,draw,minimum size=0.8cm,label=above:,inner sep=0] (v3)  {$0$}
     ( 5.5,0)  node[shape=circle,draw,minimum size=0.8cm,label=above:,inner sep=0] (v4)  {$0$}
     ( 4.5,1.5)node[shape=circle,draw,minimum size=0.8cm,label=above:,inner sep=0] (v11) {$0$}

     ( 3,3.5)  node[shape=rectangle,draw,minimum size=0.8cm,label=above:,inner sep=0] (v13) {$0$};

\draw[dotted](v1) -- (v2) ; 
\draw[-](v1) edge node{$\frac{1}{\sqrt{t_1}}$} (v12) ; 
\draw[-,pos=0.375,above](v2) edge node{$\frac{1}{\sqrt{t_1}}$} (v12) ;

\draw[decoration={brace,mirror,raise=5pt},decorate]
  (0.5,-0.2) -- node[right=0pt,below,yshift=-0.5em] {$t_1$} (2.5,-0.2);
  
\draw[dotted](v3) -- (v4) ; 
\draw[-](v3) edge node{$\frac{1}{\sqrt{t_p}}$}  (v11) ; 
\draw[-,pos=0.375,above](v4) edge node{$\frac{1}{\sqrt{t_p}}$}  (v11) ; 

\draw[decoration={brace,mirror,raise=5pt},decorate]
  (3.5,-0.2) -- node[right=0pt,below,yshift=-0.5em] {$t_p$} (5.5,-0.2);

\draw[dotted](v12) -- (v11) ; 
\draw[-,below](v13) edge node{$\frac{2\sqrt{2}}{\sqrt{p}}$}  (v12) ; 
\draw[-,below](v13) edge node{$\frac{2\sqrt{2}}{\sqrt{p}}$}  (v11) ; 

\end{tikzpicture}
            \caption{A general unfolding $T_1 = T_1(t_1,\ldots,t_p)$ of $S_1$, rooted at the vertex $v_1$, which depicted as a square. Vertex and edge-weights denote the entries of the matrix associated with it.} 
            \label{q8T1}
\end{figure}
Lemma~\ref{spectraT1} gives information about the spectrum of this matrix, which is a subset of $\{-3,-1,0,1,3\}$
regardless of the choice of the parameters $t_1,\ldots,t_p$. 
We refer to Appendix~\ref{appen} for a proof of this lemma.

\begin{lemat}\label{spectraT1}
    If $M=M(T_1)$ is the matrix in Figure~\ref{q8T1} for the tree $T_1=T_1(t_1,\ldots,t_p)$, then $\DSpec(M)\subseteq\{-3,-1,0,1,3\}$. Moreover, $L(M,\lambda)=0$ for $\lambda\in\{-3,0,3\}$ and $m_{M[T_1\setminus v]}(\lambda) = m_{M}(\lambda)+1$ for $\lambda\in\{-1,1\}$. Also, \texttt{Diagonalize}$(M,-2)$ assigns $d_{v_1}=\frac{10}{3}$ to the root $v_1$ of $T_1$.
\end{lemat}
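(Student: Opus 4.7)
The plan is to apply Algorithm \texttt{Diagonalize}$(M,-\lambda)$ directly to the three-level tree $T_1$, computing the diagonal entries bottom-up as explicit rational functions of $\lambda$. All leaves and all middle-level vertices $u_1,\ldots,u_p$ (the children of $v_1$) begin with value $-\lambda$. For any $\lambda\notin\{-1,0,1\}$, the leaves stay nonzero, so when each $u_i$ is processed the regular branch applies and
\[
d_{u_i} \;=\; -\lambda - t_i\cdot\frac{(1/\sqrt{t_i})^2}{-\lambda} \;=\; \frac{1-\lambda^2}{\lambda},
\]
independent of $t_i$ and nonzero. Processing $v_1$ then gives
\[
d_{v_1} \;=\; -\lambda - p\cdot\frac{(2\sqrt{2}/\sqrt{p})^2}{(1-\lambda^2)/\lambda} \;=\; \frac{\lambda(\lambda-3)(\lambda+3)}{(1-\lambda)(1+\lambda)}.
\]
This rational function vanishes precisely at $\lambda\in\{-3,0,3\}$, so outside $\{-1,0,1\}$ these are the only possible eigenvalues. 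At $\lambda\in\{-3,3\}$, all non-root diagonal entries are nonzero while $d_{v_1}=0$, giving $L(M,\lambda)=0$ and $m_M(\lambda)=1$ by Theorem~\ref{thm_localizacao}.

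Next I would handle the singular cases separately. At $\lambda=0$ every leaf is initialized to $0$, so processing each $u_i$ triggers the special branch of the algorithm: one leaf becomes $2$, the other $t_i-1$ leaves remain $0$, $d_{u_i}$ is reassigned to $-1/(2t_i)$, and the edge from $u_i$ to $v_1$ is removed. After all $u_i$ are processed, $v_1$ has no children left and keeps $d_{v_1}=0$, so $0\in\DSpec(M)$ with $L(M,0)=0$. At $\lambda=\pm 1$ the formula above shows $d_{u_i}=0$ for every $i$, so processing $v_1$ invokes the special branch: one $d_{u_j}$ is reset to $2$, $d_{v_1}=-(2\sqrt{2}/\sqrt{p})^2/2=-4/p$, and the remaining $p-1$ middle vertices keep value $0$. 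Hence $m_M(\pm 1)=p-1$, and these zeros lie at level $1$, so $L(M,\pm 1)>0$ (consistent with $L(M,\lambda)=0$ being claimed only for $\lambda\in\{-3,0,3\}$). Combining everything gives $\DSpec(M)\subseteq\{-3,-1,0,1,3\}$.

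For the third assertion I would run \texttt{Diagonalize}$(M[T_1\setminus v_1],-\lambda)$ on the disjoint union of $p$ stars, one rooted at each $u_i$ with $t_i$ leaves. For $\lambda\in\{-1,1\}$ the same one-step computation as above yields $d_{u_i}=0$ and all $t_i$ leaves receive the nonzero value $-\lambda$, contributing exactly one zero per star. Summing over $i$ gives $m_{M[T_1\setminus v_1]}(\pm 1)=p=m_M(\pm 1)+1$. Finally, plugging $\lambda=2$ into the closed-form computations gives $d_{u_i}=(1-4)/2=-3/2$ and then
\[
d_{v_1}\;=\;-2 \;-\; p\cdot\frac{8/p}{-3/2}\;=\;-2+\frac{16}{3}\;=\;\frac{10}{3},
\]
which is the last claim.

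The only mildly delicate point is the bookkeeping at $\lambda\in\{-1,0,1\}$, where the special branch overwrites child values and removes parent edges; otherwise the proof reduces to a single rational identity and an elementary case analysis, with no substantive obstacle.
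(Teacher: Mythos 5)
Your proof is correct and, at bottom, uses the same mechanism as the paper (running \texttt{Diagonalize} bottom-up on the depth-two tree), but the route differs in a way worth noting. You keep $\lambda$ as a variable and derive the single rational identity
\[
d_{v_1}(\lambda)=\frac{\lambda(\lambda-3)(\lambda+3)}{(1-\lambda)(1+\lambda)}
\]
valid for all $\lambda\notin\{-1,0,1\}$, which immediately shows that the only eigenvalues outside that singular set are $\pm 3$, with $L(M,\pm 3)=0$; you then dispose of the three singular values by hand. The paper, by contrast, runs the algorithm separately for $\lambda\in\{0,-1,1,-3\}$, and does \emph{not} compute $d_{v_1}$ at $\lambda=3$: it instead uses a trace count to deduce that the remaining eigenvalue is $3$ and invokes Theorem~\ref{thm:simpleroots} for $L(M,3)=0$. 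Your rational-function shortcut is more unified — it delivers the containment $\DSpec(M)\subseteq\{-3,-1,0,1,3\}$ and the fact $L(M,\pm 3)=0$ simultaneously, avoiding the trace bookkeeping — at the modest cost of needing the small cautionary remark that $\lambda=0$ lies outside the formula's domain (so that only $\pm3$ are its relevant zeros there), which you do make implicitly by treating $\{-1,0,1\}$ separately. The singular-case computations ($d_{u_i}=-1/(2t_i)$ and edge deletion at $\lambda=0$; $d_{v_1}=-4/p$ with $p-1$ surviving zeros at $\lambda=\pm1$; $m_{M[T_1\setminus v_1]}(\pm1)=p$) and the final evaluation $d_{v_1}(2)=10/3$ all check out and match the paper's values.
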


Analogously, Figure~\ref{q8T2} depicts $T_2(t_0,\ldots,t_p)$, as well as the entries that will be assigned to the submatrix of $M$ associated with the vertices of this tree. As before, diagonal entries are shown as vertex weights and off-diagonal entries are shown as edge weights. Turning back to Figure~\ref{generalform}, given $j\in\{1,\ldots,q_2\}$ let $p_{j,2}\geq 1$ and $t^{(j)}_1,\ldots,t^{(j)}_{p_{j,2}}\geq 1$, then we have $T^{(j)}_2 = T_2(t^{(j)}_0,\ldots,t^{(j)}_{p_{j,2}})$. Moreover,  $M(T^{(j)}_2)\in\mathcal{S}(T^{(j)}_2)$ is the matrix in Figure~\ref{q8T2}.
\begin{figure}
\centering
 \begin{tikzpicture}
[scale=1,auto=left,every node/.style={circle,scale=0.7}]

\path( 0.5+6.5,0)  node[shape=circle,draw,minimum size=0.8cm,label=above:,inner sep=0] (u1)  {$0$}
     ( 2.5+6.5,0)  node[shape=circle,draw,minimum size=0.8cm,label=above:,inner sep=0] (u2)  {$0$}
     ( 1.5+6.5,1.5)node[shape=circle,draw,minimum size=0.8cm,label=above:,inner sep=0] (u12) {$1$}

     ( 3.5+6.5,0)  node[shape=circle,draw,minimum size=0.8cm,label=above:,inner sep=0] (u3)  {$0$}
     ( 5.5+6.5,0)  node[shape=circle,draw,minimum size=0.8cm,label=above:,inner sep=0] (u4)  {$0$}
     ( 4.5+6.5,1.5)node[shape=circle,draw,minimum size=0.8cm,label=above:,inner sep=0] (u11) {$1$}

     ( 3+6.5,3.5)  node[shape=rectangle,draw,minimum size=0.8cm,label=above:,inner sep=0] (u13) {$-1$}

     (1.5+6.5,3.5) node[shape=circle,draw,minimum size=0.8cm,label=above:,inner sep=0] (u17) {$-1$}
     (1.5+6.5,4.5) node[shape=circle,draw,minimum size=0.8cm,label=above:,inner sep=0] (u18) {$-1$};

\draw[dotted](u17) -- (u18) ; 
\draw[-,below=2] (u13) edge node{$\frac{1}{\sqrt{t_0}}$} (u17) ; 
\draw[-,above] (u13) edge node{$\frac{1}{\sqrt{t_0}}$} (u18) ; 

\draw[decoration={brace,raise=5pt},decorate]
  (1.3+6.5,3.5) -- node[right=0pt,left,xshift=-0.5em] {$t_0$} (1.3+6.5,4.5);
  
\draw[dotted](u1) -- (u2) ; 
\draw[-](u1) edge node{$\frac{\sqrt{2}}{\sqrt{t_1}}$} (u12) ; 
\draw[-,pos=0.375,above](u2) edge node{$\frac{\sqrt{2}}{\sqrt{t_1}}$} (u12) ;

\draw[decoration={brace,mirror,raise=5pt},decorate]
  (0.5+6.5,-0.2) -- node[right=0pt,below,yshift=-0.5em] {$t_1$} (2.5+6.5,-0.2);
  
\draw[dotted](u3) -- (u4) ; 
\draw[-](u3) edge node{$\frac{\sqrt{2}}{\sqrt{t_p}}$}  (u11) ; 
\draw[-,pos=0.375,above](u4) edge node{$\frac{\sqrt{2}}{\sqrt{t_p}}$}  (u11) ; 

\draw[decoration={brace,mirror,raise=5pt},decorate]
  (3.5+6.5,-0.2) -- node[right=0pt,below,yshift=-0.5em] {$t_p$} (5.5+6.5,-0.2);

\draw[dotted](u12) -- (u11) ; 
\draw[-,below](u13) edge node{$\frac{\sqrt{5}}{\sqrt{p}}$}  (u12) ; 
\draw[-,below](u13) edge node{$\frac{\sqrt{5}}{\sqrt{p}}$}  (u11) ; 

\end{tikzpicture}
            \caption{A general unfolding $T_2 = T_2(t_0,\ldots,t_p)$ of $S_2$, rooted at the vertex $v_2$, which is depicted as a square. Vertex and edge-weights denote the entries of the matrix associated with it.} 
            \label{q8T2}
\end{figure}
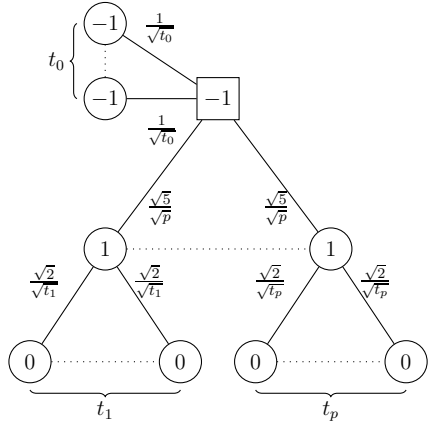
Lemma~\ref{spectraT2} gives information about the spectral of this matrix, which is always a subset of $\{-3,-1,0,2,3\}$.
The proof is in Appendix~\ref{appen}.
\begin{lemat}\label{spectraT2}
    If $M=M(T_2)$ be the matrix in Figure~\ref{q8T2} for the tree $T_2=T_2(t_0,\ldots,t_p)$, then $\DSpec(M)\subset\{-3,-1,0,2,3\}$. Moreover, $L(M,\lambda)=0$ for $\lambda\in\{-3,0,3\}$ and $m_{M[T_2\setminus v_2]}(\lambda) = m_M(\lambda)+1$ for $\lambda\in\{-1,2\}$. Also, \texttt{Diagonalize}$(M,-1)$ assigns $d_{v_2}=-4$ to the root $v_2$.
\end{lemat}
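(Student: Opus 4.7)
The plan is to run Algorithm \texttt{Diagonalize} on input $M$ and $x=-\lambda$ and track the values $d_w$ assigned to each vertex in a bottom-up sweep through $T_2=T_2(t_0,t_1,\ldots,t_p)$. After initialization, a deep leaf (child of some middle vertex) has $d=-\lambda$, each of the $t_0$ leaves adjacent to $v_2$ has $d=-1-\lambda$, every middle vertex $m_i$ has $d=1-\lambda$, and $v_2$ has $d=-1-\lambda$. For $\lambda\neq 0$ the main branch of the algorithm applies at each $m_i$, and the scaling $\sqrt{2}/\sqrt{t_i}$ is chosen precisely so that the correction $\sum_{k=1}^{t_i}(\sqrt{2}/\sqrt{t_i})^2/(-\lambda)=-2/\lambda$ is independent of $t_i$. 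A one-line calculation yields
\[
 d_{m_i} \;=\; (1-\lambda) + \frac{2}{\lambda} \;=\; \frac{-(\lambda-2)(\lambda+1)}{\lambda},
\]
which vanishes iff $\lambda\in\{-1,2\}$. For $\lambda\notin\{0,-1,2\}$, the main branch fires again at $v_2$; using the edge weights $\sqrt{5}/\sqrt{p}$ and $1/\sqrt{t_0}$ and placing everything over the common denominator $(\lambda-2)(\lambda+1)$ gives
\[
 d_{v_2} \;=\; \frac{-\lambda(\lambda-3)(\lambda+3)}{(\lambda-2)(\lambda+1)},
\]
which vanishes iff $\lambda\in\{-3,0,3\}$. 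Combining the two formulas, no zero appears on the diagonal unless $\lambda\in\{-3,-1,0,2,3\}$, proving $\DSpec(M)\subseteq\{-3,-1,0,2,3\}$ by Theorem~\ref{thm_localizacao}; moreover, for $\lambda=\pm 3$ the only zero is $d_{v_2}$, so $L(M,\pm 3)=0$.

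The remaining task will be to handle the three values $\lambda\in\{0,-1,2\}$ where the else branch of the algorithm fires. For $\lambda=0$, every deep leaf starts at $0$, so the else branch fires at each middle vertex, setting $d_{m_i}=-1/t_i$, promoting one child to $2$, and removing the edge from $m_i$ to $v_2$; afterwards $v_2$ has only its $t_0$ leaves (each with $d=-1$) as children, and a direct computation gives $d_{v_2}=-1-t_0\cdot(1/t_0)/(-1)=0$, establishing $L(M,0)=0$. For $\lambda=2$, only the $d_{m_i}$ equal $0$, so the else branch fires only at $v_2$: one of the $p$ middle-vertex zeros is converted into $2$, $d_{v_2}$ becomes nonzero, and the zero count drops by exactly one relative to $M[T_2\setminus v_2]$. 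For $\lambda=-1$, both $d_{m_i}=0$ and the $t_0$ leaves attached to $v_2$ receive $d=0$, so $v_2$ itself starts at $0$ with $p+t_0$ zero children; the else branch consumes exactly one such zero and assigns a nonzero value to $v_2$. In both cases this yields $m_{M[T_2\setminus v_2]}(\lambda)=m_M(\lambda)+1$.

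The final assertion is a single substitution: for $x=-1$ (i.e.~$\lambda=1$), the middle vertex starts at $d=0$ but its children are nonzero, so the main branch still applies and the generic formula yields $d_{m_i}=-(-1)(2)/1=2$; then
\[
 d_{v_2} \;=\; -2 - p\cdot\frac{5/p}{2} - t_0\cdot\frac{1/t_0}{-2} \;=\; -2 - \frac{5}{2} + \frac{1}{2} \;=\; -4.
\]
The main obstacle will be the bookkeeping in the three else-branch cases, since in each one must verify that exactly one zero is lost passing from $M[T_2\setminus v_2]$ to $M$; once the generic formulas for $d_{m_i}$ and $d_{v_2}$ are in place, every remaining step is elementary arithmetic.
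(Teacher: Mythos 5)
Your proof is correct and covers all four assertions of the lemma. The approach is essentially the same as the paper's—a bottom-up run of \texttt{Diagonalize}—but your route is a bit cleaner: by writing the middle-vertex value as the rational function $d_{u_i}=-(\lambda-2)(\lambda+1)/\lambda$ and the root value as $d_{v_2}=-\lambda(\lambda-3)(\lambda+3)/((\lambda-2)(\lambda+1))$, you read off the full set $\{-3,-1,0,2,3\}$ of candidate eigenvalues directly from the factored numerators, and you get $L(M,\pm 3)=0$ for free. The paper instead verifies each of $\lambda\in\{-3,-1,0,2\}$ by hand and then recovers $\lambda_{\max}=3$ from a trace computation; your closed-form $d_{v_2}$ makes that extra trace argument unnecessary. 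Your case analysis for the ``else'' values $\lambda\in\{0,-1,2\}$ (where the edge removal or zero-promotion changes the count) correctly accounts for the drop of exactly one zero when passing from $M[T_2\setminus v_2]$ to $M$, which is what the paper does as well. The only cosmetic difference is that you verify nonvanishing of \emph{all} diagonal entries (deep leaves $-\lambda$, leaves at $v_2$ $-1-\lambda$, middle vertices, root) to conclude $\DSpec(M)\subseteq\{-3,-1,0,2,3\}$, which is slightly more explicit than the paper's multiplicity-sum-plus-trace bookkeeping; both are valid.
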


Next we consider unfoldings of $S_0$. Of course, possible unfolding depend on whether $S_0$ is a 2-path $P_2$ (if $S=S_7^{(8)}$), a 3-path $P_3$ (if $S=S_7^{(9)}$) or a 4-path $P_4$ (if $S=S_7^{(7)}$). If $S_0\simeq P_3$, all possible unfoldings of $S_0$ are isomorphic to a tree of type $T_1$, that is, there exist positive integers $p$, $s_1,\ldots,s_p$, for which the unfolding is given by $T_0=T_0(s_1,\ldots,s_p)$, where we first performed a $(p-1)$-CBD of the single branch of $S_0$ at $v$, and by performing $(s_i-1)$-CBDs of the leaves connected to each of the $p$ children of $v$. If $S_0\simeq P_4$, then possible unfoldings are isomorphic to  some $T_2$, and we obtain $T_0=T_0(s_0,\ldots,s_p)$ in the same way. 
If $S_0\simeq P_2$, then we just perform $(s_0-1)$-CBDs of the leaf connected to $v$ to produce $T_0=T_0(S_0)$. Figure~\ref{q8T0} depicts each of the three possibilities for $T_0$ and the weights of the matrix $M(T_0)$.
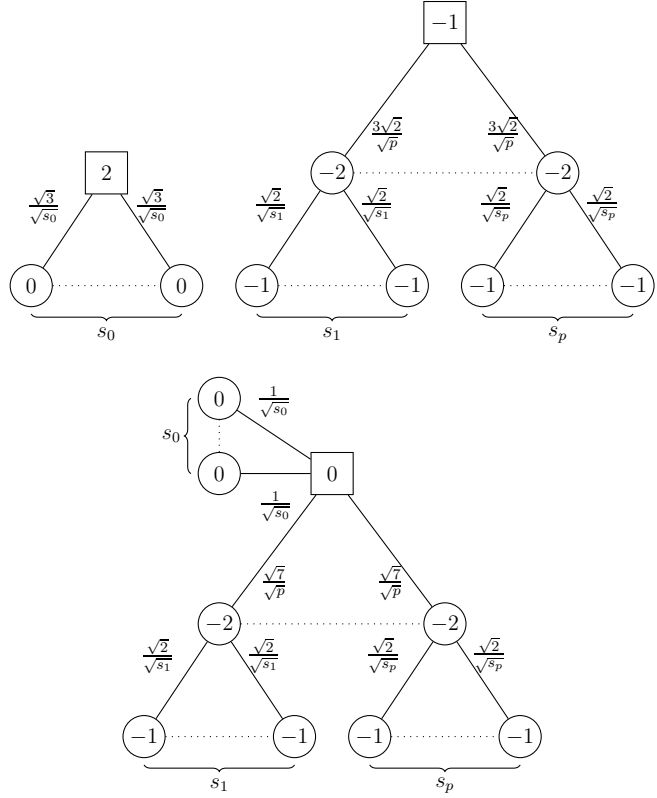
\begin{figure}
\centering
 \begin{tikzpicture}
[scale=1,auto=left,every node/.style={circle,scale=0.7}]

\path( 0.5-3,0)  node[shape=circle,draw,minimum size=0.8cm,label=above:,inner sep=0] (w1)  {$0$}
     ( 2.5-3,0)  node[shape=circle,draw,minimum size=0.8cm,label=above:,inner sep=0] (w2)  {$0$}
     ( 1.5-3,1.5)node[shape=rectangle,draw,minimum size=0.8cm,label=above:,inner sep=0] (w12) {$2$}
     
    ( 0.5,0)  node[shape=circle,draw,minimum size=0.8cm,label=above:,inner sep=0] (v1)  {$-1$}
     ( 2.5,0)  node[shape=circle,draw,minimum size=0.8cm,label=above:,inner sep=0] (v2)  {$-1$}
     ( 1.5,1.5)node[shape=circle,draw,minimum size=0.8cm,label=above:,inner sep=0] (v12) {$-2$}

     ( 3.5,0)  node[shape=circle,draw,minimum size=0.8cm,label=above:,inner sep=0] (v3)  {$-1$}
     ( 5.5,0)  node[shape=circle,draw,minimum size=0.8cm,label=above:,inner sep=0] (v4)  {$-1$}
     ( 4.5,1.5)node[shape=circle,draw,minimum size=0.8cm,label=above:,inner sep=0] (v11) {$-2$}

     ( 3,3.5)  node[shape=rectangle,draw,minimum size=0.8cm,label=above:,inner sep=0] (v13) {$-1$}

     ( 0.5+6-7.5,0-6)  node[shape=circle,draw,minimum size=0.8cm,label=above:,inner sep=0] (u1)  {$-1$}
     ( 2.5+6-7.5,0-6)  node[shape=circle,draw,minimum size=0.8cm,label=above:,inner sep=0] (u2)  {$-1$}
     ( 1.5+6-7.5,1.5-6)node[shape=circle,draw,minimum size=0.8cm,label=above:,inner sep=0] (u12) {$-2$}

     ( 3.5+6-7.5,0-6)  node[shape=circle,draw,minimum size=0.8cm,label=above:,inner sep=0] (u3)  {$-1$}
     ( 5.5+6-7.5,0-6)  node[shape=circle,draw,minimum size=0.8cm,label=above:,inner sep=0] (u4)  {$-1$}
     ( 4.5+6-7.5,1.5-6)node[shape=circle,draw,minimum size=0.8cm,label=above:,inner sep=0] (u11) {$-2$}

     ( 3+6-7.5,3.5-6)  node[shape=rectangle,draw,minimum size=0.8cm,label=above:,inner sep=0] (u13) {$0$}

     (1.5+6-7.5,3.5-6) node[shape=circle,draw,minimum size=0.8cm,label=above:,inner sep=0] (u17) {$0$}
     (1.5+6-7.5,4.5-6) node[shape=circle,draw,minimum size=0.8cm,label=above:,inner sep=0] (u18) {$0$};

\draw[dotted](w1) -- (w2) ; 
\draw[-](w1) edge node{$\frac{\sqrt{3}}{\sqrt{s_0}}$} (w12) ; 
\draw[-,pos=0.375,above](w2) edge node{$\frac{\sqrt{3}}{\sqrt{s_0}}$} (w12) ;
\draw[decoration={brace,mirror,raise=5pt},decorate]
  (0.5-3,-0.2) -- node[right=0pt,below,yshift=-0.5em] {$s_0$} (2.5-3,-0.2);

\draw[dotted](v1) -- (v2) ; 
\draw[-](v1) edge node{$\frac{\sqrt{2}}{\sqrt{s_1}}$} (v12) ; 
\draw[-,pos=0.375,above](v2) edge node{$\frac{\sqrt{2}}{\sqrt{s_1}}$} (v12) ;

\draw[decoration={brace,mirror,raise=5pt},decorate]
  (0.5,-0.2) -- node[right=0pt,below,yshift=-0.5em] {$s_1$} (2.5,-0.2);
  
\draw[dotted](v3) -- (v4) ; 
\draw[-](v3) edge node{$\frac{\sqrt{2}}{\sqrt{s_p}}$}  (v11) ; 
\draw[-,pos=0.375,above](v4) edge node{$\frac{\sqrt{2}}{\sqrt{s_p}}$}  (v11) ; 

\draw[decoration={brace,mirror,raise=5pt},decorate]
  (3.5,-0.2) -- node[right=0pt,below,yshift=-0.5em] {$s_p$} (5.5,-0.2);

\draw[dotted](v12) -- (v11) ; 
\draw[-,below](v13) edge node{$\frac{3\sqrt{2}}{\sqrt{p}}$}  (v12) ; 
\draw[-,below](v13) edge node{$\frac{3\sqrt{2}}{\sqrt{p}}$}  (v11) ;

\draw[dotted](u17) -- (u18) ; 
\draw[-,below=2] (u13) edge node{$\frac{1}{\sqrt{s_0}}$} (u17) ; 
\draw[-,above] (u13) edge node{$\frac{1}{\sqrt{s_0}}$} (u18) ; 

\draw[decoration={brace,raise=5pt},decorate]
  (1.3+6-7.5,3.5-6) -- node[right=0pt,left,xshift=-0.5em] {$s_0$} (1.3+6-7.5,4.5-6);
  
\draw[dotted](u1) -- (u2) ; 
\draw[-](u1) edge node{$\frac{\sqrt{2}}{\sqrt{s_1}}$} (u12) ; 
\draw[-,pos=0.375,above](u2) edge node{$\frac{\sqrt{2}}{\sqrt{s_1}}$} (u12) ;

\draw[decoration={brace,mirror,raise=5pt},decorate]
  (0.5+6-7.5,-0.2-6) -- node[right=0pt,below,yshift=-0.5em] {$s_1$} (2.5+6-7.5,-0.2-6);
  
\draw[dotted](u3) -- (u4) ; 
\draw[-](u3) edge node{$\frac{\sqrt{2}}{\sqrt{s_p}}$}  (u11) ; 
\draw[-,pos=0.375,above](u4) edge node{$\frac{\sqrt{2}}{\sqrt{s_p}}$}  (u11) ; 

\draw[decoration={brace,mirror,raise=5pt},decorate]
  (3.5+6-7.5,-0.2-6) -- node[right=0pt,below,yshift=-0.5em] {$s_p$} (5.5+6-7.5,-0.2-6);

\draw[dotted](u12) -- (u11) ; 
\draw[-,below](u13) edge node{$\frac{\sqrt{7}}{\sqrt{p}}$}  (u12) ; 
\draw[-,below](u13) edge node{$\frac{\sqrt{7}}{\sqrt{p}}$}  (u11) ;

\end{tikzpicture}
            \caption{A general unfolding $T_0(S_0)$ of $S_0\simeq P_2$ (top left), an unfolding $T_0(s_1,\ldots,s_p)$ of $S_0\simeq P_3$ (top right) and an unfolding $T_0(s_0,\ldots,s_p)$ of $S_0\simeq P_4$ (bottom), rooted at vertex $v$, which is depicted as a square.}
            \label{q8T0}
\end{figure}
Lemma~\ref{upperseed} gives information about the spectrum of the matrix $M(T_0)$ depending on $S_0$. Again, we refer to Appendix~\ref{appen} to see the proof of this lemma. In the statement of Lemma~\ref{upperseed} the notation $d_v(M,\lambda)$ stands for the assignment of \texttt{Diagonalize}$(M,-\lambda)$ to the vertex $v$.
\begin{lemat}\label{upperseed}
    Let $M=M(T_0)$ be the matrix in Figure~\ref{q8T0} for the tree $T_0=T_0(s_0)$ if $S_0\simeq P_2$, $T_0=T_0(s_1,\ldots,s_p)$  if $S_0\simeq P_3$ and $T_0=T_0(s_0,\ldots,s_p)$ if $S_0\simeq P_4$. The following hold:
    \begin{enumerate}
        \item[(i)] If $S_0\simeq P_2$, $\DSpec(M)\subset\{-1,0,3\}$, $L(M,\lambda)=0$ for $\lambda\in\{-1,3\}$, and $m_{M[T_0\setminus v]}(0) = m_M(0)+1$. Moreover, $d_v(M,-3)=4$, $d_v(M,1)=4$ and $d_v(M,2)=\frac{3}{2}$.
        \item[(ii)] If $S_0\simeq P_3$, $\DSpec(M)\subset\{-6,-3,-1,0,3\}$, $L(M,\lambda)=0$ for $\lambda\in\{-6,-1,3\}$, and $m_{[T_0\setminus v]}(\lambda) = m_M(\lambda)+1$ for $\lambda\in\{-3,0\}$. Moreover, $d_v(M,1)=\frac{7}{2}$ and $d_v(M,2)=\frac{3}{5}$.
        \item[(iii)] If $S_0\simeq P_4$, $\DSpec(M)\subset\{-5,-3,-1,0,3\}$, $L(M,\lambda)=0$ for $\lambda\in\{-5,-1,2\}$, and $m_{[T_0\setminus v]}(\lambda) = m_M(\lambda)+1$ for $\lambda\in\{-3,0\}$. Moreover, $d_v(M,1)=7$ and $d_v(M,2)=\frac{12}{5}$.
    \end{enumerate}
    \end{lemat}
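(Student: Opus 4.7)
The plan is to verify each of the three parts by a direct, level-by-level application of \texttt{Diagonalize}$(M(T_0),-\lambda)$ for each $\lambda$ appearing in the corresponding claim, exploiting the radial symmetry of $T_0$. In every case $T_0$ is obtained by gluing several isomorphic copies of a branch to the common root $v$, and the off-diagonal entries of $M(T_0)$ are constant along the copies of any fixed edge. Hence all vertices at the same level within the same branch type receive the same value from the algorithm, and each run reduces to a short computation that updates only one representative per level.

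For each candidate $\lambda$, I would proceed bottom-up. The leaves are initialised to $m_{ii}-\lambda$; each vertex $u$ at the next level is then assigned $m_{uu}-\lambda-\sum_{c}(m_{uc})^2/d_c$, where the sum over the $s_i$ identical children of $u$ collapses to a single term $s_i\cdot w^2/d_c$ with $w$ the common edge weight and $d_c$ the common leaf value; finally the root $v$ is processed the same way, unless all of its children have been assigned $0$, in which case lines~7--10 of the algorithm trigger, producing a negative value at $v$ and converting one of the existing zeros into a $2$. Reading off the resulting assignment at $v$ yields $L(M,\lambda)=0$ for precisely the $\lambda$'s listed in each item, and also yields the explicit values of $d_v(M,1)$ and $d_v(M,2)$ announced there.

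The identities $m_{M[T_0\setminus v]}(\lambda)=m_M(\lambda)+1$ come out as a byproduct: for any $\lambda$ with $L(M,\lambda)>0$, the algorithm is forced into the zero-pivot step at $v$, which turns exactly one zero of the subtree into a $2$ while assigning $v$ a nonzero value, so the multiplicity drops by one when passing from $M[T_0\setminus v]$ to $M$. To show that $\DSpec(M)$ is contained in the listed set (and not larger), I would sum the multiplicities obtained from the runs above and check that the total equals $|V(T_0)|$, invoking Theorem~\ref{thm_localizacao}(c); the simplicity of the minimum and maximum eigenvalues and the fact that they have $L(\cdot)=0$ follow from Theorem~\ref{thm:simpleroots}.

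The main obstacle is purely arithmetical and organisational. Case~(i) ($S_0\simeq P_2$) reduces to a weighted star and the computation collapses to a single level. In cases~(ii) and~(iii) one must handle carefully the $\lambda$'s for which two internal levels become zero simultaneously, choosing which zero is promoted by the zero-pivot step at $v$ and checking that the final count is independent of that choice. Case~(iii) is slightly more delicate because the root has two distinct types of children (a short branch of $s_0$ leaves with edge weight $1/\sqrt{s_0}$, and $p$ longer branches with edge weight $\sqrt{7}/\sqrt{p}$), so every evaluation at $v$ carries an additional term. I would treat the cases in the order $P_2$, $P_3$, $P_4$.
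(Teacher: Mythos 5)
Your approach---running \texttt{Diagonalize} bottom-up for each claimed eigenvalue, collapsing the sums over identical children of a vertex to a single representative, and then tallying multiplicities to conclude that no further eigenvalues exist---is essentially the paper's. The paper writes out the $P_2$ case in full and refers to the proofs of Lemmas~\ref{spectraT1} and~\ref{spectraT2} for the $P_3$ and $P_4$ cases, which are isomorphic trees with different weights; your plan simply makes all three computations explicit.

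One caution on the wording of your argument for the multiplicity identities: the general principle you invoke, that $L(M,\lambda)>0$ forces the zero-pivot step at $v$, is not true. Having $L(M,\lambda)>0$ only says the root is not assigned $0$; the nearest zero could sit two or more levels down, in which case the edge to $v$ is already deleted before $v$ is processed and line~7 of the algorithm never fires at $v$. The correct condition for $m_{M[T_0\setminus v]}(\lambda)=m_M(\lambda)+1$ is that $\texttt{Diagonalize}(M[T_0\setminus v],-\lambda)$ assigns $0$ to at least one \emph{child} of $v$ (equivalently, $L(M[T_0\setminus v],\lambda)=0$). Your bottom-up runs will show that this is exactly what happens for the specific values listed ($\lambda=0$ for $P_2$ and $\lambda\in\{-3,0\}$ for $P_3$, $P_4$), so the proof goes through once you actually do the arithmetic---just be aware that the shortcut as stated is not a theorem you can apply blind in another tree.
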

Now we are ready to define the matrix $M\in\mathcal{S}(T)$ with the property that $q(T)\leq 8$. Given $T\in\mathcal{T}(S)$, consider the matrix $M(T_0)$ defined for $T_0$ in Figure~\ref{q8T0} and the matrices $M(T^{(i)}_1)$ and $M(T^{(j)}_2)$ defined for $T^{(i)}_1$ and $T^{(j)}_2$ in Figure~\ref{q8T1} and Figure~\ref{q8T2}, respectively. The full matrix $M=(m_{ij})\in\mathcal{S}(T)$ is defined as follows:
\begin{enumerate}
    \item[(i)] $M[T_0]=M(T_0)$, $M[T^{(i)}_1]=M(T^{(i)}_1)$ and $M[T^{(j)}_2]=M(T^{(j)}_2)$;
    \item[(ii)] The entries on edges between the root $v$ of $T_0$ and the roots $v^{(i)}_1$ of $T^{(i)}_1$ are given by$${m_{vv^{(i)}_1} =
\begin{cases}
\sqrt{\frac{5}{q_1}}, & \textrm{ if } S_0\simeq P_2,\\
\frac{2\sqrt{2}}{\sqrt{q_1}}, & \textrm{ if } S_0\simeq P_3,\\
\sqrt{\frac{7}{q_1}}, & \textrm{ if } S_0\simeq P_4.
\end{cases}}$$
\item[(iii)] The entries $m_{vv^{(j)}_2}$ on edges between the root $v$ of $T_0
$ and the roots $v^{(j)}_2$ of $T^{(j)}_2$ may be any non-zero real number.
\end{enumerate}

This matrix $M$ is depicted in Figure~\ref{matrixM}.
\begin{figure}
$$
\begin{pNiceMatrix}[first-row][first-col]
                       & V(U)       & V(T^{(1)}_1) & \cdots &V(T^{(q_1)}_1) & V(T^{(1)}_2) & \cdots  & V(T^{(q_2)}_2)   \\[0.2cm]
        V(U)           & M_0        & A_{11}     & \cdots & A_{1q_1}    & A_{21}     & \cdots  & A_{2q_2}       \\[0.3cm]
        V(T^{(1)}_1)   & A_{11}   & M^{(1)}_1    &\cdots  & \bf{0}        & \bf{0}       & \cdots  & \bf{0}           \\[0.3cm]
        \vdots         & \vdots     & \vdots       & \ddots & \vdots        & \vdots       & \vdots  & \vdots         \\[0.2cm]
        V(T^{(q_1)}_1) & A_{1q_1} & \bf{0}       & \cdots & M^{(q_1)}_1   & \bf{0}       & \dots  & \bf{0}           \\[0.2cm]
        V(T^{(1)}_2)   & A_{21}   & \bf{0}       & \cdots & \bf{0}        & M^{(1)}_2    & \cdots  & \bf{0}           \\[0.2cm]
        \vdots~~       & \vdots     & \vdots       & \vdots & \vdots        & \vdots       & \ddots  & \vdots          \\[0.2cm]
        V(T^{(q_2)}_2) & A_{2q_2} & \bf{0}       & \cdots & \bf{0}        & \bf{0}       & \dots   & M^{(q_2)}_2      \\
    \end{pNiceMatrix} \text{, where }$$
$$A_{1i} = \begin{pNiceMatrix}[first-row][first-col]
            & v^{(i)}_1    &        &        &     \\
        v & m_{vv^{(i)}_1} & 0      & \cdots & 0   \\
            &   0    & 0      & \cdots & 0   \\
            & \vdots & \vdots & \ddots & 0   \\
            &   0    & 0      & 0      & 0   \\
    \end{pNiceMatrix}
    \text{ and }
    A_{2j} = \begin{pNiceMatrix}[first-row][first-col]
            & v^{(j)}_2    &        &        &     \\
        v & m_{vv^{(j)}_2} & 0      & \cdots & 0   \\
            &   0    & 0      & \cdots & 0   \\
            & \vdots & \vdots & \ddots & 0   \\
            &   0    & 0      & 0      & 0   \\
    \end{pNiceMatrix}
$$
\caption{Matrix $M$.}
\label{matrixM}
\end{figure}

Theorem~\ref{q8thm} gives the distinct eigenvalues of $M$. 
\begin{theorem}\label{q8thm}
    Let $T$ be an unfolding of $S^{(7)}_7$, $S^{(8)}_7$ or $S^{(9)}_7$. Let $M\in\mathcal{S}(T)$ be the matrix in the Figure~\ref{matrixM}. Then, $\DSpec(M)\subset\{\lambda_{\min},-3,-1,0,1,2,3,\lambda_{\max}\}$. In particular, $q(T)\leq8$. 
\end{theorem}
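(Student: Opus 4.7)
The plan is to analyze the algorithm \texttt{Diagonalize}$(M,-\lambda)$ applied to the matrix $M$ of Figure~\ref{matrixM} and combine the three preceding lemmas with Theorems~\ref{thm_localizacao} and~\ref{thm:simpleroots}. The crucial structural fact is that each of the subtrees $T_0$, $T_1^{(i)}$, $T_2^{(j)}$ is attached to $v$ through a single edge, so that the algorithm's diagonal entries at any non-$v$ vertex coincide with those produced by \texttt{Diagonalize} applied to the corresponding branch matrix in isolation. Lemmas~\ref{spectraT1}, \ref{spectraT2}, and~\ref{upperseed} therefore describe these assignments exhaustively, leaving only the final value $d_v(\lambda)$ to analyze.

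The first step is to observe that, for any $\lambda\notin \{-3,-1,0,1,2,3\}$, no zero is produced inside any branch during the algorithm. For $T_1^{(i)}$ and $T_2^{(j)}$ this is immediate from the containments $\DSpec(M(T_1^{(i)}))\subseteq \{-3,-1,0,1,3\}$ and $\DSpec(M(T_2^{(j)}))\subseteq \{-3,-1,0,2,3\}$. For $T_0$, Lemma~\ref{upperseed} permits the additional eigenvalues $-6$ (when $S_0\simeq P_3$) and $-5$ (when $S_0\simeq P_4$), but in both exceptional cases $L(M(T_0),\lambda)=0$, i.e., the zero appears only at $v$ and not inside $T_0\setminus v$. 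Hence, for such $\lambda$, the only vertex that can carry a zero is $v$, and $\lambda$ is an eigenvalue of $M$ if and only if $d_v(\lambda)=0$.

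Next I would invoke the Schur-complement identity
\[
d_v(\lambda)\;=\;\frac{\det(M-\lambda I)}{\det(M[T-v]-\lambda I)}.
\]
A direct computation combined with the three lemmas shows that $\DSpec(M[T-v])\subseteq \{-3,-1,0,1,2,3\}$, so every pole of $d_v$ lies in this finite set. By a standard Cauchy-interlacing argument, $d_v$ is strictly decreasing on each connected component of its domain and satisfies $d_v(\lambda)\sim -\lambda$ as $|\lambda|\to\infty$. Therefore $d_v$ has exactly one zero in the unbounded interval below the smallest true pole and exactly one in the unbounded interval above the largest; by Theorem~\ref{thm:simpleroots}, these two zeros must be $\lambda_{\min}$ and $\lambda_{\max}$. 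It remains to verify that each zero of $d_v$ lying in a bounded interval between consecutive true poles coincides with one of the non-pole good values in $\{-1,1,2\}$ rather than sitting strictly in the interior.

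The main obstacle is this last verification, which is carried out as a case-by-case sign analysis split according to whether $S_0\simeq P_2$, $P_3$, or $P_4$. It uses the explicit tabulated values of $d_v(M(T_0),\lambda)$, $d_{v_1^{(i)}}(\lambda)$, and $d_{v_2^{(j)}}(\lambda)$ at $\lambda\in\{-3,-1,1,2,3\}$ provided by the three preceding lemmas. The specific normalizations $m_{v v_1^{(i)}}\in \{\sqrt{5/q_1},\,2\sqrt{2}/\sqrt{q_1},\,\sqrt{7/q_1}\}$ chosen in the construction of $M$ ensure that the contribution $\sum_i (m_{v v_1^{(i)}})^2/d_{v_1^{(i)}}(\lambda)$ cancels precisely against $d_v(M(T_0),\lambda)$ at the critical non-pole values, forcing these to be the zeros of $d_v$ in each bounded interval; the arbitrary weights $m_{v v_2^{(j)}}$ contribute further terms whose sign can be controlled using the data from Lemma~\ref{spectraT2}. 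Putting these pieces together yields $\DSpec(M)\subseteq \{\lambda_{\min},-3,-1,0,1,2,3,\lambda_{\max}\}$ and hence $q(T)\leq 8$.
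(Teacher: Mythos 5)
Your argument is a genuinely different route from the paper's. The paper proves a Claim giving exact formulas $m_M(\lambda)$ for each $\lambda\in\{-3,-1,0,1,2,3\}$ in terms of the branch multiplicities $m_0,m_1,m_2$, sums these to get $n-2$, and concludes that at most two extra eigenvalues remain, which Theorem~\ref{thm:simpleroots} forces to be $\lambda_{\min},\lambda_{\max}$. You instead treat $d_v(\lambda)$ as a rational function, invoke the partial-fraction form $d_v(\lambda)=m_{vv}-\lambda-\sum_k b_k/(\mu_k-\lambda)$ with $b_k\ge 0$ (so $d_v$ is strictly decreasing between consecutive poles), and argue that the poles lie in $\{-3,-1,0,1,2,3\}$ while the two zeros in the unbounded intervals are $\lambda_{\min},\lambda_{\max}$. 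That framing is valid and arguably cleaner conceptually, since it does not require tracking every multiplicity exactly, only the location of zeros of a single scalar function.

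However, there is a genuine gap, and you yourself flag it: the claim that ``the normalizations $m_{vv_1^{(i)}}$ ensure cancellation at the critical non-pole values, forcing these to be the zeros of $d_v$ in each bounded interval'' is not verified, and it is precisely the content of the theorem. There are really several separate things to check here, and they behave differently across the three cases $S_0\simeq P_2,P_3,P_4$: (a) the spectrum of the block $M[T_0\setminus v]$ must lie in $\{-3,-1,0,1,2,3\}$ (Lemma~\ref{upperseed} does \emph{not} state this directly; for $P_3$ and $P_4$ one needs to compute the spectrum of the resulting forest of weighted stars), so the pole set is known; (b) for each of $\lambda\in\{-1,2\}$ one must actually evaluate $d_v(T,\lambda)$ via the recursion $d_v(T)=d_v(T_0)-\sum_i \delta_{v_1^{(i)}}/d_{v_1^{(i)}}-\sum_j \delta_{v_2^{(j)}}/d_{v_2^{(j)}}$ and show it vanishes; and (c) one must confirm that each bounded interval between consecutive poles contains such a value. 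Without (b) the interlacing skeleton does not close, since nothing in the lemmas alone implies that the zero in, say, $(0,3)$ is not strictly between $1$ and $2$. Note also that the phrase ``forcing these to be the zeros'' overstates matters: for instance with $S_0\simeq P_2$ one computes $d_v(T,1)=4+\sum_j (m_{vv_2^{(j)}})^2/4>0$, so $1$ is a regular point of $d_v$, not a zero; only $-1$ and $2$ are the bounded-interval zeros, and the set of good values that actually vanish depends on the case. So the approach is sound in outline but needs the explicit $d_v$ evaluations for each case, which are the substantive part of the proof and which the paper carries out inside its Claim.
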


\begin{proof}
    We write the proof for $S_0\simeq P_2$, it is analogous for $S_0\simeq P_3$ and for $S_0\simeq P_4$. We first state some additional notation, namely $m_0(\lambda)=m_{M(T_0)}(\lambda)$, $m_1(\lambda)=\sum_{i=1}^{q_1}m_{M(T^{(i)}_1)}(\lambda)$ and $m_2(\lambda)=\sum_{j=1}^{q_2}m_{M(T^{(j)}_2)}(\lambda)$ for $\lambda\in\Lambda=\{-3,-1,0,1,2,3\}$. 
    
Since the sum of the multiplicities of the eigenvalues of a matrix is equal to its dimension
and since the spectra of the different submatrices of $M$ are described in Lemmas~\ref{spectraT1},~\ref{spectraT2}, and~\ref{upperseed}, we have, 
    \begin{align*}
        n_0 &= m_{0}(-1)+m_{0}(0)+m_{0}(3),\\
        n_1 &= m_{1}(-3)+m_{1}(-1)+m_{1}(0)+m_{1}(1)+m_{1}(3),\\
        n_2 &= m_{2}(-3)+m_{2}(-1)+m_{2}(0)+m_{2}(2)+m_{2}(3),
    \end{align*}
    where $n_0=|V(T_0)|$, $n_1=\sum_{i=1}^{q_1}|V(T^{(i)}_1)|$ and $n_2=\sum_{j=1}^{q_2}|V(T^{(j)}_2)|$.
    
    Then, we have
    \begin{align}\label{eqmulti}
n=n_0+n_1+n_2&=m_0(-1)+m_0(0)+m_0(3)+\nonumber\\
    &+m_{1}(0)+m_{1}(-1)+m_{1}(1)+m_{1}(-3)+m_{1}(3)+\nonumber\\&+m_{2}(0)+m_{2}(-1)+m_{2}(2)+m_{2}(-3)+m_{2}(3)\nonumber\\
    &\stackrel{(*)}{=}2+m_0(0)+\nonumber\\
    &+m_{1}(0)+m_{1}(-1)+m_{1}(1)+m_{1}(-3)+m_{1}(3)+\nonumber\\
    &+m_{2}(0)+m_{2}(-1)+m_{2}(2)+m_{2}(-3)+m_{2}(3).
    \end{align}
    Equation (*) holds owing to the fact that any $\lambda$ such that $L(M(T_0),\lambda)=0$ must have multiplicity 1.

    The claim below describes how $m_M(\lambda)$ relates with $m_0(\lambda)$,$m_1(\lambda)$, and $m_2(\lambda)$ when $S_0\simeq P_2$. 
    %For $S_0\simeq P_3$ or $S_0\simeq P_4$, we provide the analogues of~\eqref{eqmulti} and Claim~\ref{multiplicityU0} in the Appendix~\ref{analogousclaim}. 
    \begin{claim}\label{multiplicityU0}
    If $S_0\simeq P_2$ the following hold:
    \begin{enumerate}
        \item [(i)] $m_M(0) = m_{0}(0)+m_{1}(0)+m_{2}(0)$,
        \item [(ii)] $m_M(-1) = m_{1}(-1)+m_{2}(-1)+1$,
        \item [(iii)] $m_M(1) = m_{1}(1)$,
        \item [(iv)] $m_M(2) = m_{2}(2)+1$,
        \item [(v)] $m_M(-3) = m_{1}(-3)+m_{2}(-3)-1$,
        \item [(vi)] $m_M(3) = m_{1}(3)+m_{2}(3)-1$.     
    \end{enumerate}
    \end{claim}

    \begin{proof}

    % In order to prove the Claim~\ref{multiplicityU0} we will count the number of zeros assignments in \texttt{Diagonalize}$(M,-\lambda)$ to compare $m_M(\lambda)$ with $m_{i}(\lambda)$, for $i\in\{0,1,2\}$, for $\lambda\in\{-3,-1,0,1,2,3\}$.

For each $\lambda\in\{-3,-1,0,1,2,3\}$, we know that $m_M(\lambda)$ is the number of zeros in an application of \texttt{Diagonalize}$(M,-\lambda)$. Similarly, $m_0(\lambda)$ is the number of zeros that appear in \texttt{Diagonalize}$(M[T_0],-\lambda)$, while 
$m_1(\lambda)$ is the sum of the number of zeros that appear in \texttt{Diagonalize}$(M[T_1^{(i)}],-\lambda)$ (for all $i$), and $m_2(\lambda)$ is the sum of the number of zeros that appear in \texttt{Diagonalize}$(M[T_2^{(j)}],-\lambda)$ (for all $j$). 
The only vertices $w$ for which the value of $d_w$ may be different when algorithm \texttt{Diagonalize}$(M,-\lambda)$ is applied to the entire tree rather than its branches are:
\begin{itemize}
\item[(a)] $w=v$. For the value to be different, one possibility is that no child of $v$ in $T_0$ is assigned value 0 by \texttt{Diagonalize}$(M[T_0],-\lambda)$ and at least one of the roots $v_1^{(i)}$ or $v_2^{(j)}$ is assigned value 0 in the corresponding branch. The other possibility is that none of these values are 0 and 
\begin{equation}\label{main:eq}d_v(T)=d_v(T_0)-\sum_{i=1}^{q_1} \frac{\delta_{v_1^{(i)}}
}{d_{v_1^{(i)}}} -\sum_{j=2}^{q_2} \frac{\delta_{v_2^{(j)}
}}{d_{v_2^{(j)}}},
\end{equation}
where $\delta_{v_1^{(i)}}$ is equal to 0 if $v_1^{(i)}$ has a child that was assigned value 0, and $\delta_{v_1^{(i)}}=(m_{vv_1^{(i)}})^2$ otherwise. Similarly,
$\delta_{v_2^{(j)}}=0$ if $v_2^{(j)}$ has a child that was assigned value 0, and
$\delta_{v_2^{(j)}}=(m_{vv_2^{(j)}})^2$ otherwise.

\item[(b)] $w=v_1^{(i)}$, \texttt{Diagonalize}$(M[T_1^{(i)}],-\lambda)$ assigns 0 to $v_1^{(i)}$, and this value is replaced by 2 when processing $v$.

\item[(c)] $w=v_2^{(j)}$, \texttt{Diagonalize}$(M[T_2^{(j)}],-\lambda)$ assigns 0 to $v_2^{(j)}$, and this value is replaced by 2 when processing $v$.

\end{itemize}

To illustrate this approach, we consider the cases $\lambda=0$, $\lambda=2$ and $\lambda=-3$. For all other values of $\lambda$, we may apply the same strategy. 

First consider $\lambda=0$. By Lemma~\ref{spectraT1}, it is an eigenvalue of $T_1^{(i)}$ for every $i$, and $d_{v^{(i)}_1}=0$. The same happens for $T_2^{(j)}$ for every $j$ by Lemma~\ref{spectraT2}. Finally, by the definition of $T_0$, each child of $v$ is initialized with 0 when applying \texttt{Diagonalize}$(M[T_0],0)$. As a consequence, when we process $v$ in \texttt{Diagonalize}$(M,0)$ one of its children is assigned value $2$, and it will be assigned a negative value. We may suppose that the child lies in $T_0$, which is precisely what would happen for $v$ in \texttt{Diagonalize}$(M[T_0],0)$. As a consequence
$$m_M(0)=m_{0}(0)+m_{1}(0)+m_{2}(0).$$

Consider $\lambda=2$. By Lemma~\ref{spectraT1}, it is never an eigenvalue of $T_1^{(i)}$. Moreover, all roots $v_1^{(i)}$ are assigned $d_{v_1^{(i)}}=10/3$. By Lemma~\ref{spectraT2}, we know that $m_{M[T^{(j)}_2\setminus v^{(j)}_2]}(2)=m_{M[T^{(j)}_2]}(2)+1$ for every $j$. This means that every root $
v^{(j)}_2$ had a child that was assigned value 0 by \texttt{Diagonalize}$(M[T_2^{(j)}],-2)$, so that the same happens for \texttt{Diagonalize}$(M,-2)$. In particular, when processing $v^{(j)}_2$, the algorithm deletes the edge to its parent $v$. Finally, by Lemma~\ref{upperseed}, we have that $\lambda=2$ is not an eigenvalue of $T_0$ and $d_v(T_0) = 3/2$. Equation~\eqref{main:eq} leads to
$$d_v(T) =\frac{3}{2}-\sum_{\ell=1}^{q_1}\frac{5/q_1}{10/3}=\frac{3}{2}-\frac{15}{10}=0.$$
This implies that $m_M(2)=m_{2}(2)+1.$

Next, consider $\lambda=-3$. Lemmas~\ref{spectraT1} and~\ref{spectraT2} tell us that that it is an eigenvalue of all $T_1^{(i)}$ and $T_2^{(j)}$, and the algorithm assigns 0 to all $v_1^{(i)}$ and $v_2^{(j)}$. The children of $v$ in $T_0$ are not assigned value 0, and by Lemma~\ref{upperseed}, $v$ is not assigned value 0 in \texttt{Diagonalize}$(M[T_0],3)$ When processing $v$ in \texttt{Diagonalize}$(M,3)$, one of the $v_1^{(i)}$ or $v_2^{(j)}$ is assigned value 2, and $d_v$ is assigned a negative number. So $m_M(-3)=m_{1}(-3)+m_{2}(-3)-1.$

    \end{proof}

Coming back to the proof of Theorem~\ref{q8thm}, Claim~\ref{multiplicityU0} leads to 
\begin{eqnarray*}
\sum_{\lambda\in\Lambda}m_M(\lambda)&=&m_0(0)+\left(m_1(0)+m_1(-1)+m_1(1)+m_1(-3)+m_1(3)\right)\\&&+\left(m_2(0)+m_2(-1)+m_2(2)+m_2(-3)+m_2(3)\right)\\
&\stackrel{\eqref{eqmulti}}{=}&n-2.
\end{eqnarray*}
So, in addition the eigenvalues in $\Lambda$ we can have at most two other eigenvalues. 
Moreover, by Theorem~\ref{thm:simpleroots}, we know that $-3$ and $3$ cannot be $\lambda_{\min}$ and $\lambda_{\max}$, since \texttt{Diagonalize}$(M,-3)$ and \texttt{Diagonalize}$(M,3)$ assign 0 to vertices other than the root $v$. As a consequence, the missing eigenvalues must be $\lambda_{\min}$ and $\lambda_{\max}$, proving our result.
\end{proof}

%  \begin{theorem}\label{badseedS69}
%      $q(T)\leq8$ for $T\in\mathcal{T}(S_6^{(12)})$.
%  \end{theorem}

\section*{Acknowledgments}
The authors thank CAPES for the partial support under project MATH-AMSUD 88881.694479/2022-01. They also acknowledge partial support by CNPq (Proj.\ 408180/2023-4). L.\ E.\ Allem was partially supported by FAPERGS 21/2551-
0002053-9. C.~Hoppen was partially supported of CNPq (Proj.\ 315132/2021-3). CAPES is Coordena\c{c}\~{a}o de Aperfei\c{c}oamento de Pessoal de N\'{i}vel Superior. CNPq is Conselho Nacional de Desenvolvimento Cient\'{i}fico e Tecnol\'{o}gico. FAPERGS is Funda\c{c}\~{a}o de Amparo \`{a} Pesquisa do Estado do Rio Grande do Sul.

\bibliographystyle{amsplain}

\providecommand{\bysame}{\leavevmode\hbox to3em{\hrulefill}\thinspace}
\providecommand{\MR}{\relax\ifhmode\unskip\space\fi MR }
% \MRhref is called by the amsart/book/proc definition of \MR.
\providecommand{\MRhref}[2]{%
  \href{http://www.ams.org/mathscinet-getitem?mr=#1}{#2}
}
\providecommand{\href}[2]{#2}

% \bibliography{mybibliography}

\appendix

\section{Proof of Lemmas~\ref{spectraT1},~\ref{spectraT2} and \ref{upperseed}}\label{appen}

\begin{proof}[Proof of Lemma~\ref{spectraT1}]

Let $M$ be the matrix in Figure~\ref{q8T1} for the tree $T_1 = T_1(t_1,\ldots,t_p)$.

We apply \texttt{Diagonalize}$(M,0)$. Every leaf $w$ is assigned $d_w = 0$. So, we process the parents $u_1,\ldots,u_p$ of the leaves. Since each $u_i$ has a child $w_i$ with $d_{w_i} = 0$, $d_{u_i}$ is assigned a negative value and $d_{w_i}=2$. The edge between $u_i$ and its parent $v_1$ is removed. Then, the initial value $d_{v_1}=0$ remains unchanged. In the end, the number of zeros in the final assignment is $1+\sum_{i=1}^{p}(t_i-1)=|V(T_1)|-2p$. By Theorem~\ref{thm_localizacao}(c) the multiplicity of $0$ is $|V(T_1)|-2p$ with $L(M,0)=0$.

Next we apply \texttt{Diagonalize}$(M,1)$. Each leaf $w$ receives $d_w=1$. We process the parents $u_1,\ldots,u_p$ of the leaves. For each $i$, $d_{u_i}=1-\sum_{i=1}^{t_i}\frac{1/t_i}{1}=1-1=0$. Now,  $v_1$ has a child $u$ with $d_u=0$, so when processing $v_1$, $d_{v_1}$ receives a negative assignment and $d_u=2$. So, in the end, we have $p-1$ zeros. By Theorem~\ref{thm_localizacao}(c) the multiplicity of $-1$ is $p-1$. Of course, in $M[T_1\setminus v_1]$, we would have exactly $p$ zero assignments, so $m_{M[T_1\setminus v_1]}(-1) = m_M(-1)+1$.

Consider \texttt{Diagonalize}$(M,-1)$. Each leaf $w$ receives $d_w=-1$. We process the parents $u_1,\ldots,u_p$ of the leaves. For each $i$, $d_{u_i}=-1-\sum_{i=1}^{t_i}\frac{1/t_i}{-1}=-1+1=0$. Now,  $v_1$ has a child $u$ with $d_u=0$, so when processing $v_1$, $d_{v_1}$ receives a negative assignment and $d_u=2$. So, in the end, we have $p-1$ zeros. By Theorem~\ref{thm_localizacao}(c) the multiplicity of $1$ is $p-1$. Of course, in $M[T_1\setminus v_1]$ we would have exactly $p$ zero assignments, so $m_{M[T_1\setminus v_1]}(1) = m_M(1)+1$.

Now we apply \texttt{Diagonalize}$(M,3)$. Each leaf $w$ receives $d_w=3$. We process the parents $u_1,\ldots,u_p$ of the leaves. For each $i$, $d_{u_i}=3-\sum_{i=1}^{t_i}\frac{1/t_i}{3}=3-\frac{1}{3}=\frac{8}{3}$. Now, when processing $v_1$ we get
$$d_{v_1} = 3 - \sum_{i=1}^p\frac{8/p}{8/3}=0.$$
So $d_{v_1}=0$, $m_M(-3)=1$ and $L(M,-3)=0$.

Using the trace of the matrix, we know that
$$0 = -3 + (-1)(p-1) + 0(|V(T_1)|-2p)+ (+1)(p-1) +\lambda_{\text{max}},$$
so we have that $\lambda_{\text{max}}=3$. And, by Lemma~\ref{thm:simpleroots} we know that $L(M,3)=0$.

Finally, when performing \texttt{Diagonalize}$(M,-2)$, the leaves are assigned $-2$. The parent $z$ of each a set of $t_z$ leaves, $z\in\{1,\ldots,p\}$, gets
        $$-2-\sum_{\ell=1}^{t_z}\frac{1/t_z}{-2}=-2+\frac{1}{2}=-\frac{3}{2}.$$
        The root $v_1$ is assigned
        $$d_{v_1}=-2-\sum_{\ell=1}^{p_i}\frac{8/p_i}{-3/2}=-2+\frac{16}{3}=\frac{10}{3}.$$

\end{proof}

\begin{proof}[Proof of Lemma~\ref{spectraT2}]

Let $M$ be the matrix on the Figure~\ref{q8T2} for the tree $T_2 = T_2(t_0,\ldots,t_p)$. Recall that the root is $v_2$ and its non-leaf children are $u_1,\ldots,u_p$.

We apply \texttt{Diagonalize}$(M,0)$. Every leaf $w$ connected to $u_1,\ldots,u_p$ is assigned $d_w = 0$. Every leaf $w'$ connected to $v_2$ is assigned $d_{w'} = -1$. Since each $u_i$ has a child $w_i$ with $d_{w_i} = 0$, $d_{u_i}$ is assigned a negative value, $d_{w_i}=2$, and the edge between $u_i$ and $v_2$ is removed. 
In the final step, $v_2$ get the following assignment
$$d_{v_2}=-1-\sum_{i=1}^{t_0}\frac{1/t_0}{-1}=-1+1=0.$$
In the end, we have $1+\sum_{i=1}^{p}(t_i-1)=|V(T_2)|-2p-t_0$ zero assignments. By Theorem~\ref{thm_localizacao}(c) the multiplicity of $0$ is $|T_1|-2p$ with $L(M,0)=0$.

Next we apply \texttt{Diagonalize}$(M,1)$. Each leaf $w$ connected to $u_1,\ldots,u_p$ is assigned $d_w=1$. Every leaf $w'$ connected to $v_2$ will is assigned $d_{w'} = 0$. For each $i$, $d_{u_i}=1-\sum_{\ell=1}^{t_i}\frac{1/t_i}{1}=1-1=0$. Before processing $v_2$, all of its children have $d_u=0$. So, upon processing $v_2$, one of the children is assigned $2$ and $v_2$ is assigned a negative value.
In the end, we have $p-1+t_0$ zero assignments. So, by Theorem~\ref{thm_localizacao}(c), $m_M(-1)=p-1+t_0$. Of course, for $M[T_2\setminus v_2]$ we have exactly $p+t_0$ zero assignments, so $m_{[T_2\setminus v_2]}(-1) = m_M(-1)+1$.

Now we apply \texttt{Diagonalize}$(M,-2)$. Each leaf $w$ connected to $u_1,\ldots,u_p$ is assigned $d_w=-2$. Every leaf $w'$ connected to $v_2$ is assigned $d_{w'} = -3$. For each $i$, $d_{u_i}=-1-\sum_{\ell=1}^{t_i}\frac{2/t_i}{-2}=-1+1=0$.
Now, $v_2$ has a child $u$ with $d_u=0$, so when processing $v_2$, $d_{v_2}$ becomes negative and $d_u=2$. So, in the end, we have $p-1$ zero assignments. So, by Theorem~\ref{thm_localizacao}(c), $m_M(2)=p-1$. Of course, in $M[T_2\setminus v_2]$ we would have exactly $p$ zero assignments, so $m_{M[T_2\setminus v_2]}(1) = m_M(1)+1$.

Now we apply \texttt{Diagonalize}$(M,3)$. Each leaf $w$ connected to $u_1,\ldots,u_p$ is assigned $d_w=3$. Every leaf $w'$ connected to $v_2$ is assigned $d_{w'} = 2$. 
For each $i$, $d_{u_i}=1+3-\sum_{\ell=1}^{t_i}\frac{2/t_i}{3}=4-\frac{2}{3}=\frac{10}{3}$. Now, when processing $v_2$ we will have
$$d_{v_2} = -1+3 - \sum_{i=1}^p\frac{5/p}{10/3}- \sum_{i=1}^{t_0}\frac{1/t_0}{2}=2-\frac{3}{2}-\frac{1}{2}=0.$$
So we have one final $d_{v_2}=0$ assignment, so $m_M(-3)=1$ and $L(M,-3)=0$.

Using the trace of the matrix, we know that
$$p-(t_0+1) = -3 + (-1)(p-1+t_0) + 0(|V(T_2)|-2p-t_0)+ 2(p-1) +\lambda_{\text{max}},$$
which leads to
$$3 = -p+(t_0+1)-(p-1)-t_0 + (p-1) + (p-1) +\lambda_{\text{max}},$$
so we have that $\lambda_{\text{max}}=3$. And, by Lemma~\ref{thm:simpleroots} we know that $m_M(3)=1$ and $L(M,3)=0$.

Finally, when performing \texttt{Diagonalize}$(M,-1)$, the leaves connected to $u_1,\ldots,u_p$ are assigned $-1$ and the leaves connected to $v_2$ are assigned -2. Each $u_i$ is assigned
        $$0-\sum_{\ell=1}^{t_i}\frac{2/t_i}{-1}=2.$$
        The root $v_2$ is assigned
        $$d_{v_2}=-2-\sum_{\ell=1}^{t_0}\frac{1/t_0}{-2}-\sum_{\ell=1}^{p}\frac{5/p}{2}=-2+\frac{1}{2}-\frac{5}{2}=-4.$$

\end{proof}

\begin{proof}[Proof of Lemma~\ref{upperseed}]

We focus on the case $S_0\simeq P_2$. If $S_0\simeq P_3$ we refer the reader to the proof of Lemma~\ref{spectraT1}, which deals with an isomorphic tree with different weights. 
%Indeed, one can see that for the values $\lambda=-6,-3,-1,0,3$ the computation of \texttt{Diagonalize}$(M,-\lambda)$ is exactly the same for the values $\lambda^\ast=-3,-1,0,1,3$ in \texttt{Diagonalize}$(M,-\lambda^\ast)$, respectively. 
For $S_0\simeq P_4$ we refer the reader to the proof of Lemma~\ref{spectraT2}, which also deals with an isomorphic tree with different weights. %Indeed, one can see that for the values $\lambda=-5,-3,-1,0,3$ the computation of \texttt{Diagonalize}$(M,-\lambda)$ is exactly the same for the values $\lambda^\ast=-3,-1,0,2,3$ in \texttt{Diagonalize}$(M,-\lambda^\ast)$, respectively.

For the proof for $S_0\simeq P_2$, let $M$ be the matrix on the Figure~\ref{upperseed} (on the left) for the tree $T_0 = T_0(s_0)$.

We apply \texttt{Diagonalize}$(M,0)$. Every child $w$ of $v$ is initially assigned $d_w = 0$. The value of one of them is replaced by 2 when processing $v$, and $v$ is assigned a negative value. By Theorem~\ref{thm_localizacao}(c) the multiplicity of $0$ is $s_0-1=|V(T_0)|-2$. Of course, in $M[T_0\setminus v]$ we would have exactly $s_0$ zero assignments, so $m_{M[T_0\setminus v]}(0) = m_M(0)+1$.

Next we apply \texttt{Diagonalize}$(M,1)$. Every child $w$ of $v$ is assigned $d_w=1$. When processing $v$ we get
$$d_{v} = 2 + 1 - \sum_{i=1}^{s_0}\frac{3/s_0}{1}=3-3=0.$$
So we have one final $d_{v}=0$ assignment, so $m_M(-1)=1$ and $L(M,-1)=0$.

Using the trace of the matrix, we know that
$$2+0 \cdot s_0 = -1 + 0\cdot (s_0-1) +\lambda_{\text{max}},$$
which leads to
$$\lambda_{\text{max}} =3,$$ 
and, by Lemma~\ref{thm:simpleroots} we know that $m_M(3)=1$ and $L(M,3)=0$.

Finally, \texttt{Diagonalize}$(M,3)$ assigns $d_w=3$ to each leaf $w$. Then, when processing $v$ we get
$$d_{v} = 5 - \sum_{i=1}^{s_0}\frac{3/s_0}{3}=5-1=4.$$
\texttt{Diagonalize}$(M,-1)$ assigns $d_w=-1$ to each leaf $w$. Then, when processing $v$ get 
$$d_{v} = 1 - \sum_{i=1}^{s_0}\frac{3/s_0}{-1}=1+3=4.$$
\texttt{Diagonalize}$(M,-2)$ assigns $d_w=-2$ for each leaf $w$. Then, when processing $v$ we get
$$d_{v} = 0 - \sum_{i=1}^{s_0}\frac{3/s_0}{-2}=\frac{3}{2}.$$
\end{proof}

\end{document}